\documentclass[11pt,a4paper]{article}
\usepackage[T1]{fontenc}
\usepackage{lmodern}
\usepackage[margin=28mm]{geometry}
\usepackage{amsmath,amssymb,amsthm,mathtools}
\usepackage{microtype}
\usepackage{enumitem}
\usepackage[hidelinks]{hyperref}
\usepackage{bookmark}
\numberwithin{equation}{section}
\newtheorem{theorem}{Theorem}[section]
\newtheorem{proposition}[theorem]{Proposition}
\newtheorem{lemma}[theorem]{Lemma}

\theoremstyle{remark}
\newtheorem{remark}[theorem]{Remark}
\theoremstyle{definition}
\newtheorem{condition}[theorem]{Condition}
\newtheorem{definition}[theorem]{Definition}
\newcommand{\R}{\mathbb R}
\newcommand{\E}{\mathbb E}
\newcommand{\PP}{\mathbb P}
\newcommand{\F}{\mathcal F}
\newcommand{\dd}{\mathrm d}
\newcommand{\norm}[1]{\left\|#1\right\|}
\newcommand{\np}[2]{\left\|#1\right\|_{#2}}
\newcommand{\e}{\varepsilon}
\setlist{topsep=4pt,itemsep=3pt}
\allowdisplaybreaks[2]

\hypersetup{pdftitle={Local Linearization and Exact Oscillations for the Nonlinear  
SHE with Spatially Rough Noise},pdfauthor={Bin Qian; Mengxin Tong; Ran Wang; Yimin Xiao}}

\title{Local Linearization and Exact Oscillations for the Nonlinear \\
Stochastic Heat Equation with Spatially Rough Noise}

\author{
Bin Qian$^{1}$, Mengxin Tong$^{2}$, Ran Wang$^{2}$, and Yimin Xiao$^{3}$\\[5pt]
\small $^{1}$Department of Mathematics and Statistics,\\
\small Suzhou University of Technology, Changshu, Jiangsu 215500, China\\[3pt]
\small $^{2}$School of Mathematics and Statistics,
Wuhan University, Wuhan 430072, China\\[3pt]
\small $^{3}$Department of Statistics and Probability,\\
\small Michigan State University, East Lansing, MI 48824, USA\\[5pt]
\small\textit{E-mail:}
\href{mailto:binqiancn@126.com}{\texttt{binqiancn@126.com}},
\href{mailto:mengxintong@whu.edu.cn}{\texttt{mengxintong@whu.edu.cn}},\\
\small
\href{mailto:rwang@whu.edu.cn}{\texttt{rwang@whu.edu.cn}},
\href{mailto:xiaoy@msu.edu}{\texttt{xiaoy@msu.edu}}
}

\date{}

\begin{document}
\maketitle

\begin{abstract}
Consider the one-dimensional nonlinear stochastic heat equation 
$$
\frac{\partial u(t,x)}{\partial t}
=
\frac{\partial^2 u(t,x)}{\partial x^2}
+
\sigma(u(t,x))\dot W(t,x),
\qquad t>0,\ x\in\mathbb R,
$$
where $\dot W$ is a Gaussian noise that is white in time and fractional in space with Hurst parameter $H\in(1/4,1/2)$. Hu et al. ({\it Ann. Probab.} {\bf 45} (2017), 4561--4616) established existence and uniqueness of the solution 
under the assumptions that $\sigma$ is differentiable,
both $\sigma$ and $\sigma'$ are Lipschitz continuous,
and $\sigma(0)=0$. Hu and Wang ({\it Ann. Inst. Henri Poincar\'e Probab. Stat.} {\bf 58} (2022), 379--423) subsequently removed the technical condition $\sigma(0)=0$ by developing the well-posedness theory in a suitable weighted space associated with the    weight $\lambda(x)= c_H (1 + |x|^2)^{H-1}$. 

In this weighted framework, which allows $\sigma(0)\ne0$, we establish temporal and spatial local linearizations:   small increments of the nonlinear solution are approximated by the corresponding increments of the solution to the additive
equation, multiplied by the local coefficient $\sigma(u(t,x))$.
The main difficulty lies in handling the nonlocal fractional-difference structure induced by the rough spatial noise. We address this difficulty using weighted heat-kernel increment
estimates.  These approximations yield exact L\'evy-type uniform moduli of continuity in both time and space, together with Khinchin and Chung laws of the iterated logarithm.
\end{abstract}

\noindent\textbf{Keywords:}  Stochastic heat equation; rough  noise;
local linearization; exact modulus of continuity; laws of the iterated logarithm.

\noindent\textbf{2020 Mathematics Subject Classification:} 60H15, 60G17,
60G22, 60F15.

\section{Introduction}\label{sec:intro}

Consider  the one-dimensional nonlinear stochastic heat equation (SHE, for short)
\begin{equation}\label{eq:SHE}
 \partial_tu(t,x)=\partial_x^2u(t,x)+\sigma(u(t,x))\dot W(t,x),
 \qquad t>0,\ x\in\R,
\end{equation}
with initial data $u(0,\cdot)=u_0$, where $\dot W$ is white in time
and fractional in space with $H\in(1/4,1/2)$. More precisely, $W$ is
a centered Gaussian field with covariance
$$
 \E[W(t,x)W(s,y)]
 =\tfrac12(s\wedge t)(|x|^{2H}+|y|^{2H}-|x-y|^{2H}),
$$
and $\dot W=\partial_t\partial_xW$ is understood as a distribution.
Its spatial spectral measure is
\begin{equation}\label{eq:mu}
 \mu(\dd\xi)=c_{1,1}|\xi|^{1-2H}\dd\xi,
\end{equation}
where
\begin{equation}\label{eq:c11}
 c_{1,1}=\frac1{2\pi}\Gamma(2H+1)\sin(\pi H).
\end{equation} 
 The spatial covariance $\Lambda(x-y)$ can be formally expressed as
$$\Lambda(x-y)=H(2H-1) |x-y|^{2H-2}.$$
For $H\in(1/4,1/2)$, this formal covariance is neither locally
integrable nor nonnegative, and hence does not define a nonnegative
covariance kernel in the classical sense. Consequently, the standard arguments for spatially homogeneous noise
in~\cite{DaPrato2014,Dalang1999,DalangQuerSardanyons2011} do not apply
directly in this rough-noise setting.

Stochastic partial differential equations  driven by spatially rough noise with \(H\in(1/4,1/2)\) have been extensively studied; see \cite{hu19} for a survey. For affine coefficients \(\sigma(x)=ax+b\), Balan et al.~\cite{BJQ2015} established existence and uniqueness of mild solutions to \eqref{eq:SHE} by Fourier-analytic methods, and subsequently proved H\"older continuity in \cite{BJQ2016}.   Hu et al.~\cite{HHLNT2017} established well-posedness   under the assumptions that $\sigma$ is differentiable,
both $\sigma$ and $\sigma'$ are Lipschitz continuous,
and $\sigma(0)=0$. Liu and Mao~\cite{LM2022} studied well-posedness and intermittency under similar assumptions for the SHE with nonlocal fractional differential operators.  Hu and Wang~\cite{HW2022} subsequently removed the assumption
$\sigma(0)=0$ by establishing well-posedness in a suitable weighted space associated with the   weight
$\lambda(x)=c_H(1+|x|^2)^{H-1}$; see Proposition~\ref{prop:solution} below.
Related well-posedness results for stochastic wave equations driven by spatially fractional noise were obtained in \cite{LHW2022,SongSX2020}.

For space--time white noise, corresponding to $H=\frac{1}{2}$,
Khoshnevisan et al.~\cite{KSXZ2014} established a temporal local
linearization of the form
$$
u(t+h,x)-u(t,x)
\approx
\sigma(u(t,x))\bigl[U(t+h,x)-U(t,x)\bigr],
\qquad h\downarrow0,
$$
where $U$ denotes the solution to the additive SHE
(that is, $\sigma\equiv1$) driven by the same noise.
Related local approximation estimates were obtained in
\cite{HP15} under additional smoothness assumptions on $\sigma$.
For spatial increments, Foondun et al.~\cite{FKM2015} established
the analogous local linearization
$$
u(t,x+h)-u(t,x)
\approx
\sigma(u(t,x))\bigl[U(t,x+h)-U(t,x)\bigr],
\qquad h\downarrow0.
$$
Thus, the leading small-scale fluctuation of the nonlinear solution
is given by the corresponding Gaussian increment multiplied by
the local coefficient $\sigma(u(t,x))$, with a remainder of strictly
higher order.  This approximation allows the exact local oscillation properties
of the additive solution to be transferred to the nonlinear solution.  For applications of such local approximations to the study of sample-path regularity,  see, e.g.,
\cite{CHKK19,DalangNualartPu2025,Das2022,HuLee2026,HK2017,KKM23,
KhoshnevisanLee2025,KhoshnevisanLeePuXiao2025,Wang2024,WangXiao2024}.

When \(H\in(1/4,1/2)\), Qian et al.~\cite{Qian2026} established temporal
local linearization under assumptions including \(\sigma(0)=0\).
Building on this work, we extend the temporal result to the weighted
framework of Hu and Wang~\cite{HW2022}, thereby allowing
\(\sigma(0)\neq0\). More importantly, we establish the corresponding
spatial local linearization in the rough-noise regime. The spatial
argument requires a different approach from that of Foondun
et al.~\cite{FKM2015} for space--time white noise. Specifically, rather than
restricting the stochastic integral to an increment-dependent spatial
neighborhood, we work on the whole spatial line and combine semigroup
cancellation with weighted fractional-difference estimates for the heat
kernel. The main difficulty in both the temporal and spatial analyses
is to control the nonlocal fractional-difference norm
while retaining the weight in the estimates.   The weighted heat-kernel estimates developed in
Appendix  provide the main analytic tools for this
purpose.

 To state the main results, let $U$ denote the solution of the additive equation
\begin{equation}\label{eq:linearSHE}
 \partial_tU=\partial_x^2U+\dot W,\qquad U(0,\cdot)=0.
\end{equation}
The solution $U$ has the mild representation
\begin{equation}\label{eq:linearSHE solution}
 U(t,x)=\int_0^t\int_\R G_{t-s}(x-y)W(\dd s,\dd y),
\end{equation}
where 
$$
 G_t(x)=\frac1{\sqrt{4\pi t}}e^{-x^2/(4t)}.
$$
 
Following Hu and Wang   \cite[Section~4.1]{HW2022}, we introduce the   weight function
\begin{equation}\label{eq weight}
\lambda(x)=c_{H}(1+|x|^2)^{H-1},
\end{equation}
where $c_{H}>0$ is the normalizing constant determined by
$\int_{\R}\lambda(x)\,\dd x=1.$
For $T>0$ and $2\le q<\infty$, let $\mathcal Z^q_{\lambda,T}$
be the space of random fields $f$ on $[0,T]\times\R$ with finite norm
$$
\norm{f}_{\mathcal Z^q_{\lambda,T}}
:=
\sup_{0\le t\le T}
\norm{f(t)}_{L^q_\lambda(\Omega\times\R)}
+
\sup_{0\le t\le T}
\mathcal N^*_{1/2-H,q}f(t),
$$
where
$$
\norm{f(t)}_{L^q_\lambda(\Omega\times\R)}
 :=
\left(
\int_\R \E|f(t,x)|^q\lambda(x)\dd x
\right)^{1/q},
$$
and 
$$
\mathcal N^*_{1/2-H,q}f(t)
:=
\left(
\int_\R
\norm{f(t,\cdot+h)-f(t,\cdot)}_{L^q_\lambda(\Omega\times\R)}^2
|h|^{2H-2}\dd h
\right)^{1/2}. 
$$ 
When $f$ is independent of $t$, the corresponding
space and norm are denoted by
$\mathcal Z^q_{\lambda,0}$ and
$\norm{f}_{\mathcal Z^q_{\lambda,0}}$,
respectively.

\begin{condition}\label{cond:A}
The initial datum $u_0:\Omega\times\R\to\R$ is
$\mathcal F_0\otimes\mathcal B(\R)$-measurable and belongs to
$\mathcal Z^{q_0}_{\lambda,0}$ for some
$q_0>\frac{6}{4H-1}$.
The coefficient $\sigma:\R\to\R$ is globally Lipschitz continuous.
\end{condition}

Set
$$
q_H:=\max\left\{\frac6{4H-1},
\frac{4(1-H)}{1-2H}\right\}.
$$
For $q>q_H$, define
$$
\vartheta_q
:=
\min\left\{
\frac H2,
\frac{1-2H-4(1-H)/q}{4}
\right\},
\qquad
\delta_t(q)
:=
\frac{(2-H)\vartheta_q}{2(1+\vartheta_q)},
$$
and set
$$
\delta_x^*:=\min\left\{H(1-H),\frac12-H\right\}.
$$

Define the directional remainders by
\begin{align}
R^{\mathrm t}(t,x,h)
=&\, u(t+h,x)-u(t,x)
-\sigma(u(t,x))[U(t+h,x)-U(t,x)], \label{eq Rt}\\ 
R^{\mathrm x}(t,x,h)
=&\, u(t,x+h)-u(t,x)
-\sigma(u(t,x))[U(t,x+h)-U(t,x)]. \label{eq Rx}
\end{align}
 
\begin{theorem}\label{thm:local-linearization}
Assume Condition~\ref{cond:A} holds with $q_0>q_H$. Fix
$0<T_0<T<\infty$, $q_H<q\le q_0$, and $1\le p\le q/2$.  
\begin{itemize}
\item[(i)] Fix $0<\delta<\delta_t(q)$. There exist $C<\infty$ and
$\e_0>0$ such that
\begin{equation}\label{t:eq:main-moment}
 \sup_{T_0\le t\le T}\sup_{x\in\R}
 \lambda(x)^{1/q}\np{R^{\mathrm t}(t,x,\e)}p
 \le C|\e|^{H/2+\delta},\qquad 0<|\e|\le \e_0.
\end{equation} \item[(ii)]
 
Fix $0<\delta<\delta_x^*$. There exist $C<\infty$ and
$\e_0>0$ such that
\begin{equation}\label{eq:main}
 \sup_{T_0\le t\le T}\sup_{x\in\R}
 \lambda(x)^{1/q}\np{R^{\mathrm x}(t,x,\e)}p
 \le C|\e|^{H+\delta},\qquad 0<|\e|\le \e_0.
\end{equation}

\end{itemize}
\end{theorem}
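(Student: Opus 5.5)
Both parts rest on the mild formulation
\[
u(t,x)=\int_\R G_t(x-y)u_0(y)\,\dd y+\int_0^t\!\!\int_\R G_{t-s}(x-y)\sigma(u(s,y))\,W(\dd s,\dd y)
\]
and on the moment bounds of Proposition~\ref{prop:solution}: $u\in\mathcal Z^q_{\lambda,T}$, together with weighted H\"older bounds of the form
\[
\lambda(x)^{1/q}\np{u(t,x)-u(s,y)}{q}\lesssim |t-s|^{H/2}+|x-y|^{H}
\]
for $t,s\ge T_0$. I will bound the stochastic integrals with the fractional Burkholder inequality of \cite{HHLNT2017,HW2022}. For a predictable integrand $\Phi$,
\[
\Bigl\|\int\!\!\int \Phi\,\dd W\Bigr\|_p^2\lesssim \int_0^t\!\!\int_\R \bigl\|\Phi(s,y+h)-\Phi(s,y)\bigr\|_p^2\,|h|^{2H-2}\,\dd h\,\dd y\,\dd s ,
\]
where the quantity under the norm is the nonlocal fractional-difference functional, with the first variable left inside.

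\textbf{Decomposition.} The remainder splits into three pieces:
\begin{enumerate}[label=(\alph*)]
\item the deterministic increment of $G_t*u_0$;
\item a ``far'' stochastic piece, on which $\sigma(u(s,y))$ is not frozen;
\item a ``near'' piece, $\iint \Delta G\,[\sigma(u(s,y))-\sigma(u(t,x))]\,\dd W$, where $\Delta G$ is the relevant kernel increment.
\end{enumerate}

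\textbf{Temporal case (i).} Here $\Delta G$ is $G_{t+\e-s}-G_{t-s}$ on $[0,t]$, plus the new piece $G_{t+\e-s}$ on $[t,t+\e]$.

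The deterministic part (a) is smooth for $t\ge T_0$, so it is $O(\e)$ with weight. This uses the weighted heat-kernel estimates of the Appendix together with $u_0\in\mathcal Z^{q_0}_{\lambda,0}$.

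Since $\sigma(u(t,x))$ is not adapted to the integrals on $[0,t]$, I cut time at $t-\e^{\alpha}$ for some $\alpha\in(0,1)$.
\begin{itemize}
\item On $[0,t-\e^{\alpha}]$, the increment $G_{t+\e-s}-G_{t-s}$ is small, of order $\e(t-s)^{-1}$ in the relevant norm. Together with the boundedness of $\sigma(u)$ in $\mathcal Z^q$, this gives $\e^{1-\alpha(\ldots)}$. The same bound controls the Gaussian counterpart $\sigma(u(t,x))\iint\Delta G\,\dd W$ via H\"older's inequality with exponents $q/p$, which is where $p\le q/2$ enters.
\item On $[t-\e^{\alpha},t+\e]$, I replace $\sigma(u(t,x))$ by the $\mathcal F_{t-\e^{\alpha}}$-measurable $\sigma(u(t-\e^{\alpha},x))$. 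The replacement error is $O(\e^{\alpha H/2})$ times a Gaussian increment of size $\e^{H/2}$. After that, the frozen coefficient factors out of the stochastic integral.
\end{itemize}

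The near piece (c) requires bounding the fractional-difference norm of $y\mapsto \Delta G(s,x-y)\,[\sigma(u(s,y))-\sigma(u(t-\e^{\alpha},x))]$. Expanding the product difference gives two terms:
\begin{itemize}
\item $\Delta G$-differences times $|\sigma(u(s,y))-\sigma(u(\cdot,x))|$, whose size is $(|t-s|^{H/2}+|x-y|^H)$ with weight $\lambda(y)^{-1/q}$. These are absorbed by weighted Gaussian moments $\int|\Delta G|\,|x-y|^{\beta}\lambda(y)^{-2/q}$, which produce $\lambda(x)^{-1/q}$.
\item $\Delta G$ times the spatial increments of $u$. These are controlled by $\mathcal N^*_{1/2-H,q}u(s)$.
\end{itemize}
The Lipschitz continuity of $\sigma$ alone suffices, since no linearization of $\sigma$ itself is needed. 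Optimizing over $\alpha$ yields the gain $\delta<\delta_t(q)$, with $\vartheta_q$ reflecting the integrability loss from the weight, i.e.\ the $4(1-H)/q$ term.

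\textbf{Spatial case (ii).} This proceeds analogously but without a time cut on the whole line. I split time at $t-\e^{\beta}$. On $[0,t-\e^{\beta}]$, $G_{t-s}(x+\e-\cdot)-G_{t-s}(x-\cdot)$ has fractional norm of order $\e\,(t-s)^{-1/2}$ times the usual factor, giving $\e^{1-\beta(\ldots)}$. On $[t-\e^{\beta},t]$, freezing at $u(t-\e^{\beta},x)$ costs $\e^{\beta H/2}\cdot\e^{H}$. The near term again reduces to weighted heat-kernel increment estimates, now with spatial shifts, and the rates balance to $\delta<\min\{H(1-H),1/2-H\}$.

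\textbf{Main obstacle.} The hard step is the near term (c), specifically the fractional-difference functional of the product $\Delta G\cdot[\sigma(u)-\sigma(u_{\rm frozen})]$ with the weight carried through. One cannot localize to an $\e$-neighbourhood as in \cite{FKM2015}, because the $|h|^{2H-2}$ kernel is nonlocal. I would first try to bound it by
\[
\int\!\!\int|\Delta G(y+h)-\Delta G(y)|^2\,\bigl\|\sigma(u(s,y))-\sigma(u(t',x))\bigr\|_q^2\,|h|^{2H-2}
\;+\;\int|\Delta G(y)|^2\bigl\|u(s,y+h)-u(s,y)\bigr\|_q^2\,|h|^{2H-2}.
\]
I would then treat the ratios $\lambda(y)/\lambda(x)$ using $\lambda(y)^{-1}\lesssim\lambda(x)^{-1}(1+|x-y|)^{2-2H}$, which the Gaussian tails absorb.
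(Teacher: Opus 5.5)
Your architecture matches the paper's. You split time at $t-\e^\beta$ (resp.\ $t-\rho$) and freeze the coefficient at an $\mathcal F_{t-\e^\beta}$-measurable value so it factors out. You pay for re-freezing with H\"older's inequality using $2p\le q$. You bound the freezing errors by the product form \eqref{eq:BDG-product} together with $\lambda(x)/\lambda(y)\le C(1+|x-y|)^{2(1-H)}$. Your deviations are harmless: you freeze all of $[t-\e^\alpha,t+\e]$ at the earlier time, and you apply BDG directly to the far spatial piece instead of the paper's semigroup representation with $\int_\R D_\rho(z,\e)\,\dd z=0$. The gap is exactly at the step you flag as the main obstacle, which you resolve by a mechanism that is false.

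For $f=G_r$ or $f=D_r(\cdot,\e)$, the function $z\mapsto\int_\R|f(z+h)-f(z)|^2|h|^{2H-2}\,\dd h$ has no Gaussian tail. Taking $h$ near $-z$ shows it is at least $c_f|z|^{2H-2}$ for large $|z|$, and Lemma~\ref{lem:schwartz} gives the matching upper bound. So the first BDG integral is not a weighted Gaussian moment of $|\Delta G|$, and two things break.
\begin{itemize}
\item The weight ratio contributes $(1+|x-y|)^{a_q}$ with $a_q=4(1-H)/q$. This is integrable against $|x-y|^{2H-2}$ only because $a_q<1-2H$; that is where $q>4(1-H)/(1-2H)$ enters, not through Gaussian tails.
\item If you insert the coefficient increment as you propose, $\|\sigma(u(s,y))-\sigma(u(\tau,x))\|_p^2\lesssim\lambda(y)^{-2/q}(|t-s|^{H}+|x-y|^{2H})$, the spatial part produces $\int^\infty|z|^{4H-2}\,\dd z=\infty$, since $H>1/4$.
\end{itemize}
So the near-term estimate fails as written.

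The paper repairs this differently in the two directions.
\begin{itemize}
\item In space it uses the truncated bound $\rho^H+(|x-y|^{2H}\wedge1)$ (the moment bound for $|x-y|>1$) and Lemma~\ref{lem:weightedenergy}. This yields $\rho^H\e^{2H}+\e^{4H}+\e$, where the extra $\e$ comes from the polynomial tail.
\item In time it interpolates the H\"older bound down to $|x-y|^{4\vartheta}$ with $4\vartheta+a_q<1-2H$ (Lemma~\ref{t:lem:kernels}). This is precisely the origin of $\vartheta_q$, and, after balancing with $\beta=1/(1+\vartheta)$, of $\delta_t(q)$.
\end{itemize}
In your proposal these thresholds are asserted rather than derived. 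The ``Gaussian absorption'' mechanism you describe would in fact produce no $q$-dependence at all, which contradicts your own remark about a $4(1-H)/q$ integrability loss.
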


Theorem~\ref{thm:local-linearization} shows that the small-scale increments of the nonlinear solution $u$
are approximated, to leading order, by the corresponding increments of the additive solution $U$ multiplied by
the local coefficient $\sigma(u(t,x))$. Together with the pathwise remainder estimates established
in Section~\ref{sec:apps}, this approximation allows exact sample-path properties of $U$ to be transferred to $u$. 
 The exact constants follow from the decompositions of $U$
in terms of fractional Brownian motion (fBm, for short) and  its classical
sample-path laws; see Section~\ref{sec:gaussian-inputs}. For related Gaussian results on exact moduli of continuity,
Chung-type laws of the iterated logarithm (LILs, for short), sharp regularity, and small-ball estimates, see Lee and Xiao~\cite{LeeXiao2023},
Hu and Lee~\cite{HuLeeBounded2026}, and Lee and Zhang~\cite{LeeZhang2026}.

To obtain these almost-sure consequences uniformly on compact sets, we
introduce the following pathwise thresholds:
\begin{align}
\delta_{x,\mathrm{pw}}
&:=\left[
\frac{(Hq_0-1)\delta_x^*-2H-2/q_0}
     {Hq_0+3}
\right]_+,
\label{eq:delta-x-pw}\\
\delta_{t,\mathrm{pw}}
&:=\left[
\frac{(Hq_0-2)\delta_t(q_0)-2H-4/q_0}
     {Hq_0+6}
\right]_+,
\label{eq:delta-t-pw}
\end{align}
where $[a]_+:=\max\{a,0\}$.
These thresholds arise from the grid-interpolation argument in
Lemma~\ref{lemma:pathwise}; in particular, both are positive when
$q_0$ is sufficiently large.

Write $\gamma_H=[\Gamma(2H)/\Gamma(H)]^{1/2}$, and let $b_\alpha$
denote the small-ball constant of a standard fBm $B^\alpha$ with
respect to the supremum norm, defined by
\begin{equation}\label{eq:fbm-small-constant}
 b_\alpha
 :=-\lim_{\varepsilon\downarrow0}\varepsilon^{1/\alpha}
 \log\PP\left\{
 \sup_{0\le z\le1}|B^\alpha(z)|\le\varepsilon
 \right\}
 \in(0,\infty).
\end{equation}
The existence of this constant was established
in~\cite{LiLinde1998}.

\begin{proposition}\label{thm:oscillations}
Assume that Condition~\ref{cond:A} holds with $q_0>q_H$.
For each fixed $(t,x)\in(0,\infty)\times\R$, the following
Khinchin and Chung LILs hold almost surely.
\begin{itemize}
\item[(i)] If $\delta_{x,\mathrm{pw}}>0$, then
\begin{align}
 \limsup_{h\downarrow0}
 \frac{u(t,x+h)-u(t,x)}
      {h^H\sqrt{2\log\log(1/h)}}
 =&\, \frac{|\sigma(u(t,x))|}{\sqrt2},
 \label{eq:spatial-lil}\\
 \liminf_{r\downarrow0}\sup_{0\le h\le r}
 \frac{ |u(t,x+h)-u(t,x)|}
      {(r/\log\log(1/r))^H}
 =&\, \frac{b_H^H}{\sqrt2}|\sigma(u(t,x))|.
 \label{eq:spatial-chung}
\end{align}

\item[(ii)] If $\delta_{t,\mathrm{pw}}>0$, then
\begin{align}
 \limsup_{h\downarrow0}
 \frac{u(t+h,x)-u(t,x)}
      {h^{H/2}\sqrt{2\log\log(1/h)}}
 =&\, \gamma_H|\sigma(u(t,x))|,
 \label{eq:temporal-lil}\\
 \liminf_{r\downarrow0}\sup_{0\le h\le r}
 \frac{ |u(t+h,x)-u(t,x)|}
      {(r/\log\log(1/r))^{H/2}}
 =&\, \gamma_H b_{H/2}^{H/2}|\sigma(u(t,x))|.
 \label{eq:temporal-chung}
\end{align}
\end{itemize}
\end{proposition}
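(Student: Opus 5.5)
The plan is to transfer the Gaussian laws for $U$ to $u$ through the decomposition
$$u(t,x+h)-u(t,x)=\sigma(u(t,x))[U(t,x+h)-U(t,x)]+R^{\mathrm x}(t,x,h),$$
and its temporal analogue with $R^{\mathrm t}$. Since $\sigma(u(t,x))$ is a fixed random variable not depending on $h$, each statement splits into two ingredients. The first is the corresponding exact law for the Gaussian increments of $U$ at the fixed point $(t,x)$. The second is an almost-sure remainder bound of strictly smaller order:
$$\sup_{0\le h\le r}|R^{\mathrm x}(t,x,h)|=o\!\left(r^{H}(\log\log(1/r))^{-H}\right),\qquad \sup_{0\le h\le r}|R^{\mathrm t}(t,x,h)|=o\!\left(r^{H/2}(\log\log(1/r))^{-H/2}\right).$$
These rates suffice for both the Khinchin and the Chung statements, because the Chung normalisation is the smaller one. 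Since a polynomial gain $r^{\delta}$ dominates any power of $\log\log$, it is enough to prove $\sup_{0\le h\le r}|R^{\mathrm x}(t,x,h)|\le C_\omega r^{H+\delta'}$ a.s. for some $\delta'>0$, and similarly $r^{H/2+\delta'}$ in time.

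\textbf{Gaussian step.} From Section~\ref{sec:gaussian-inputs} I would take the decompositions of $U$. In space, $U(t,x+h)-U(t,x)=2^{-1/2}[B^H(x+h)-B^H(x)]+$ a process that is smooth in $x$ (Lipschitz, or at least $C^{1-}$ a.s.). In time, $U(t+h,x)-U(t,x)=\gamma_H[B^{H/2}(t+h)-B^{H/2}(t)]+$ a smooth process. Here $B^H$ and $B^{H/2}$ are fBm's, possibly up to independent smooth parts. The smooth parts are $O(h)$, which is negligible against $h^{H}$ and $h^{H/2}$. The classical Khinchin LIL for fBm, $\limsup h^{-\alpha}(2\log\log 1/h)^{-1/2}|B^\alpha(s+h)-B^\alpha(s)|=1$, then gives \eqref{eq:spatial-lil} and \eqref{eq:temporal-lil}. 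The Chung LIL for fBm, with constant $b_\alpha^\alpha$ obtained from \eqref{eq:fbm-small-constant} via the small-ball estimate of~\cite{LiLinde1998}, gives \eqref{eq:spatial-chung} and \eqref{eq:temporal-chung}.

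\textbf{Sign and the $\limsup$ without absolute value.} We multiply by the random factor $\sigma(u(t,x))$. Since $\sigma(u(t,x))$ is $\mathcal F_t$-measurable while the increments need not be independent of it, I would not argue by independence. Instead I apply the Gaussian LIL on a probability-one event $\Omega_0$ on which both $\limsup$ and $\liminf$ of the normalised fBm increment equal $\pm1$. This holds by the symmetry of $B$ and the a.s. nature of the LIL. On $\Omega_0$, multiplying by a fixed number $c$ gives a $\limsup$ equal to $|c|$ whatever its sign. If $\sigma(u(t,x))=0$, both sides reduce to the remainder statement.

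\textbf{Pathwise remainder (main obstacle).} The moment bounds of Theorem~\ref{thm:local-linearization} control only $\np{R^{\mathrm x}(t,x,\e)}{p}$ for each single $\e$. A supremum over $h\in[0,r]$ requires chaining, which I expect is exactly Lemma~\ref{lemma:pathwise}, with thresholds $\delta_{x,\mathrm{pw}}$ and $\delta_{t,\mathrm{pw}}$. The plan is as follows.
\begin{enumerate}
\item Take a dyadic grid $h_{n,k}=k2^{-n\beta}$ inside $[0,2^{-n}]$ with $\beta>1$.
\item Use \eqref{eq:main} with $p=q_0/2$ and Chebyshev to bound $\PP(|R^{\mathrm x}(t,x,h_{n,k})|>2^{-n(H+\delta')})$. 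The factor $\lambda(x)^{1/q}$ is a harmless constant at fixed $x$.
\item Sum over the roughly $2^{n(\beta-1)}$ grid points and over $n$, and apply Borel--Cantelli.
\item Interpolate between grid points by writing $R^{\mathrm x}(t,x,h)-R^{\mathrm x}(t,x,h_{n,k})$ as an increment of $u$ plus $\sigma(u(t,x))$ times an increment of $U$, over a distance at most $2^{-n\beta}$.
\item Control this via Kolmogorov-type H\"older bounds, $|u(t,y)-u(t,y')|\le C_\omega|y-y'|^{H-\eta}$, available from the moment estimates with $q_0$ moments.
\end{enumerate}
Choosing $\beta$ so that $(H-\eta)\beta>H+\delta'$, while keeping the Borel--Cantelli sum finite, yields the positivity condition $\delta_{x,\mathrm{pw}}>0$. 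The temporal case is identical with $H/2$ and $\delta_t(q_0)$. Balancing these exponents is the delicate point, and I would carry it out by invoking Lemma~\ref{lemma:pathwise} directly rather than redoing it.
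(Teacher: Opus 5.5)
Your proposal is correct and follows the paper's proof essentially step for step. You write each increment as $\sigma(u(t,x))$ times the increment of $U$ plus $R^{\mathrm x}$ or $R^{\mathrm t}$, remove the smooth Gaussian corrections via Lemma~\ref{lem:completion}, apply the fBm laws of Lemma~\ref{lem:fbm-laws}(i)--(ii), and control the remainders pathwise by Lemma~\ref{lemma:pathwise}, whose polynomial gain beats even the Chung normalisation. Your explicit handling of the sign of $\sigma(u(t,x))$, on the event where the normalised fBm increment has $\limsup=1$ and $\liminf=-1$, fills in a detail the paper leaves implicit.
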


Uniform control of the local-linearization remainders also yields
the exact spatial and temporal uniform moduli of continuity
on compact intervals.

\begin{theorem}\label{thm:uniform-moduli}
Assume that Condition~\ref{cond:A} holds with $q_0>q_H$.
\begin{itemize}
\item[(i)] Assume $\delta_{x,\mathrm{pw}}>0$. For each fixed $t>0$,
with probability one, every
nondegenerate compact interval $J\subset\R$ satisfies
\begin{equation}\label{eq:spatial-modulus}
 \lim_{r\downarrow0}\sup_{\substack{x,y\in J\\ |y-x|\le r}}
 \frac{ 
       |u(t,y)-u(t,x)|}
      {r^H\sqrt{2\log(1/r)}}
 =\frac1{\sqrt2}\max_{y\in J}|\sigma(u(t,y))|.
\end{equation}

\item[(ii)] Assume $\delta_{t,\mathrm{pw}}>0$. For each fixed $x\in\R$,
with probability one, every
nondegenerate compact interval $I\subset(0,\infty)$ satisfies
\begin{equation}\label{eq:temporal-modulus}
 \lim_{r\downarrow0}\sup_{\substack{s,t\in I\\ |t-s|\le r}}
 \frac{ 
       |u(t,x)-u(s,x)|}
      {r^{H/2}\sqrt{2\log(1/r)}}
 =\gamma_H\max_{s\in I}|\sigma(u(s,x))|.
\end{equation}
\end{itemize}
\end{theorem}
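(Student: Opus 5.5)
The plan is to transfer the Gaussian uniform moduli of $U$ to $u$. For this we need the pathwise remainder control of Lemma~\ref{lemma:pathwise}, the local linearization of Theorem~\ref{thm:local-linearization}, and the exact modulus of the Gaussian process from Section~\ref{sec:gaussian-inputs}. We treat (i); (ii) is identical with $H$ replaced by $H/2$ and $1/\sqrt2$ by $\gamma_H$.

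\emph{Step 1 (Gaussian input).} For fixed $t>0$, the process $x\mapsto U(t,x)$ equals $2^{-1/2}B^H(x)$ plus a smooth Gaussian process with $C^1$ paths on compacts; this is the fBm decomposition of Section~\ref{sec:gaussian-inputs}. The smooth part contributes $O(r)=o(r^H\sqrt{\log(1/r)})$. We then need a weighted form of the fBm uniform modulus: almost surely, for every compact $J$, every subinterval $J'\subset J$ and every continuous $g\ge0$ on $J$,
\[
\lim_{r\downarrow0}\sup_{x,y\in J,\,|y-x|\le r}\frac{g(x)|U(t,y)-U(t,x)|}{r^H\sqrt{2\log(1/r)}}=\frac1{\sqrt2}\max_J g .
\]
The upper bound follows directly from the unweighted Lévy modulus $\lim=1/\sqrt2$ on $J$. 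For the lower bound, fix $\eta>0$ and a subinterval $J'$ on which $g\ge\max_J g-\eta$, and apply the lower modulus of $B^H$ on $J'$. Since $g=|\sigma(u(t,\cdot))|$ is random, the lower-bound statement must hold simultaneously for all subintervals. We obtain this by taking the countable family of rational subintervals together with path continuity of $u$, which gives a single null set.

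\emph{Step 2 (remainder).} From the definition \eqref{eq Rx}, write $u(t,y)-u(t,x)=\sigma(u(t,x))[U(t,y)-U(t,x)]+R^{\mathrm x}(t,x,y-x)$. The key claim is
\[
\sup_{x,y\in J,|y-x|\le r}|R^{\mathrm x}(t,x,y-x)|=o\big(r^H\big)\quad\text{a.s.}
\]
This is exactly what Lemma~\ref{lemma:pathwise} supplies when $\delta_{x,\mathrm{pw}}>0$: it upgrades the moment bound \eqref{eq:main} to a uniform bound of order $r^{H+\delta'}$ for some $0<\delta'<\delta_{x,\mathrm{pw}}$. The route is a dyadic grid in $(x,h)$ together with Chebyshev and Borel--Cantelli, then interpolation off the grid using Hölder continuity of $u$ and $U$ with exponent slightly below $H$; the weight $\lambda(x)^{1/q}$ is bounded below on compact $J$. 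Since $r^{H+\delta'}=o(r^H\sqrt{\log(1/r)})$, the remainder is negligible after division.

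\emph{Step 3 (combine).} Dividing by $r^H\sqrt{2\log(1/r)}$ and applying Step 1 with $g=|\sigma(u(t,\cdot))|$, which is continuous because $u$ is continuous and $\sigma$ is Lipschitz, gives \eqref{eq:spatial-modulus}. For (ii), use the temporal decomposition $U(\cdot,x)=\gamma_H B^{H/2}+{}$(a process smooth on $I\subset(0,\infty)$); this decomposition is where the compactness of $I$ away from $0$ enters. Combine it with the temporal part of Lemma~\ref{lemma:pathwise} under $\delta_{t,\mathrm{pw}}>0$.

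The main obstacle is Step 2. The moment bound \eqref{eq:main} is only pointwise in $(x,\e)$, so turning it into a uniform almost-sure bound over a two-parameter family requires a grid fine enough that interpolation errors are $o(r^H)$, yet coarse enough that the union bound with $q_0$ moments still converges. This balance is precisely what the threshold \eqref{eq:delta-x-pw} encodes, so we rely on Lemma~\ref{lemma:pathwise} for it.
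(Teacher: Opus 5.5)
Your proposal is correct and follows essentially the same route as the paper. The paper builds a common probability-one event from the fBm modulus on rational intervals, Lemma~\ref{lemma:pathwise}, smoothness of the correction term and continuity of $\sigma(u(t,\cdot))$; it then gets the upper bound via $\max_J|\sigma(u(t,\cdot))|$ and the lower bound via a rational subinterval on which $|\sigma(u(t,\cdot))|$ is within $\eta$ of that maximum. The only difference is cosmetic: you package the smooth Gaussian correction into a ``weighted modulus'' for $U$, whereas the paper works directly with $B_t^H$ and absorbs the smooth correction together with the remainder into a single error term $d_J(r)$.
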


 The relationship between our uniform modulus and the coefficient-normalized result of Hu and Lee~\cite[Theorem~1.2]{HuLee2026} is discussed in Remark~\ref{rem:normalized-moduli}.

In Section~\ref{sec:prelim}, we present the stochastic integration
framework and the weighted estimates used throughout the paper.
In Sections~\ref{sec:temporal} and~\ref{sec:spatial}, we establish the
temporal and spatial local linearizations, respectively.
In Section~\ref{sec:apps}, we apply these results to derive exact
uniform moduli of continuity and pointwise laws of the iterated
logarithm.
In Appendix, we collect the auxiliary heat-kernel estimates used in
the proofs.

\section{Weighted framework and preliminary estimates}\label{sec:prelim}

\subsection{Covariance structure and stochastic integration}\label{sec:noise}

We recall some notation from \cite{HHLNT2017}. Let $\mathcal{D}(\mathbb{R})$ denote the space of real-valued infinitely differentiable functions with compact support in $\mathbb{R}$. For $f\in\mathcal{D}(\mathbb{R})$, its Fourier transform is defined by
$$
\widehat f(\xi):=\int_{\mathbb{R}} e^{-i\xi x}f(x)\,\dd x.
$$

Let $\mathcal{H}$ be the Hilbert space obtained by completing $\mathcal{D}(\mathbb{R})$ with respect to the inner product
\begin{equation}\label{eq:inner}
\begin{split}
\langle \phi,\psi\rangle_{\mathcal{H}}
 \coloneqq &\, c_{1,1}\int_{\mathbb{R}}
\widehat{\phi}(\xi)\overline{\widehat{\psi}(\xi)}
|\xi|^{1-2H}\,\dd\xi
 \\
 =&\, H\left(\frac{1}{2}-H\right)
\int_{\mathbb{R}^2}
[\phi(x+y)-\phi(x)]
[\psi(x+y)-\psi(x)]
|y|^{2H-2}\,\dd x\,\dd y,
 \end{split}
\end{equation}
for $\phi,\psi\in\mathcal{D}(\mathbb{R})$. Here,   $H\in(1/4,1/2)$, $c_{1,1}$ is given in \eqref{eq:c11},  and the second identity in \eqref{eq:inner} follows from \cite[Proposition 2.8]{BJQ2015}.
 
 Assume that $(\Omega,\mathcal F,(\mathcal F_t)_{t\ge0},\mathbb P)$ is a complete
filtered probability space satisfying the usual conditions. Let
$\mathcal D(\R_+\times\R)$ denote the space of real-valued infinitely
differentiable functions with compact support on $\R_+\times\R$. The
driving noise is represented by a centered Gaussian family
$\{W(\phi):\phi\in\mathcal D(\R_+\times\R)\}$ with covariance
\begin{equation*}
\mathbb{E}[W(\phi)W(\psi)]
=
c_{1,1}\int_{\R_+\times\R}
\widehat{\phi}(s,\xi)
\overline{\widehat{\psi}(s,\xi)}
|\xi|^{1-2H}\,\dd s\,\dd\xi,
\end{equation*}
where    
$\widehat{\phi}(s,\xi)$ denotes the Fourier transform of $\phi(s,\cdot)$
in the spatial variable.
We assume that $W$ is compatible with the filtration: after extension
by isometry, the family
\[
W_t(\phi):=W(\mathbf 1_{[0,t]}\phi),\qquad \phi\in\mathcal H,
\]
is an $\mathcal H$-cylindrical Wiener process relative to
$(\mathcal F_t)_{t\ge0}$. In particular, $W_t(\phi)$ is adapted  and
\[
\E[W_t(\phi)W_s(\psi)]=(t\wedge s)\langle\phi,\psi\rangle_{\mathcal H}.
\]

Let $\Lambda_H$ denote the space of predictable processes $g$ on $\R_+\times\R$ such  that  
$$
\mathbb{E}\left[\int_{\R_+}\|g(s)\|_{\mathcal H}^2\,\dd s\right]<\infty.
$$
By \cite[Proposition 2.3]{HHLNT2017}, for every $g\in\Lambda_H$, the stochastic integral
$
\int_{\R_+}\int_{\R}g(s,x)\,W(\dd s,\dd x)
$
is well defined as an $L^2(\Omega)$-random variable and satisfies the isometry
\begin{equation}\label{eq:stochastic-isometry}
\mathbb{E}\left[
\left(
\int_{\R_+}\int_{\R}g(s,x)\,W(\dd s,\dd x)
\right)^2
\right]
= \mathbb{E}\left[
\int_{\R_+}\|g(s)\|_{\mathcal H}^2\,\dd s
\right].
\end{equation}

For a random variable $X$ and $p\ge1$, we write
\[
\|X\|_p:=\|X\|_{L^p(\Omega)}.
\] 
For a random  function $f$ defined on $\Omega \times \R$ and $p\ge1$, define the pointwise fractional moment seminorm by
$$
\mathcal N_{1/2-H,p}f(x)
:=
\left(
\int_\R
\|f(x+h)-f(x)\|_{p}^2
|h|^{2H-2}\,\dd h
\right)^{1/2}.
$$ 

We shall use the following BDG-type inequality.
\begin{proposition}[{\cite[Proposition 3.2]{HHLNT2017}}]
\label{prop:bdg}
Let $f\in\Lambda_H$. Then, for every $p\ge2$,
\begin{equation}\label{eq:BDG}
\left\|
\int_0^t\int_{\R}f(s,y)\,W(\dd s,\dd y)
\right\|_{p}
\le
\sqrt{4p}\,c
\left(
\int_0^t\int_{\R}
\bigl[\mathcal N_{1/2-H,p}f(s,y)\bigr]^2
\,\dd y\,\dd s
\right)^{1/2},
\end{equation}
where $c$ is a constant depending only on $H$.
\end{proposition}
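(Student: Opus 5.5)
The plan is to view the stochastic integral as a continuous martingale in $t$, apply the Burkholder--Davis--Gundy inequality with an explicit constant, and then move the $L^{p}(\Omega)$-norm inside the $\mathcal H$-norm. The second step uses Minkowski's inequality together with the difference form of the inner product in \eqref{eq:inner}.

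\emph{Step 1 (martingale structure).} For $f\in\Lambda_H$ I will set $M_r:=\int_0^r\int_\R f(s,y)\,W(\dd s,\dd y)$ for $0\le r\le t$. Because $W_t$ is an $\mathcal H$-cylindrical Wiener process and $f$ is predictable, the construction in \cite[Proposition 2.3]{HHLNT2017} and the isometry \eqref{eq:stochastic-isometry} show that $M$ is a continuous square-integrable martingale. Its quadratic variation is
\[
\langle M\rangle_t=\int_0^t\|f(s)\|_{\mathcal H}^2\,\dd s .
\]
I would check this first for elementary predictable integrands $f=\sum_j X_j\mathbf 1_{(a_j,b_j]}(s)\phi_j(y)$, with $X_j$ bounded and $\mathcal F_{a_j}$-measurable, and then pass to the limit by density in $\Lambda_H$.

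\emph{Step 2 (BDG with explicit constant).} Next I apply the BDG inequality for continuous martingales in the form with the sharp-order constant (Carlen--Kr\'ee):
\[
\|M_t\|_p\le 2\sqrt p\,\bigl\|\langle M\rangle_t^{1/2}\bigr\|_p=\sqrt{4p}\,\Bigl\|\int_0^t\|f(s)\|_{\mathcal H}^2\,\dd s\Bigr\|_{p/2}^{1/2},\qquad p\ge2 .
\]
Since $p/2\ge1$, the quantity $\|\cdot\|_{p/2}$ is a norm. Minkowski's inequality therefore moves it inside the time integral:
\[
\Bigl\|\int_0^t\|f(s)\|_{\mathcal H}^2\,\dd s\Bigr\|_{p/2}\le\int_0^t\bigl\|\|f(s)\|_{\mathcal H}^2\bigr\|_{p/2}\,\dd s .
\]

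\emph{Step 3 (unfolding the $\mathcal H$-norm).} By the second expression in \eqref{eq:inner},
\[
\|f(s)\|_{\mathcal H}^2=H\bigl(\tfrac12-H\bigr)\int_{\R^2}\bigl[f(s,y+h)-f(s,y)\bigr]^2|h|^{2H-2}\,\dd y\,\dd h .
\]
Applying Minkowski's inequality once more in $L^{p/2}(\Omega)$ gives
\[
\bigl\|\|f(s)\|_{\mathcal H}^2\bigr\|_{p/2}\le H\bigl(\tfrac12-H\bigr)\int_{\R^2}\bigl\|[f(s,y+h)-f(s,y)]^2\bigr\|_{p/2}\,|h|^{2H-2}\,\dd y\,\dd h .
\]
Each integrand satisfies $\bigl\|[f(s,y+h)-f(s,y)]^2\bigr\|_{p/2}=\|f(s,y+h)-f(s,y)\|_p^2$. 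Integrating in $h$ then produces $\bigl[\mathcal N_{1/2-H,p}f(s,y)\bigr]^2$. Combining Steps 2 and 3 gives \eqref{eq:BDG} with $c=\sqrt{H(1/2-H)}$, which depends only on $H$.

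\emph{Main obstacle.} The only delicate point is Step 1 at the level of generality of $\Lambda_H$. The difference representation of the $\mathcal H$-norm is an identity only on $\mathcal D(\R)$; elements of the completion $\mathcal H$ may be distributions. To handle this, I would first prove the estimate for smooth elementary integrands, where every step above is legitimate. I would then extend it by approximation, using Fatou's lemma on the right-hand side and the $L^2(\Omega)$-convergence of the stochastic integrals (via \eqref{eq:stochastic-isometry}) on the left. If the right-hand side of \eqref{eq:BDG} is infinite, there is nothing to prove.
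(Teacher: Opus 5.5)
Your Steps 1--3 are the argument behind the cited result; the paper gives no proof of its own and simply quotes \cite[Proposition~3.2]{HHLNT2017}. That argument is BDG with the Carlen--Kr\'ee constant $2\sqrt p$, followed by Minkowski's inequality in $L^{p/2}(\Omega)$ applied to the difference form of $\|f(s)\|_{\mathcal H}^2$. Your constant $c=\sqrt{H(1/2-H)}$ is correct.

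\textbf{The approximation argument does not work as written.} Write $R(f)$ for the right-hand side of \eqref{eq:BDG}. To pass $\|I(f_n)\|_p\le C\,R(f_n)$ to the limit, you need two things.
\begin{itemize}
\item On the left, lower semicontinuity. This is where Fatou's lemma belongs, after extracting an almost surely convergent subsequence from the $L^2(\Omega)$-convergence.
\item On the right, $\limsup_n R(f_n)\le R(f)$.
\end{itemize}
Fatou's lemma on the right gives only $R(f)\le\liminf_n R(f_n)$, which is the wrong direction. Moreover, convergence $f_n\to f$ in $\Lambda_H$ is a second-moment statement and gives no control of $\mathcal N_{1/2-H,p}(f_n-f)$ when $p>2$.

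\textbf{The detour is unnecessary.} For every $\phi\in L^2(\R)$, Plancherel's and Tonelli's theorems give
\[
\int_{\R^2}|\phi(x+h)-\phi(x)|^2|h|^{2H-2}\,\dd x\,\dd h
=\frac1{2\pi}\int_\R|\widehat\phi(\xi)|^2\left(\int_\R|e^{ih\xi}-1|^2|h|^{2H-2}\,\dd h\right)\dd\xi .
\]
The inner integral is a constant multiple of $|\xi|^{1-2H}$, with exactly the constants of \eqref{eq:inner}. So the difference form of the $\mathcal H$-norm holds for such $\phi$, with both sides possibly infinite. Step 3 can therefore be applied pathwise for each fixed $s$.

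Your worry that elements of $\mathcal H$ might be distributions is also unfounded here. $\mathcal H$ is the homogeneous Sobolev space $\dot H^{1/2-H}(\R)$, which embeds continuously into $L^{1/H}(\R)$, so its elements are functions. The identity then extends from $\mathcal D(\R)$ to all of $\mathcal H$ by density together with almost-everywhere convergence along a subsequence.

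Similarly, the quadratic-variation formula in Step 1 is the standard one for stochastic integrals against a cylindrical Wiener process and holds on all of $\Lambda_H$. Once you drop the ``main obstacle'' paragraph, Steps 1--3 as written give a complete proof with no limiting argument for the final inequality.
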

 
 For a deterministic kernel $g=\{g_s(y):s\ge0,\ y\in\R\}$
and a predictable field $\xi=\{\xi(s,y):s\ge0,\ y\in\R\}$,
\eqref{eq:BDG} yields, for $p\ge2$ and $0\le t_1<t_2$,
\begin{equation}\label{eq:BDG-product}
\begin{aligned}
&\norm{\int_{t_1}^{t_2}\int_\R
g_s(x-y)\xi(s,y)\,W(\dd s,\dd y)}_{p}^2\\
 \le&\, C_{p,H}\int_{t_1}^{t_2}\int_{\R^2}
|g_s(x-y-h)-g_s(x-y)|^2
\norm{\xi(s,y)}_{p}^2
|h|^{2H-2}\dd y\dd h\dd s\\
&+C_{p,H}\int_{t_1}^{t_2}\int_\R
|g_s(x-y)|^2
\bigl[\mathcal N_{1/2-H,p}\xi(s,y)\bigr]^2\dd y\dd s.
\end{aligned}
\end{equation}

 We use the following notions of mild and weak solutions,
adapted from \cite[Definition~1.4]{HW2022}.

\begin{definition}[{\cite[Definition~1.4]{HW2022}}]
\label{def:weak-solution}
Let $\{u(t,x):t\in[0,T],\ x\in\R\}$ be a real-valued adapted
stochastic process such that, for every $t\in[0,T]$ and $x\in\R$,
$$
G_{t-s}(x-y)\sigma(u(s,y))\mathbf{1}_{[0,t]}(s)
$$
is integrable with respect to $W$.
\begin{itemize}
\item[(i)] We say that $u$ is a
\emph{mild solution} to \eqref{eq:SHE} if, for every
$t\in[0,T]$ and $x\in\R$,
\begin{equation}\label{eq solution nonlinear}
u(t,x)
=
(G_t*u_0)(x)
+
\int_0^t\int_\R
G_{t-s}(x-y)\sigma(u(s,y))\,W(\dd s,\dd y) \ \ \ \  \text{a.s.}
\end{equation}
\item[(ii)]
We say that \eqref{eq:SHE} admits a \emph{weak solution} on $[0,T]$
if there exists a complete filtered probability space
$$
\left(\widetilde{\Omega},\widetilde{\mathcal F},
(\widetilde{\mathcal F}_t)_{t\ge0},\widetilde{\mathbb P}\right)
$$
satisfying the usual conditions and carrying an initial field
$\widetilde u_0$, a Gaussian random field $\widetilde W$,
and an $(\widetilde{\mathcal F}_t)_{t\ge0}$-adapted random field
$$
\{\widetilde u(t,x):t\in[0,T],\ x\in\R\}
$$
 
with the following properties.
The initial field $\widetilde u_0$ is
$\widetilde{\mathcal F}_0\otimes\mathcal B(\R)$-measurable,
and its law as an $L^{q_0}_\lambda(\R)$-valued random variable
coincides with that of $u_0$.
The noise $\widetilde W$ has the same law as $W$,
and its associated $\mathcal H$-cylindrical process is a Wiener process
with respect to $(\widetilde{\mathcal F}_t)_{t\ge0}$.
Finally, $\widetilde u$ is a mild solution to \eqref{eq:SHE}
with initial datum $\widetilde u_0$ and driving noise $\widetilde W$.
\end{itemize}
\end{definition}

 \begin{lemma}\label{lem:initial-smoothing}
\textup{(i)} For the weight function $\lambda$
defined by \eqref{eq weight}, we have
\begin{equation}\label{eq:weight-ratio}
\frac{\lambda(x)}{\lambda(y)}
\le C_H(1+|x-y|)^{2(1-H)},
\qquad x,y\in\R.
\end{equation}
\textup{(ii)} Let $0<T_0<T<\infty$,
$2\le q<\infty$, $1\le p\le q$, and $j=0,1,2$.
If $f\in L^q_\lambda(\Omega\times\R)$, then
\begin{equation}\label{eq:msmooth}
\sup_{T_0\le t\le T}\sup_{x\in\R}
\lambda(x)^{1/q}
\norm{\partial_x^j(G_t*f)(x)}_p
\le
C_{T_0,T,q,H,j}
\norm{f}_{L^q_\lambda(\Omega\times\R)}.
\end{equation}
 
\end{lemma}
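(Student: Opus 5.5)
For (i), I will compare the weight at two points through the power form $\lambda(x)/\lambda(y)=\bigl((1+|y|^2)/(1+|x|^2)\bigr)^{1-H}$. This makes it enough to show that $1+|y|^2\le C(1+|x|^2)(1+|x-y|)^2$. Write $y=x+z$ with $z=y-x$. Then $|y|^2\le 2|x|^2+2|z|^2$, so $1+|y|^2\le 2(1+|x|^2)(1+|z|^2)$, which is a Peetre-type inequality. Since $1+|z|^2\le(1+|z|)^2$, raising to the power $1-H>0$ gives \eqref{eq:weight-ratio} with $C_H=2^{1-H}$.

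For (ii), I will write $\partial_x^j(G_t*f)(x)=\int_\R \partial^j G_t(x-y)f(y)\,\dd y$, with the derivative moved onto the kernel.

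1. By Minkowski's inequality in $L^p(\Omega)$ and the bound $\|\cdot\|_p\le\|\cdot\|_q$ for $p\le q$, we have $\norm{\partial_x^j(G_t*f)(x)}_p\le\int_\R|\partial^jG_t(x-y)|\,\norm{f(y)}_q\,\dd y$.
2. I will insert the weight by writing $\norm{f(y)}_q=\lambda(y)^{-1/q}\bigl(\lambda(y)^{1/q}\norm{f(y)}_q\bigr)$ and apply Hölder's inequality in $y$ with exponents $q'$ and $q$. This gives
\[
\norm{\partial_x^j(G_t*f)(x)}_p\le\Bigl(\int_\R|\partial^jG_t(x-y)|^{q'}\lambda(y)^{-q'/q}\,\dd y\Bigr)^{1/q'}\norm{f}_{L^q_\lambda(\Omega\times\R)} .
\]
3. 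I multiply by $\lambda(x)^{1/q}$ and use part (i): $\lambda(x)^{q'/q}\lambda(y)^{-q'/q}\le C(1+|x-y|)^{2(1-H)q'/q}$. After multiplication, the prefactor is bounded by $\bigl(\int_\R|\partial^jG_t(z)|^{q'}(1+|z|)^{2(1-H)q'/q}\,\dd z\bigr)^{1/q'}$, and this no longer depends on $x$.
4. I bound this integral uniformly for $t\in[T_0,T]$. For such $t$ and $j\le2$, we have $|\partial^jG_t(z)|\le C_{T_0,T}(1+|z|^2)e^{-z^2/(4T)}$. Gaussian decay beats any polynomial weight, so the integral is finite and bounded by a constant depending only on $T_0,T,q,H,j$.

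This yields \eqref{eq:msmooth}.

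The only point needing care is that the lower bound $T_0>0$ is what keeps the kernel derivatives $\partial^jG_t$ bounded, roughly by $t^{-(1+j)/2}$ up to Gaussian factors. The step I expect to need the most attention is step 3, the transfer of the weight from $y$ to $x$ via (i). The exponents work out to the harmless polynomial factor $(1+|z|)^{2(1-H)q'/q}$, so no genuine obstacle arises.
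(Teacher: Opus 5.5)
Your proof is correct and follows essentially the same route as the paper: H\"older in $y$ with exponents $q',q$, transfer of the weight from $y$ to $x$ via (i), and uniform Gaussian decay of $\partial^jG_t$ for $t\in[T_0,T]$. The differences are minor. You prove (i) directly from the Peetre-type bound $1+|y|^2\le 2(1+|x|^2)(1+|x-y|^2)$, where the paper cites Hu and Wang. You move the $L^p(\Omega)$-norm inside the $\dd y$ integral by Minkowski before applying H\"older, whereas the paper argues pathwise and then uses $p\le q$ on the random variable $\|f\|_{L^q_\lambda(\R)}$. Finally, you take differentiation under the integral sign for granted, while the paper justifies it by Gaussian domination.
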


\begin{proof}
Part (i) follows from the proof of
\cite[Lemma~2.5]{HW2022}.

For (ii), we first consider deterministic $f$ and put
$q'=q/(q-1)$. 
By H\"older's inequality and (i), we have
$$
\begin{aligned}
&\int_\R
|\partial_x^jG_t(x-y)|\,|f(y)|\dd y\\
\le&\,
\norm{f}_{L^q_\lambda}
\left(
\int_\R
|\partial_x^jG_t(z)|^{q'}
\lambda(x-z)^{-q'/q}\dd z
\right)^{1/q'}\\
\le&\,
C\lambda(x)^{-1/q}\norm{f}_{L^q_\lambda}
\left(
\int_\R
|\partial_x^jG_t(z)|^{q'}
(1+|z|)^{2(1-H)q'/q}\dd z
\right)^{1/q'}\\
\le&\,
C_{T_0,T,q,H,j}
\lambda(x)^{-1/q}\norm{f}_{L^q_\lambda},
\end{aligned}
$$
where the last inequality follows from the uniform Gaussian
decay of the heat-kernel derivatives for $t\in[T_0,T]$.

 For each fixed $x\in\R$, the Gaussian bounds give
$$
\sup_{\substack{t\in[T_0,T]\\ |a|\le1}}
|\partial_x^mG_t(x+a-y)|
\le C e^{-c|x-y|^2},
\qquad 0\le m\le j.
$$
Since $e^{-c|x-y|^2}|f(y)|$ is integrable, we may differentiate
the convolution by differentiating the heat kernel inside the
integral. The preceding estimate then proves \eqref{eq:msmooth}
for deterministic $f$.

For random $f\in L^q_\lambda(\Omega\times\R)$, we have
$f(\omega,\cdot)\in L^q_\lambda(\R)$ almost surely.
Applying the preceding estimate pathwise and using $p\le q$,
we obtain
$$
\begin{aligned}
\norm{\partial_x^j(G_t*f)(x)}_p
&\le
C\lambda(x)^{-1/q}
\left\|\norm{f}_{L^q_\lambda(\R)}\right\|_p\\
&\le
C\lambda(x)^{-1/q}
\left(\E\norm{f}_{L^q_\lambda(\R)}^q\right)^{1/q}\\
&=
C\lambda(x)^{-1/q}
\norm{f}_{L^q_\lambda(\Omega\times\R)}.
\end{aligned}
$$
This proves \eqref{eq:msmooth}.
\end{proof}

 The next proposition follows from  Lemma~\ref{lem:initial-smoothing}(ii) and the estimates and approximation argument in 
\cite[Theorem~1.5, Proposition~4.2, and Lemmas~4.5--4.7]{HW2022}, with the initial field included in the weak-convergence argument. 

\begin{proposition}\label{prop:solution}
Assume Condition~\ref{cond:A} holds. For every $T>0$,
there exists a weak solution $u$ to \eqref{eq:SHE}
on $[0,T]\times\R$ such that
$$
\norm{u}_{\mathcal Z^{q_0}_{\lambda,T}}<\infty.
$$
Moreover, the following estimates hold for any such weak solution.
\begin{itemize}
\item[(i)]
For every $0<T_0<T<\infty$,
$\frac{6}{4H-1}<q\le q_0$, and $1\le p\le q$,
we have
\begin{equation}\label{eq:umom}
\sup_{T_0\le t\le T}\sup_{x\in\R}
\lambda(x)^{1/q}\norm{u(t,x)}_p<\infty.
\end{equation}
\item[(ii)]
If, in addition, $q_H<q\le q_0$, then
for $1\le p\le q$,
\begin{equation}\label{eq:ufrac}
\sup_{T_0\le t\le T}\sup_{x\in\R}
\lambda(x)^{1/q}
\mathcal N_{1/2-H,p}u(t,x)<\infty.
\end{equation}
\end{itemize}
\end{proposition}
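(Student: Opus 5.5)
The proposition has two parts. The existence of a weak solution with $\norm{u}_{\mathcal Z^{q_0}_{\lambda,T}}<\infty$ is taken from Hu and Wang. The two pointwise weighted bounds \eqref{eq:umom} and \eqref{eq:ufrac} I would derive from the mild formulation \eqref{eq solution nonlinear}, using the $\mathcal Z^{q}_{\lambda,T}$ bound as input.

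\textbf{Existence.} I would follow the approximation scheme of \cite[Theorem~1.5]{HW2022}:
\begin{itemize}
\item regularize the noise in space and solve the smoothed equations;
\item obtain uniform bounds in $\mathcal Z^{q_0}_{\lambda,T}$ from \cite[Proposition~4.2]{HW2022};
\item get tightness in a space of continuous functions with polynomial weights via \cite[Lemmas~4.5--4.7]{HW2022};
\item pass to the limit by Skorokhod's representation.
\end{itemize}
The only change from Hu and Wang is that $u_0$ is now random. I would therefore include $u_0$ as an $L^{q_0}_\lambda$-valued component of the tuple whose joint law is shown to be tight. Its marginal law is fixed, so tightness of that component is automatic. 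The $\mathcal F_0$-measurability of $u_0$ makes the initial term $G_t*u_0$ independent of the noise increments, so the identification of the stochastic integral in the limit goes through exactly as in \cite{HW2022}. The uniform bound survives the limit by Fatou's lemma.

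\textbf{Proof of (i).} Fix $t\in[T_0,T]$ and $x\in\R$, and split the mild formula into its two terms.
\begin{itemize}
\item The deterministic term satisfies $\lambda(x)^{1/q}\norm{(G_t*u_0)(x)}_p\le C\norm{u_0}_{L^q_\lambda}$ by Lemma~\ref{lem:initial-smoothing}(ii) with $j=0$. Since $\lambda$ is a probability weight, $L^{q_0}_\lambda\subset L^q_\lambda$ for $q\le q_0$.
\item For the stochastic term, I may take $p\ge2$ (smaller $p$ follows by monotonicity) and apply \eqref{eq:BDG-product} with $g=G$ and $\xi=\sigma(u)$.
\end{itemize}
Globally Lipschitz $\sigma$ gives $\norm{\sigma(u(s,y))}_p\le C(1+\norm{u(s,y)}_p)$ and $\mathcal N_{1/2-H,p}\sigma(u)\le C\mathcal N_{1/2-H,p}u$. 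This produces two integrals. The first involves the heat-kernel fractional difference against $1+\norm{u(s,y)}_p^2$. The second involves $G_{t-s}^2$ against $[\mathcal N_{1/2-H,p}u(s,y)]^2$. I would handle each by Hölder's inequality in $y$ with exponents $q/2$ and $q/(q-2)$, putting the weight $\lambda(y)$ on the $u$-factor; this controls that factor by $\norm{u(s)}_{L^q_\lambda}$. The factor $\lambda(y)^{-2/q}$ is moved to the kernel side and converted to $\lambda(x)^{-2/q}(1+|x-y|)^{4(1-H)/q}$ by \eqref{eq:weight-ratio}.

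The second integral is the main obstacle. Hölder's inequality must be applied inside the $h$-integral, using Minkowski's inequality to exchange the $L^{q/2}(\dd y)$ norm with the $h$-integral. This is what produces $\mathcal N^*_{1/2-H,q}u(s)$ rather than a pointwise quantity. What remains is a time-singular factor of the form
\begin{equation*}
\int_0^t \Bigl(\int_\R G_{t-s}(z)^{2q/(q-2)}(1+|z|)^{c}\,\dd z\Bigr)^{(q-2)/q}\dd s,
\end{equation*}
with $c=4(1-H)/(q-2)$, and similarly for the fractional-difference kernel. I expect these to be finite exactly when $q>6/(4H-1)$. This is where the lower threshold on $q$ comes from, and I would verify it by scaling $z=\sqrt{t-s}\,w$.

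\textbf{Proof of (ii).} Apply $\mathcal N_{1/2-H,p}$ to the mild formula.
\begin{itemize}
\item For the initial term, bound $\norm{G_t*u_0(x+h)-G_t*u_0(x)}_p$ by $|h|\sup\norm{\partial_x(G_t*u_0)}_p$ for $|h|\le1$, and by twice the sup norm for $|h|>1$. Both come from Lemma~\ref{lem:initial-smoothing}(ii) with $j=0,1$, using $\lambda(x+h)^{-1/q}\le C\lambda(x)^{-1/q}(1+|h|)^{2(1-H)/q}$. The resulting $h$-integral converges because $4(1-H)/q+2H-2<-1$, which is equivalent to $q>4(1-H)/(1-2H)$.
\item For the stochastic term, apply the BDG bound to the difference kernel $G_{t-s}(x+h-\cdot)-G_{t-s}(x-\cdot)$. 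This yields second-order (rectangular) increments of $G$ and first differences multiplied by $\mathcal N u$. These would be handled by the same Hölder-plus-weight-ratio scheme as in (i), using the weighted heat-kernel increment estimates from the Appendix.
\end{itemize}
The same growth constraint in $|h|$ appears again here, which explains the second threshold in $q_H$.
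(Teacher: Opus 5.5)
Your proposal is correct and follows the paper's route: existence by the Hu--Wang smoothing/tightness/Skorokhod scheme with $u_0$ carried along as an $L^{q_0}_\lambda$-valued component, and the pointwise bounds \eqref{eq:umom}--\eqref{eq:ufrac} obtained from Lemma~\ref{lem:initial-smoothing}(ii) for the initial term, and from BDG, H\"older in $y$ against the $\mathcal Z^{q}_{\lambda,T}$ norm, and \eqref{eq:weight-ratio} for the stochastic term (derived directly from the mild equation, hence valid for every weak solution with finite $\mathcal Z^{q_0}_{\lambda,T}$ norm). Only your remarks about where the thresholds come from are off, and harmlessly so, since you only need sufficiency: after scaling, the time-singular factors you isolate are already finite for $q>\max\{2,1/H\}$ in (i) and for $q>2/(4H-1)$ in (ii), and in your H\"older scheme the kernel decay absorbs the weight growth (the exponents combine to $2H-2<-1$), so $q>6/(4H-1)$ is simply inherited from the existence theory and $q>4(1-H)/(1-2H)$ is needed only for the initial-data term.
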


 Throughout the paper, $u=\{u(t,x):t\ge0,\ x\in\mathbb R\}$
denotes an arbitrary weak  
 solution to \eqref{eq:SHE} under Condition~\ref{cond:A}
satisfying $\|u\|_{\mathcal Z_{\lambda,T}^{q_0}}<\infty$
for every $T>0$. We work on its underlying filtered probability space and denote
its initial datum and driving noise by $u_0$ and $W$, respectively. 
On the same space, we take $U$ to be the unique mild solution
to the additive equation \eqref{eq:linearSHE}, driven by the same noise $W$.
We use modifications of both $u$ and $U$ that are continuous on $(0,\infty)\times\mathbb R$.

\subsection{H\"older continuity of the solution}
The following Gaussian increment estimate is given in
\cite[Theorem~1.1]{BJQ2016}; see also
\cite[(5.6)]{Qian2026}.

\begin{lemma}\label{lem:U-increment}
For every finite \(p\ge1\),  there exists a constant $C_p>0$ such that for every 
 $s,t\ge0$ and  $x,y\in\R$,
\begin{equation}\label{eq:U-increment-Lp}
 \np{U(t,x)-U(s,y)}p
 \le C_p\bigl(|t-s|^{H/2}+|x-y|^H\bigr).
\end{equation}
\end{lemma}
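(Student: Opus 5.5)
The statement immediately above is Lemma~\ref{lem:U-increment}, the Gaussian increment bound for $U$. Since $U$ is a centered Gaussian field, all $L^p$ norms of its increments are comparable to the $L^2$ norm: $\np{U(t,x)-U(s,y)}{p}\le C_p\np{U(t,x)-U(s,y)}{2}$. It therefore suffices to prove the case $p=2$. By the triangle inequality, I would split the increment into a temporal part $U(t,x)-U(s,x)$ and a spatial part $U(s,x)-U(s,y)$, and estimate each with the isometry \eqref{eq:stochastic-isometry} in its Fourier form. Recall $\widehat{G_r}(\xi)=e^{-r\xi^2}$ and $\mu(\dd\xi)=c_{1,1}|\xi|^{1-2H}\dd\xi$.

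\emph{Spatial part.} Using the isometry,
\[
\E|U(s,x)-U(s,y)|^2=c_{1,1}\int_0^s\int_\R e^{-2r\xi^2}\,|1-e^{i\xi(x-y)}|^2|\xi|^{1-2H}\,\dd\xi\,\dd r .
\]
I would integrate in $r$ first, using $\int_0^s e^{-2r\xi^2}\dd r\le (2\xi^2)^{-1}$. This leaves $\int_\R|1-e^{i\xi h}|^2|\xi|^{-1-2H}\dd\xi$ with $h=x-y$. The substitution $\xi\mapsto\xi/|h|$ turns it into $C|h|^{2H}$. The resulting integral converges for $H\in(0,1)$: near $0$ because $|1-e^{i\xi}|^2\sim\xi^2$, and at infinity because $|1-e^{i\xi}|^2$ is bounded.

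\emph{Temporal part.} Take $s<t$ and set $h=t-s$. I would split the increment as
\[
\int_s^t\!\int_\R G_{t-r}(x-z)\,W(\dd r,\dd z)+\int_0^s\!\int_\R\bigl[G_{t-r}-G_{s-r}\bigr](x-z)\,W(\dd r,\dd z).
\]
The two pieces are orthogonal. The first has second moment
\[
C\int_0^h\int_\R e^{-2r\xi^2}|\xi|^{1-2H}\,\dd\xi\,\dd r=C\int_0^h r^{-1+H}\,\dd r=Ch^{H}.
\]
The second has second moment
\[
C\int_\R (1-e^{-h\xi^2})^2|\xi|^{1-2H}\int_0^s e^{-2(s-r)\xi^2}\dd r\,\dd\xi\le C\int_\R(1-e^{-h\xi^2})^2|\xi|^{-1-2H}\,\dd\xi .
\]
Scaling $\xi\mapsto\xi/\sqrt h$ shows this equals $Ch^{H}$. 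The integral converges since $(1-e^{-\xi^2})^2\sim\xi^4$ near $0$. Taking square roots of both pieces gives the bound $Ch^{H/2}$ for the temporal part.

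There is no real obstacle. The only points needing care are justifying the Fourier form of the isometry for the deterministic integrands $G$, which belong to $\mathcal H$ for each $r$, and checking convergence of the scaled integrals, which requires $0<H<1$. Both hold in the regime $H\in(1/4,1/2)$.
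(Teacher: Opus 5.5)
Your argument is correct and complete. The paper does not prove this lemma; it cites \cite[Theorem~1.1]{BJQ2016} and \cite[(5.6)]{Qian2026}. Your Fourier computation is therefore a self-contained replacement for that citation rather than a variant of an argument in the text.

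It is consistent with computations the paper does elsewhere. Your spatial bound is the energy identity of Lemma~\ref{lem:energy} truncated to $r\in[0,s]$. Using the normalization $c_{1,1}\int_\R|e^{iu}-1|^2|u|^{-1-2H}\,\dd u=1$, it gives $\E|U(s,x)-U(s,y)|^2\le\frac12|x-y|^{2H}$. This constant matches the factor $1/\sqrt2$ in Lemma~\ref{lem:completion}(i). Your temporal split into pieces over $[s,t]$ and $[0,s]$ is the $A$/$B$ decomposition from the proof of Proposition~\ref{prop:regularity}. The difference is that you evaluate it in Fourier variables, whereas the paper uses weighted physical-space kernel bounds.

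What your route buys is transparency and explicit constants, with Gaussian moment equivalence reducing everything to $p=2$; the citation buys brevity. The two technical points you flag are indeed the only things left to check:
\begin{itemize}
\item the Fourier form of the $\mathcal H$-norm for the Schwartz functions $G_r(x-\cdot)$, which follows by approximation from $\mathcal D(\R)$;
\item finiteness of $\int_0^t\|G_{t-r}(x-\cdot)\|_{\mathcal H}^2\,\dd r$, which your own computation shows equals $Ct^H$.
\end{itemize}
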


For $r>0$ and $z,h\in\R$, denote the spatial
heat-kernel increment by
\begin{equation}\label{eq:Dr}
D_r(z,h):=G_r(z+h)-G_r(z).
\end{equation}

Under the present assumptions, the stochastic-convolution
arguments of Hu and Wang~\cite[Section~4.2]{HW2022}
can be adapted to obtain the following pointwise
$L^p(\Omega)$ estimates.
For completeness, we give a detailed proof that makes
the temporal and spatial H\"older exponents explicit.
\begin{proposition}\label{prop:regularity}
Fix $0<T_0<T<\infty$. For every $q_H<q\le q_0$ and $1\le p\le q$, there exists a constant  $C>0$ such that
\begin{equation}\label{eq:exact-holder}
\norm{u(t,x)-u(s,y)}_{p}
\le
C\left(\lambda(x)^{-1/q}+\lambda(y)^{-1/q}\right)
\left(|t-s|^{H/2}+|x-y|^H\right),
\end{equation}
for all $s,t\in[T_0,T]$ and $x,y\in\R$.
\end{proposition}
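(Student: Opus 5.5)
We split the increment along the mild formulation \eqref{eq solution nonlinear} as $u(t,x)-u(s,y)=[u(t,x)-u(t,y)]+[u(t,y)-u(s,y)]$. It therefore suffices to prove two estimates. The spatial one is $\np{u(t,x)-u(t,y)}p\le C(\lambda(x)^{-1/q}+\lambda(y)^{-1/q})|x-y|^H$. The temporal one, for $s<t$ in $[T_0,T]$, is $\np{u(t,y)-u(s,y)}p\le C\lambda(y)^{-1/q}|t-s|^{H/2}$. Since $p\le q$ and $L^p(\Omega)$ norms are monotone, we may assume $2\le p\le q$. Each increment splits into a deterministic initial-data part and a stochastic-convolution part.

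\textbf{Initial data.} For the spatial increment we use the mean value theorem together with Lemma~\ref{lem:initial-smoothing}(ii) with $j=1$. When $|x-y|\le1$, \eqref{eq:weight-ratio} shows that $\lambda$ is comparable along the segment, so we get a bound $C\lambda(x)^{-1/q}|x-y|\le C\lambda(x)^{-1/q}|x-y|^H$. When $|x-y|>1$, we bound each term separately by $C\lambda(\cdot)^{-1/q}$. For the temporal increment, we write $G_t*u_0-G_s*u_0=\int_s^t\partial_x^2(G_r*u_0)\,\dd r$ and apply the $j=2$ case. This gives $C\lambda(y)^{-1/q}|t-s|\le C\lambda(y)^{-1/q}|t-s|^{H/2}$ on $[T_0,T]$.

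\textbf{Stochastic convolution.} We apply \eqref{eq:BDG-product} with $\xi=\sigma(u)$. Since $\sigma$ is Lipschitz, $\norm{\xi(s,z)}_p\le C(1+\norm{u(s,z)}_p)$ and $\mathcal N_{1/2-H,p}\xi\le C\mathcal N_{1/2-H,p}u$. Proposition~\ref{prop:solution} then bounds both quantities by $C\lambda(z)^{-1/q}$ for $s\ge T_0$.

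The difficulty is the range $s\in(0,T_0)$, where only the integrated bounds in $\norm{u}_{\mathcal Z^{q}_{\lambda,T}}$ are available. There we use Hölder's inequality in $z$ against the weight, in the style of the proof of Lemma~\ref{lem:initial-smoothing}. The quantities to handle are the quadratic forms $\int|g_r(x-z-h)-g_r(x-z)|^2 F(z)|h|^{2H-2}$ with $F(z)=\norm{u(r,z)}_p^2$, and $\int |g_r(x-z)|^2\bigl[\mathcal N u(r,z)\bigr]^2$. Writing $F^{q/2}\lambda\in L^1$ and applying Hölder with exponents $q/2$ and $q/(q-2)$, we reduce to weighted heat-kernel integrals of the form $\int |D_r(w,h)|^{2q/(q-2)}(1+|w|)^{4(1-H)/(q-2)}\,\dd w$ and their $h$-integrals. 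These produce a factor $\lambda(x)^{-2/q}$. The condition $q>q_H$, in particular $q>4(1-H)/(1-2H)$, is what makes the resulting powers of $r$ integrable near $0$. I expect this step to be the main obstacle. I would borrow the weighted kernel bounds from \cite[Section~4.2]{HW2022}, as the paper indicates.

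\textbf{Kernel increments.} With these bounds in hand, the spatial stochastic term reduces to the standard kernel estimates
\[
\int_0^t\!\int_{\R^2}|D_r(w+a,h)-D_r(w,h)|^2|h|^{2H-2}\,\dd w\,\dd h\,\dd r\le C|a|^{2H},
\qquad
\int_0^t\!\int_\R |G_r(w+a)-G_r(w)|^2\,\dd w\,\dd r\le C|a|.
\]
Both follow by Plancherel from $\int_0^t\int(1-\cos a\xi)e^{-2r\xi^2}|\xi|^{1-2H}\,\dd\xi\,\dd r\le C|a|^{2H}$, and the second term is even better since $|a|\le C|a|^{2H}$ for small $a$. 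The temporal term splits into $\int_s^t G_{t-r}\cdots$ and $\int_0^s(G_{t-r}-G_{s-r})\cdots$. The Fourier computation gives $|t-s|^{H}$ for the squared norms in both cases, which is the claimed $|t-s|^{H/2}$ after taking square roots.
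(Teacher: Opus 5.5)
\textbf{There is a genuine gap, and it lies in the step you treat as routine.} Once the pointwise bounds $\norm{\sigma(u(r,z))}_p+\mathcal N_{1/2-H,p}\sigma(u(r,\cdot))(z)\le C\lambda(z)^{-1/q}$ are inserted into \eqref{eq:BDG-product}, you cannot pull out $\lambda(x)^{-2/q}$ and finish with unweighted Plancherel computations. By \eqref{eq:weight-ratio} you only have $\lambda(z)^{-2/q}\le C\lambda(x)^{-2/q}(1+|x-z|)^{a_q}$ with $a_q=4(1-H)/q$. So for the spatial increment you must prove
\[
\int_0^\infty\int_{\R^2}|D_r(w+h,\e)-D_r(w,\e)|^2|h|^{2H-2}(1+|w|)^{a_q}\,\dd h\,\dd w\,\dd r\le C\e^{2H},
\]
together with analogous weighted bounds for the temporal kernels.

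A polynomial weight does not pass through the Fourier transform. Moreover, the fractional difference is nonlocal, so the $h$-integrated kernel decays only like $|w|^{2H-2}$ in space (Lemma~\ref{lem:schwartz}, \eqref{eq:FH-pointwise}). Hence the weighted integral is finite only when $a_q<1-2H$. This spatial-tail condition, not integrability of powers of $r$ near $0$, is where $q>4(1-H)/(1-2H)$ is used. The paper obtains these bounds by scaling combined with the tail estimate for $F_H$; see Lemmas~\ref{lem:tails}, \ref{lem:weightedenergy}, \ref{lem:singleenergy}(i), and Lemma~\ref{t:lem:kernels}. Your plan has no substitute for them, and they are the actual core of the proposition.

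\textbf{The placement of your time split also causes trouble.} Proposition~\ref{prop:solution} holds with any positive lower time, so the pointwise bounds are available on $[T_0/2,T]$, not only on $[T_0,T]$. Treating all of $s\in(0,T_0)$ by H\"older against $\lambda$ is harmful when $t$ is close to $T_0$, because that range then contains kernel times $t-s\to0$. There, H\"older replaces the kernel energies by $L^{q/(q-2)}$-norms. These scale like $r^{H-1-1/q}$ and $r^{-1/2-1/q}$ rather than $r^{H-1}$ and $r^{-1/2}$. After integration in $r$ this gives only $|x-y|^{H-1/q}$ and $|t-s|^{H/2-1/(2q)}$, not the claimed exponents.

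The paper instead splits at $T_0/2$. By Lemma~\ref{lem:semigroup-fubini}, the early piece equals $G_{t-T_0/2}*f$, where $f=u(T_0/2,\cdot)-G_{T_0/2}*u_0$ satisfies $\norm{f(z)}_p\le C\lambda(z)^{-1/q}$. Its kernel time is at least $T_0/2$, so this piece is Lipschitz in $(t,x)$ with the correct weight. BDG is then needed only on $[T_0/2,t]$, where the pointwise bounds hold and the weighted kernel estimates above do all the work.
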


We first record the following convolution identity,
which follows from stochastic Fubini's theorem.

\begin{lemma}\label{lem:semigroup-fubini}
Fix $r_0>0$ and $q_H<q\le q_0$, and let $k$ be
a Schwartz function on $\R$. Then, for every fixed $x\in\R$,
we have almost surely
$$
\begin{aligned}
&\int_\R k(z)
\bigl[u(r_0,x-z)-(G_{r_0}*u_0)(x-z)\bigr]\dd z\\
 =&\, 
\int_0^{r_0}\int_\R
(k*G_{r_0-s})(x-y)\sigma(u(s,y))W(\dd s,\dd y).
\end{aligned}
$$
\end{lemma}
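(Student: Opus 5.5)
The plan is to start from the mild formulation \eqref{eq solution nonlinear} evaluated at time $r_0$ and spatial point $x-z$, multiply by $k(z)$, and integrate in $z$. The deterministic part gives exactly $\int_\R k(z)(G_{r_0}*u_0)(x-z)\,\dd z$, which is subtracted on the left. What remains is to justify interchanging the $\dd z$ integral with the stochastic integral:
$$
\int_\R k(z)\int_0^{r_0}\!\!\int_\R G_{r_0-s}(x-z-y)\sigma(u(s,y))\,W(\dd s,\dd y)\,\dd z
=\int_0^{r_0}\!\!\int_\R (k*G_{r_0-s})(x-y)\sigma(u(s,y))\,W(\dd s,\dd y).
$$
Once this interchange holds, the identity follows from $\int_\R k(z)G_{r_0-s}(x-z-y)\,\dd z=(k*G_{r_0-s})(x-y)$.

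To justify the interchange, I would use a stochastic Fubini theorem for the $\mathcal H$-cylindrical Wiener process, in the form for Hilbert-space-valued integrands. View $z\mapsto g_z(s,\cdot):=k(z)G_{r_0-s}(x-z-\cdot)\sigma(u(s,\cdot))$ as a family of predictable $\mathcal H$-valued integrands. The sufficient condition to verify is
$$
\int_\R \Bigl(\E\int_0^{r_0}\|g_z(s)\|_{\mathcal H}^2\,\dd s\Bigr)^{1/2}\dd z<\infty,
$$
which makes the Bochner integral $\int g_z\,\dd z$ belong to $\Lambda_H$ and commute with the stochastic integral. A clean way to carry this out is to approximate. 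Truncate $k$ to $k\mathbf 1_{[-N,N]}$ and replace the $\dd z$ integral by Riemann sums. For finite sums the interchange is just linearity of the stochastic integral. Then pass to the limit in $L^2(\Omega)$ using the isometry \eqref{eq:stochastic-isometry} on the right-hand side, and Minkowski's inequality on the left-hand side.

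The required norm bounds come from the BDG-product estimate \eqref{eq:BDG-product} with $p=2$ and $\xi=\sigma(u)$. This gives
$$
\E\int_0^{r_0}\|g_z(s)\|_{\mathcal H}^2\,\dd s
\le C k(z)^2\Bigl[\int_0^{r_0}\!\!\int_{\R^2}|D_{r_0-s}(x-z-y,h)|^2\|\sigma(u(s,y))\|_2^2|h|^{2H-2}+\int_0^{r_0}\!\!\int_\R G_{r_0-s}^2\,[\mathcal N_{1/2-H,2}\sigma(u)(s,y)]^2\Bigr].
$$
Since $\sigma$ is Lipschitz, $|\sigma(u)|\le C(1+|u|)$ and $\mathcal N\sigma(u)\le C\mathcal N u$. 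Proposition~\ref{prop:solution} then yields $\|\sigma(u(s,y))\|_2+\mathcal N_{1/2-H,2}\sigma(u)(s,y)\le C\lambda(y)^{-1/q}$. By the weight ratio \eqref{eq:weight-ratio}, this is at most $C\lambda(x-z)^{-1/q}(1+|x-z-y|)^{2(1-H)/q}$. The remaining heat-kernel integrals are the standard ones from the well-posedness theory and are finite, with a polynomial growth factor in $|x-z|$. That growth is killed by the Schwartz decay of $k(z)$, so the $\dd z$ integrability condition holds.

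The main obstacle is the behaviour near $s=0$. Proposition~\ref{prop:solution} gives pointwise bounds only for $t\ge T_0>0$. On $[0,T_0]$ I would instead use the integrated $\mathcal Z^{q_0}_{\lambda,T}$-norm bound. With $\|u\|_{\mathcal Z^{q_0}_{\lambda,T}}<\infty$, the integrated weighted estimates of Hu--Wang show that the integrand $G_{r_0-s}(x-\cdot)\sigma(u(s,\cdot))$ lies in $\Lambda_H$. The same computation, with an extra $\dd z$ integral against the rapidly decaying $|k(z)|$, then controls the small-$s$ contribution. The singularity at $s\to r_0$ is handled exactly as in the existence proof, since $q>q_H$.
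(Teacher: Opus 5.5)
Your framework matches the paper's. Both of you reduce the lemma to the stochastic Fubini condition $\int_\R |k(z)|\bigl(\E\int_0^{r_0}\|G_{r_0-s}(x-z-\cdot)\sigma(u(s,\cdot))\|_{\mathcal H}^2\,\dd s\bigr)^{1/2}\dd z<\infty$. The difference is in how you verify it.

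You bound the $\mathcal H$-norm through the splitting behind \eqref{eq:BDG-product}. This needs pointwise bounds on $\|\sigma(u(s,\cdot))\|_2$ and $\mathcal N_{1/2-H,2}\sigma(u(s,\cdot))$ for all $s\in(0,r_0]$. It also needs weighted heat-kernel estimates near $s=r_0$ and a separate argument near $s=0$. That last step is only sketched, but it can be completed. For $s\le r_0/2$ the kernels $G_{r_0-s}$ and $Q_HG_{r_0-s}\le C(1+|\cdot|)^{2H-2}$ are harmless. H\"older's inequality against $\sup_s\|u(s)\|_{L^q_\lambda(\Omega\times\R)}$ and $\sup_s\mathcal N^*_{1/2-H,q}u(s)$ then gives a bound $C\lambda(x-z)^{-2/q}$.

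The small-$s$ difficulty is self-inflicted, however. The paper observes that, by the isometry \eqref{eq:stochastic-isometry} and the mild equation \eqref{eq solution nonlinear} at $(r_0,x-z)$,
$$\E\int_0^{r_0}\|G_{r_0-s}(x-z-\cdot)\sigma(u(s,\cdot))\|_{\mathcal H}^2\,\dd s=\|u(r_0,x-z)-(G_{r_0}*u_0)(x-z)\|_2^2\le C\lambda(x-z)^{-2/q}.$$
The inequality uses only \eqref{eq:umom} and Lemma~\ref{lem:initial-smoothing}(ii) at the single time $r_0$. The weight ratio \eqref{eq:weight-ratio} and the decay of $k$ then finish the argument. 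This handles both ends of the time interval at once and needs neither \eqref{eq:ufrac} nor any kernel estimates.

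Your route is the one you would need for an integrand not identified with a known solution, such as a different kernel or a time-truncated stochastic integral. For this lemma it is a detour.
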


\begin{proof}
Fix $x\in\R$.
By \eqref{eq:umom} and Lemma~\ref{lem:initial-smoothing}(ii),
$$
\norm{u(r_0,x-z)-(G_{r_0}*u_0)(x-z)}_2\le C\lambda(x-z)^{-1/q}.
$$
Using \eqref{eq:stochastic-isometry}, \eqref{eq solution nonlinear},  
and  
Lemma~\ref{lem:initial-smoothing}(i), we have 
$$
\begin{aligned}
&\int_\R |k(z)|
\left(
\E\int_0^{r_0}
\norm{
G_{r_0-s}(x-z-\cdot)\sigma(u(s,\cdot))
}_{\mathcal H}^2\dd s
\right)^{1/2}\dd z\\
 = &\,
\int_\R |k(z)|\norm{u(r_0,x-z)-(G_{r_0}*u_0)(x-z)}_2\dd z\\
 \le&\,
C\lambda(x)^{-1/q}
\int_\R |k(z)|(1+|z|)^{2(1-H)/q}\dd z
<\infty,
\end{aligned}
$$
where finiteness follows from the rapid decay of $k$.
Thus the convolution on the left-hand side
is well defined in $L^2(\Omega)$, and stochastic Fubini's theorem
yields the stated identity.
\end{proof}
 
 \begin{proof}[Proof of Proposition~\ref{prop:regularity}]
It suffices to prove the estimates for $2\le p\le q$,
since the bounds for lower moments then follow from Jensen's inequality.
Fix $s,t\in[T_0,T]$ and $x,y\in\R$.
By symmetry, we may assume $s\le t$.
Splitting the stochastic convolution at time $T_0/2$, we write
\begin{equation}\label{eq:initial-sharp}
u(t,x)=(G_t*u_0)(x)+I(t,x)+J(t,x),
\end{equation}
where
$$
\begin{aligned}
I(t,x):=&\,
\int_0^{T_0/2}\int_\R
G_{t-r}(x-z)\sigma(u(r,z))W(\dd r,\dd z),\\
J(t,x):=&\,
\int_{T_0/2}^t\int_\R
G_{t-r}(x-z)\sigma(u(r,z))W(\dd r,\dd z).
\end{aligned}
$$
We estimate the increments of these three terms separately.

Since $\partial_tG_t=\partial_x^2G_t$, the fundamental theorem
of calculus, Minkowski's inequality, and
Lemma~\ref{lem:initial-smoothing}(ii) yield
\begin{equation}\label{eq:initial-sharp-expanded}
\begin{aligned}
&\norm{(G_t*u_0)(x)-(G_s*u_0)(y)}_p\\
\le&\,
\int_s^t
\norm{\partial_x^2(G_r*u_0)(x)}_p\dd r
+|x-y|\int_0^1
\norm{\partial_x(G_s*u_0)(y+\theta(x-y))}_p\dd\theta\\
\le&\,
C\lambda(x)^{-1/q}|t-s|
+
C|x-y|\int_0^1
\lambda(y+\theta(x-y))^{-1/q}\dd\theta\\
\le&\,
C\bigl(\lambda(x)^{-1/q}+\lambda(y)^{-1/q}\bigr)
\bigl(|t-s|+|x-y|\bigr).
\end{aligned}
\end{equation}

Set
$$
f(z):=u(T_0/2,z)-(G_{T_0/2}*u_0)(z).
$$
By Lemma~\ref{lem:semigroup-fubini},
$$
I(t,x)-I(s,y)
=
(G_{t-T_0/2}*f)(x)-(G_{s-T_0/2}*f)(y).
$$
Moreover, \eqref{eq:msmooth} and \eqref{eq:umom} imply
$$
\norm{f(z)}_p\le C\lambda(z)^{-1/q},
\qquad z\in\R.
$$
By Minkowski's inequality, the preceding bound on $f$,
\eqref{eq:weight-ratio}, and the Gaussian decay of the
heat-kernel derivatives, we obtain
$$
\begin{aligned}
 \norm{(\partial_aG_a*f)(v)}_p
+\norm{(G_a'*f)(v)}_p 
\le&\,
\int_\R
\bigl(
|\partial_aG_a(v-z)|+|G_a'(v-z)|
\bigr)\norm{f(z)}_p\,\dd z\\
\le&\, C\lambda(v)^{-1/q},
\end{aligned}
$$
uniformly for $a\in[T_0/2,T]$ and $v\in\R$.
Here $G_a'$ denotes the spatial derivative of $G_a$.
Applying the fundamental theorem of calculus to the heat kernels,
as in \eqref{eq:initial-sharp-expanded}, yields
\begin{equation}\label{eq:early-stochastic-sharp}
\norm{I(t,x)-I(s,y)}_p
\le
C\bigl(\lambda(x)^{-1/q}+\lambda(y)^{-1/q}\bigr)
\bigl(|t-s|+|x-y|\bigr).
\end{equation}

For $t\in[T_0,T]$ and $0<|h|\le1$, applying
\eqref{eq:BDG-product} to the spatial increment of $J$ gives
$$
\begin{aligned}
&\norm{J(t,x+h)-J(t,x)}_p^2\\
\le&\,
C\int_{T_0/2}^t\int_{\R^2}
|D_{t-r}(x-z-\eta,h)-D_{t-r}(x-z,h)|^2
\norm{\sigma(u(r,z))}_p^2
|\eta|^{2H-2}\dd z\dd\eta\dd r\\
&+C\int_{T_0/2}^t\int_\R
|D_{t-r}(x-z,h)|^2
\bigl[\mathcal N_{1/2-H,p}\sigma(u(r,\cdot))(z)\bigr]^2
\dd z\dd r.
\end{aligned}
$$
The Lipschitz continuity of $\sigma$, together with
\eqref{eq:umom} and \eqref{eq:ufrac}, yields
\begin{equation}\label{eq:sigma-bounds}
\norm{\sigma(u(r,z))}_p
+
\mathcal N_{1/2-H,p}\sigma(u(r,\cdot))(z)
\le C\lambda(z)^{-1/q},
\end{equation}
uniformly for $r\in[T_0/2,T]$.

For $q>q_H$, write 
\begin{align}\label{eq a}
    a_q:=4(1-H)/q.
\end{align} 
Clearly,   $a_q<1-2H$.  Lemmas~\ref{lem:weightedenergy}
and \ref{lem:singleenergy}(i), together with
\eqref{eq:weight-ratio} and \eqref{eq:sigma-bounds}, yield
\begin{equation}\label{eq:J-space-sharp}
\norm{J(t,x+h)-J(t,x)}_p^2
\le C\lambda(x)^{-2/q}\bigl(|h|^{2H}+|h|\bigr)
\le C\lambda(x)^{-2/q}|h|^{2H},
\end{equation}
where the last inequality uses $0<|h|\le1$ and $2H<1$.

To estimate the temporal increment of $J$, suppose $s<t$
and $t-s\le1$. Write
$$
J(t,x)-J(s,x)=A(x)+B(x),
$$
where
$$
\begin{aligned}
A(x):=&\,
\int_s^t\int_\R
G_{t-r}(x-z)\sigma(u(r,z))W(\dd r,\dd z),\\
B(x):=&\,
\int_{T_0/2}^s\int_\R
\bigl[G_{t-r}(x-z)-G_{s-r}(x-z)\bigr]
\sigma(u(r,z))W(\dd r,\dd z).
\end{aligned}
$$
Using \eqref{eq:BDG-product}, \eqref{eq:sigma-bounds},
\eqref{t:eq:kernel-L2}, and  \eqref{t:eq:kernel-weighted}
with $\vartheta=0$, we obtain
\begin{equation}\label{eq:A-temporal}
\begin{aligned}
\norm{A(x)}_p^2
\le&\,
C\int_s^t\int_{\R^2}
|D_{t-r}(x-z,h)|^2\lambda(z)^{-2/q}
|h|^{2H-2}\dd z\dd h\dd r\\
&+C\int_s^t\int_\R
G_{t-r}(x-z)^2\lambda(z)^{-2/q}\dd z\dd r\\
\le&\,
C\lambda(x)^{-2/q}\int_0^{t-s}
\bigl(r^{H-1}+r^{-1/2}\bigr)\dd r\\
\le&\,
C\lambda(x)^{-2/q}(t-s)^H.
\end{aligned}
\end{equation}

Similarly, \eqref{eq:BDG-product} and \eqref{eq:sigma-bounds} yield
$$
\begin{aligned}
\norm{B(x)}_p^2
\le&\,
C\int_{T_0/2}^s\int_{\R^2}
|D_{t-r}(x-z,h)-D_{s-r}(x-z,h)|^2
\lambda(z)^{-2/q}|h|^{2H-2}\dd z\dd h\dd r\\
&+C\int_{T_0/2}^s\int_\R
|G_{t-r}(x-z)-G_{s-r}(x-z)|^2
\lambda(z)^{-2/q}\dd z\dd r.
\end{aligned}
$$
With $v=s-r$, the triangle inequality and the kernel estimates
\eqref{t:eq:kernel-L2}-\eqref{t:eq:kernel-time-H} give
\begin{equation}\label{eq:B-temporal}
\begin{aligned}
\norm{B(x)}_p^2
\le&\,
C\lambda(x)^{-2/q}\int_0^T
\bigl(v^{H-1}+v^{-1/2}\bigr)
\min\left\{1,\frac{(t-s)^2}{v^2}\right\}\dd v\\
\le&\,
C\lambda(x)^{-2/q}
\bigl((t-s)^H+(t-s)^{1/2}\bigr)\\
\le&\,
C\lambda(x)^{-2/q}(t-s)^H,
\end{aligned}
\end{equation}
where the second inequality follows by splitting the integral
at $v=t-s$.

Combining \eqref{eq:A-temporal} and \eqref{eq:B-temporal} yields
\begin{equation}\label{eq:J-time-sharp}
\norm{J(t,x)-J(s,x)}_p
\le C\lambda(x)^{-1/q}|t-s|^{H/2}.
\end{equation}
Taking square roots in \eqref{eq:J-space-sharp} and using
\eqref{eq:J-time-sharp}, we obtain, for
$|t-s|\le1$ and $|x-y|\le1$,
$$
\begin{aligned}
\norm{J(t,x)-J(s,y)}_p
\le&\,
\norm{J(t,x)-J(s,x)}_p
+\norm{J(s,x)-J(s,y)}_p\\
\le&\,
C\lambda(x)^{-1/q}|t-s|^{H/2}
+C\lambda(y)^{-1/q}|x-y|^H.
\end{aligned}
$$
Together with the decomposition \eqref{eq:initial-sharp}
and the estimates \eqref{eq:initial-sharp-expanded}
and \eqref{eq:early-stochastic-sharp}, this proves
\eqref{eq:exact-holder} in this case.
If either $|t-s|>1$ or $|x-y|>1$, the conclusion follows
directly from \eqref{eq:umom}.
\end{proof}


\section{Temporal local linearization}\label{sec:temporal}
  
\subsection{Decomposition and estimates for temporal increments}
\label{sec:temporal-errors}

Fix $0<T_0<T<\infty$ and $0<\beta<1$.
For $t\in[T_0,T]$, $x\in\R$, and $\e>0$ with $\e^\beta<t$,
the mild formulation gives
\begin{equation}\label{eq:Jdecomp}
\begin{aligned}
&u(t+\e,x)-u(t,x)
-\bigl[(G_{t+\e}*u_0)(x)-(G_t*u_0)(x)\bigr]\\
=&\, J_1+J_{2,\beta}+J_{3,\beta},
\end{aligned}
\end{equation}
where
$$
\begin{aligned}
J_1:=&\,
\int_t^{t+\e}\int_\R
G_{t+\e-s}(x-y)\sigma(u(s,y))W(\dd s,\dd y),\\
J_{2,\beta}:=&\,
\int_0^{t-\e^\beta}\int_\R
\bigl[G_{t+\e-s}(x-y)-G_{t-s}(x-y)\bigr]
\sigma(u(s,y))W(\dd s,\dd y),\\
J_{3,\beta}:=&\,
\int_{t-\e^\beta}^{t}\int_\R
\bigl[G_{t+\e-s}(x-y)-G_{t-s}(x-y)\bigr]
\sigma(u(s,y))W(\dd s,\dd y).
\end{aligned}
$$
Following~\cite{KSXZ2014,Qian2026,WangXiao2024}, we freeze the
coefficient at $(t,x)$ in $J_1$ and $J_{3,\beta}$ and estimate
$J_{2,\beta}$ separately. Define
\begin{equation}\label{eq:J1approx}
\widetilde J_1
:=\sigma(u(t,x))\int_t^{t+\e}\int_\R
G_{t+\e-s}(x-y)W(\dd s,\dd y)
\end{equation}
and
\begin{equation}\label{eq:J3approx}
\widetilde J_{3,\beta}
:=\sigma(u(t,x))\int_{t-\e^\beta}^{t}\int_\R
\bigl[G_{t+\e-s}(x-y)-G_{t-s}(x-y)\bigr]
W(\dd s,\dd y).
\end{equation}

\begin{lemma}\label{t:lem:errors}
Let $q_H<q\le q_0$, $2\le p\le q/2$, and
$0<\vartheta<\vartheta_q$.
There exists a constant $C<\infty$ such that the following
estimates hold uniformly for $t\in[T_0,T]$, $x\in\R$,
and $0<\e<1$ with $\e^\beta<T_0/4$:
\begin{align}
\norm{J_1-\widetilde J_1}_p
\le&\,
C\lambda(x)^{-1/q}\e^{H/2+\vartheta},
\label{t:eq:J1-bound}\\
\norm{J_{2,\beta}}_p
\le&\,
C\lambda(x)^{-1/q}
\bigl(\e^{1+\beta(H/2-1)}+\e^{1/4}\bigr),
\label{t:eq:J2-bound}\\
\norm{J_{3,\beta}-\widetilde J_{3,\beta}}_p
\le&\,
C\lambda(x)^{-1/q}\e^{\beta(H/2+\vartheta)}.
\label{t:eq:refreeze-bound}
\end{align}
\end{lemma}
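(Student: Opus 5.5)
All three bounds will be proved with the BDG-type product estimate \eqref{eq:BDG-product}, combined with the moment and fractional bounds \eqref{eq:sigma-bounds}, the regularity estimate of Proposition~\ref{prop:regularity}, and the weighted heat-kernel estimates of the Appendix (\eqref{t:eq:kernel-L2}--\eqref{t:eq:kernel-time-H} and Lemmas~\ref{lem:weightedenergy}, \ref{lem:singleenergy}). In each case the weight $\lambda(z)^{-2/q}$ inside the integrals is transferred to $\lambda(x)^{-2/q}$ via \eqref{eq:weight-ratio}. The resulting polynomial factor $(1+|x-z|)^{a_q}$, with $a_q=4(1-H)/q<1-2H$, is absorbed by the Gaussian decay of the kernels. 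This absorption costs part of the available time exponent, and that cost is the source of the term $(1-2H-a_q)/4$ in $\vartheta_q$.

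\emph{Bound \eqref{t:eq:J2-bound}.} I would apply \eqref{eq:BDG-product} to the kernel $G_{t+\e-s}-G_{t-s}$ on $[0,t-\e^\beta]$ and set $v=t-s\ge\e^\beta$. The kernel bounds give $\int\!\!\int|D_{v+\e}-D_v|^2\lambda^{-2/q}|h|^{2H-2}\lesssim\lambda(x)^{-2/q}v^{H-1}\min\{1,\e^2/v^2\}$, and the $L^2$ term is analogous with $v^{-1/2}$ in place of $v^{H-1}$. Integrating over $v\in[\e^\beta,T]$ gives $\e^2\e^{\beta(H-2)}+\e^2\e^{-3\beta/2}$. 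The first term yields $\e^{1+\beta(H/2-1)}$ after taking square roots. The second, after square roots and crude bookkeeping (or directly, since $J_{2,\beta}$ has spatial order at least that of the $L^2$ piece), gives at most $\e^{1/4}$. Throughout, the factor $\sigma(u)$ enters only through \eqref{eq:sigma-bounds}.

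\emph{Bounds \eqref{t:eq:J1-bound} and \eqref{t:eq:refreeze-bound}.} Here the integrand is $g_s(x-y)[\sigma(u(s,y))-\sigma(u(t,x))]$. Since $\sigma(u(t,x))$ is not $\mathcal F_s$-measurable, I would first refreeze at the initial time of the integration window: at $u(t,x)$ replaced by $u(t_*,x)$ with $t_*=t$ for $J_1$ (adapted) and $t_*=t-\e^\beta$ for $J_{3,\beta}$. The difference $[\sigma(u(t,x))-\sigma(u(t-\e^\beta,x))]\times(\text{Gaussian integral})$ is then handled by H\"older's inequality with exponent $2p\le q$. Proposition~\ref{prop:regularity} gives the factor $\e^{\beta H/2}$, and Lemma~\ref{lem:U-increment} together with the kernel bounds gives $\e^{\beta H/2}$ (in fact $\e^{H/2}$) for the Gaussian part. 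For $\sigma\ne$ linear I would rather keep an $\mathcal F_{t_*}$-measurable frozen factor; the factor pulls out of the stochastic integral because it is $\mathcal F_{t_*}$-measurable.

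For the main piece, apply \eqref{eq:BDG-product} with $\xi(s,y)=\sigma(u(s,y))-\sigma(u(t_*,x))$. The first term requires $\norm{\xi(s,y)}_p\lesssim(\lambda(x)^{-1/q}+\lambda(y)^{-1/q})(|s-t_*|^{H/2}+|x-y|^H)$, by Proposition~\ref{prop:regularity}. The second term requires $\mathcal N_{1/2-H,p}\xi(s,\cdot)(y)=\mathcal N_{1/2-H,p}\sigma(u(s,\cdot))(y)$, which is only bounded by $C\lambda(y)^{-1/q}$ with no gain. To get a gain there, I would interpolate: bound $\norm{\xi(s,y+h)-\xi(s,y)}_p$ by $\min\{C|h|^H,\,\norm{\xi(s,y+h)}_p+\norm{\xi(s,y)}_p\}$, where the second alternative is small, of order $\e^{\beta H/2}+|x-y|^H$. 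Splitting $|h|$ at a threshold and optimizing yields a gain $\e^{\vartheta}$ with $\vartheta<H/2$, which is the source of the $H/2$ in $\vartheta_q$. Multiplying by the kernel energies $\int_0^{\e}(r^{H-1}+r^{-1/2})\,dr\sim\e^H$ (respectively $\e^{\beta H}$ for $J_{3,\beta}$, using the time-difference kernel bounds) then gives $\e^{H/2+\vartheta}$ and $\e^{\beta(H/2+\vartheta)}$. The spatial factor $|x-y|^{2H}$ paired with the kernel at scale $r$ contributes $r^{H}$, an extra gain, provided moments against $(1+|x-y|)^{a_q}$ are controlled.

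\emph{Main obstacle.} The hardest step is the fractional-difference term for $\xi$. This is exactly where the nonlocal $\mathcal N_{1/2-H}$ norm gives no smallness directly, and where the weights must be kept while interpolating. I expect this interpolation, together with the weight loss $a_q$, to fix the exponent $\vartheta<\vartheta_q$.
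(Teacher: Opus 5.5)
Your skeleton matches the paper's: the BDG product estimate, freezing at $t$ for $J_1$ and at $\tau=t-\e^\beta$ for $J_{3,\beta}$, and H\"older for $[\sigma(u(\tau,x))-\sigma(u(t,x))]Z_{t,\e}$. However, the step you single out as the main obstacle is resolved incorrectly.

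\textbf{The interpolation cannot produce any gain.} The quantity $\mathcal N_{1/2-H,p}\xi(s,\cdot)(y)=\mathcal N_{1/2-H,p}\sigma(u(s,\cdot))(y)$ involves neither the frozen value nor $\e$. For nonconstant $\sigma$ it is of order one, so no bound $\lesssim\e^{\vartheta}$ can hold. Concretely, $\np{\xi(s,y+h)}p$ is of order $\e^{\beta H/2}+(|x-y-h|^H\wedge1)$, not $\e^{\beta H/2}+|x-y|^H$. It is small only for small $|h|$, where $C|h|^H$ is already at least as good. For $|h|\gtrsim1$ both alternatives are $O(1)$, so the tail $\int_{|h|>1}|h|^{2H-2}\dd h$ contributes an $O(1)$ amount.

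\textbf{The missing observation is that no gain is needed there.} In \eqref{eq:BDG-product} the seminorm multiplies $|g_s(x-y)|^2$, not the fractional difference of the kernel. For $J_1$, \eqref{t:eq:kernel-L2} bounds this term by $C\lambda(x)^{-2/q}\int_0^\e r^{-1/2}\dd r\le C\lambda(x)^{-2/q}\e^{1/2}$. For $J_{3,\beta}$, Lemma~\ref{t:lem:integrated}(ii) gives the same $\e^{1/2}$. Since $H+2\vartheta<1/2$ and $\beta<1$, this is $\le\e^{H+2\vartheta}$, resp.\ $\le\e^{\beta(H+2\vartheta)}$. By lumping both kernel energies into $\int_0^\e(r^{H-1}+r^{-1/2})\dd r\sim\e^H$, you hid the fact that the ungainful factor sits only next to $r^{-1/2}$.

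The whole gain comes from the first BDG term. It uses $\np{\xi(s,y)}p^2\le C(\lambda(x)^{-2/q}+\lambda(y)^{-2/q})((s-t)^{2\vartheta}+|x-y|^{4\vartheta})$; this weakening of Proposition~\ref{prop:regularity} is where $2\vartheta<H$ enters. It then uses \eqref{t:eq:kernel-weighted}, which needs $a_q+4\vartheta<1-2H$. That condition is needed because $Q_HG_1$ decays only like $|z|^{2H-2}$ (Lemma~\ref{lem:schwartz}), not like a Gaussian. For the same reason, your claim that the factor $|x-y|^{2H}$ ``contributes $r^H$'' fails as stated: $|z|^{2H}(1+|z|)^{2H-2}$ is not integrable for $H>1/4$. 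You must first pass to $|x-y|^{4\vartheta}$ with small $\vartheta$.

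\textbf{A second gap is in your treatment of $J_{2,\beta}$.} You apply \eqref{eq:BDG-product} on all of $[0,t-\e^\beta]$ using \eqref{eq:sigma-bounds}. But that bound, like \eqref{eq:umom} and \eqref{eq:ufrac}, is only available for times bounded away from $0$. Since $u_0$ is only assumed to lie in the integrated space $\mathcal Z^{q_0}_{\lambda,0}$, both $\lambda(y)^{1/q}\np{u(s,y)}p$ and the pointwise seminorm may blow up as $s\downarrow0$.

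The paper splits off the piece $I_1$ over $[0,T_0/2]$. It uses Lemma~\ref{lem:semigroup-fubini} to write this piece as the deterministic convolution $\int_\R[G_{t+\e-T_0/2}(z)-G_{t-T_0/2}(z)][u(T_0/2,x-z)-(G_{T_0/2}*u_0)(x-z)]\dd z$. Minkowski's inequality, \eqref{eq:umom} at time $T_0/2$, and the smoothness of $\rho\mapsto G_\rho$ for $\rho\ge T_0/2$ then bound it by $C\lambda(x)^{-1/q}\e$. On $[T_0/2,t-\e^\beta]$ your computation is correct, and your $\e^{1-3\beta/4}$ for the $L^2$ part is even slightly better than the paper's $\e^{1/4}$.
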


\begin{proof}
The condition $0<\vartheta<\vartheta_q$ implies
$$
2\vartheta<H,
\qquad
a_q+4\vartheta<1-2H,
\qquad
H+2\vartheta<1/2.
$$
Here, $a_q$ is defined in \eqref{eq a}.
By the Lipschitz continuity of $\sigma$ and \eqref{eq:ufrac},
\begin{equation}\label{eq:N-seminorm-sigma}
\mathcal N_{1/2-H,p}\sigma(u(s,\cdot))(y)
\le C\lambda(y)^{-1/q},
\qquad s\in[T_0/2,T+1].
\end{equation}

\noindent
\emph{Step 1.}
Set
$$
\xi_1(s,y):=\sigma(u(s,y))-\sigma(u(t,x)),
\qquad t<s\le t+\e.
$$
Since $\sigma(u(t,x))$ is $\F_t$-measurable,
$\xi_1$ is predictable on $(t,t+\e]$.
Moreover, the frozen value does not depend on $y$, so
\eqref{eq:N-seminorm-sigma} gives
$$
\mathcal N_{1/2-H,p}\xi_1(s,\cdot)(y)
=
\mathcal N_{1/2-H,p}\sigma(u(s,\cdot))(y)
\le C\lambda(y)^{-1/q}.
$$
By the Lipschitz continuity of $\sigma$, \eqref{eq:umom}, and
  \eqref{eq:exact-holder},  we have
$$
\norm{\xi_1(s,y)}_p^2
\le
C\bigl(\lambda(y)^{-2/q}+\lambda(x)^{-2/q}\bigr)
\bigl((s-t)^{2\vartheta}+|x-y|^{4\vartheta}\bigr).
$$

Applying \eqref{eq:BDG-product}, we obtain
\begin{equation}\label{eq:J1-error-decomp}
\begin{aligned}
\norm{J_1-\widetilde J_1}_p^2
\le&\,
C\int_t^{t+\e}\int_{\R^2}
|D_{t+\e-s}(x-y,h)|^2
\norm{\xi_1(s,y)}_p^2
|h|^{2H-2}\dd y\dd h\dd s\\
&+C\int_t^{t+\e}\int_\R
G_{t+\e-s}(x-y)^2
\bigl[\mathcal N_{1/2-H,p}\sigma(u(s,\cdot))(y)\bigr]^2
\dd y\dd s.
\end{aligned}
\end{equation}
Substituting the preceding coefficient bounds into
\eqref{eq:J1-error-decomp} and applying
\eqref{t:eq:kernel-L2}-\eqref{t:eq:kernel-weighted}
with $r=t+\e-s$, we obtain
$$
\begin{aligned}
\norm{J_1-\widetilde J_1}_p^2
\le&\,
C\lambda(x)^{-2/q}\int_0^\e
\bigl(
(\e-r)^{2\vartheta}r^{H-1}
+r^{H+2\vartheta-1}
+r^{-1/2}
\bigr)\dd r\\
\le&\,
C\lambda(x)^{-2/q}
\bigl(\e^{H+2\vartheta}+\e^{1/2}\bigr)\\
\le&\,
C\lambda(x)^{-2/q}\e^{H+2\vartheta},
\end{aligned}
$$
where the last inequality uses $\e<1$ and
$H+2\vartheta<1/2$.
Taking square roots proves \eqref{t:eq:J1-bound}.

 \noindent\emph{Step 2.}
Split $J_{2,\beta}=I_1+I_2$, where $I_1$ and $I_2$ are the
integrals over $[0,T_0/2]$ and $[T_0/2,t-\e^\beta]$, respectively.

For $I_1$, the semigroup property and
Lemma~\ref{lem:semigroup-fubini} give
\begin{equation*}
\begin{aligned}
I_1
= \,
\int_\R
\bigl[G_{t+\e-T_0/2}(z)-G_{t-T_0/2}(z)\bigr] 
\bigl[u(T_0/2,x-z)-(G_{T_0/2}*u_0)(x-z)\bigr]\dd z.
\end{aligned}
\end{equation*}
The Gaussian decay of the heat-kernel derivatives implies
\begin{equation}\label{eq:heat-kernel-time-weights}
\sup_{\rho\in[T_0/2,T+1]}
\int_\R
|\partial_\rho G_\rho(z)|
(1+|z|)^{2(1-H)/q}\dd z
<\infty.
\end{equation}
By Minkowski's inequality, \eqref{eq:umom}, and
Lemma~\ref{lem:initial-smoothing}(i)--(ii),
$$
\begin{aligned}
\norm{I_1}_p
\le&\,
\int_\R
\bigl|G_{t+\e-T_0/2}(z)-G_{t-T_0/2}(z)\bigr| 
\bigl\|u(T_0/2,x-z)
-(G_{T_0/2}*u_0)(x-z)\bigr\|_p\dd z\\
\le&\,
C\lambda(x)^{-1/q}
\int_\R
\bigl|G_{t+\e-T_0/2}(z)-G_{t-T_0/2}(z)\bigr|
(1+|z|)^{2(1-H)/q}\dd z\\
\le&\,
C\lambda(x)^{-1/q}\e.
\end{aligned}
$$
Here the last inequality follows from the fundamental theorem
of calculus and \eqref{eq:heat-kernel-time-weights}. 

For $I_2$, applying \eqref{eq:BDG-product} and using
the Lipschitz continuity of $\sigma$, \eqref{eq:umom},
and \eqref{eq:N-seminorm-sigma}, we obtain
\begin{equation}\label{eq:I2-bound}
\begin{aligned}
\norm{I_2}_p^2
\le&\,
C\int_{T_0/2}^{t-\e^\beta}\int_{\R^2}
|D_{t+\e-s}(x-y,h)-D_{t-s}(x-y,h)|^2
\lambda(y)^{-2/q}|h|^{2H-2}\dd y\dd h\dd s\\
&+C\int_{T_0/2}^{t-\e^\beta}\int_\R
|G_{t+\e-s}(x-y)-G_{t-s}(x-y)|^2
\lambda(y)^{-2/q}\dd y\dd s\\
\le&\,
C\lambda(x)^{-2/q}
\bigl(\e^{2+\beta(H-2)}+\e^{1/2}\bigr),
\end{aligned}
\end{equation}
where the last inequality follows from
Lemma~\ref{t:lem:integrated}(i)-(ii), applied with
$r=t-s\ge\e^\beta$.
Taking square roots in \eqref{eq:I2-bound} and combining
the estimates for $I_1$ and $I_2$ by the triangle inequality
proves \eqref{t:eq:J2-bound}. 

\noindent
\emph{Step 3. }
Set $\tau:=t-\e^\beta$. 
Define
$$
Z_{t,\e}
:=\int_\tau^t\int_\R
\bigl[G_{t+\e-s}(x-y)-G_{t-s}(x-y)\bigr]
W(\dd s,\dd y)
$$
and write
$$
J_{3,\beta}-\widetilde J_{3,\beta}=K_1+K_2,
$$
where
$$
\begin{aligned}
K_1:=&\,
J_{3,\beta}-\sigma(u(\tau,x))Z_{t,\e},\\
K_2:=&\,
\bigl[\sigma(u(\tau,x))-\sigma(u(t,x))\bigr]Z_{t,\e}.
\end{aligned}
$$

For $K_1$, set
$$
\xi_3(s,y):=\sigma(u(s,y))-\sigma(u(\tau,x)),
\qquad \tau<s\le t.
$$
As in Step 1, $\xi_3$ is predictable on $(\tau,t]$ and has
the same spatial fractional seminorm as $\sigma(u(s,\cdot))$.
The coefficient increment estimate from that step, together
with $s-\tau\le\e^\beta$, also gives
\begin{equation}\label{Eq:xi3}
\norm{\xi_3(s,y)}_p^2
\le
C\bigl(\lambda(y)^{-2/q}+\lambda(x)^{-2/q}\bigr)
\bigl(\e^{2\beta\vartheta}+|x-y|^{4\vartheta}\bigr).
\end{equation}
Applying \eqref{eq:BDG-product} to $K_1$, we obtain
\begin{equation}\label{eq:I1-error-decomp}
\begin{aligned}
\norm{K_1}_p^2
\le&\,
C\int_\tau^t\int_{\R^2}
|D_{t+\e-s}(x-y,h)-D_{t-s}(x-y,h)|^2
\norm{\xi_3(s,y)}_p^2
|h|^{2H-2}\dd y\dd h\dd s\\
&+C\int_\tau^t\int_\R
|G_{t+\e-s}(x-y)-G_{t-s}(x-y)|^2
\bigl[\mathcal N_{1/2-H,p}\sigma(u(s,\cdot))(y)\bigr]^2
\dd y\dd s.
\end{aligned}
\end{equation}
For the kernel difference in the first integral, we use
$$
\begin{aligned}
 |D_{t+\e-s}(x-y,h)-D_{t-s}(x-y,h)|^2 
\le \,
2|D_{t-s}(x-y,h)|^2
+2|D_{t+\e-s}(x-y,h)|^2.
\end{aligned}
$$
Using this inequality,  
  \eqref{eq:N-seminorm-sigma}, and \eqref{Eq:xi3}, we apply
Lemma~\ref{t:lem:integrated}(ii)-(iii) to the two integrals in
\eqref{eq:I1-error-decomp}, with $r=t-s\in[0,\e^\beta]$, to obtain
$$
 \norm{K_1}_p^2
 \le C\lambda(x)^{-2/q}
 \bigl(\e^{\beta(H+2\vartheta)}+\e^{1/2}\bigr).
$$
Since $\e<1$ and $\beta(H+2\vartheta)<1/2$,
taking square roots yields
$$
\norm{K_1}_p
\le C\lambda(x)^{-1/q}
\e^{\beta(H/2+\vartheta)}.
$$

To estimate $K_2$, we use the Gaussianity of $Z_{t,\e}$ and Lemma~\ref{t:lem:integrated}(iii)
to obtain
$$
 \norm{Z_{t,\e}}_{2p}
 \le C\norm{Z_{t,\e}}_2
 \le C\e^{\beta H/2}.
$$
 By H\"older's inequality and the Lipschitz continuity of $\sigma$, we have 
$$
\begin{aligned}
\norm{K_2}_p
\le&\,
\norm{\sigma(u(\tau,x))-\sigma(u(t,x))}_{2p}
\norm{Z_{t,\e}}_{2p}\\
\le&\,
C\lambda(x)^{-1/q}\e^{\beta H}.
\end{aligned}
$$ 

Combining the estimates for $K_1$ and $K_2$ by the triangle
inequality proves \eqref{t:eq:refreeze-bound}. 
The proof is complete.
\end{proof}

\subsection{Proof of Theorem~\ref{thm:local-linearization}(i)}
\label{sec:temporal-proof}

\begin{proof}
We follow the localization strategy used in
\cite[Proposition~4.6]{KSXZ2014},
\cite[Theorem~1.2]{Qian2026}, and
\cite[Theorem~1.2]{WangXiao2024}.

Fix $0<T_0<T<\infty$, $q_H<q\le q_0$, and $0<\delta<\delta_t(q)$. 
Define
$$
\delta(\vartheta):=\frac{(2-H)\vartheta}{2(1+\vartheta)}.
$$
Since $\delta(\vartheta)$ is continuous and strictly increasing,
with $\delta(\vartheta_q)=\delta_t(q)$, we may choose
$0<\vartheta<\vartheta_q$ such that $\delta<\delta(\vartheta)$.
To balance the error exponents, set
$\beta:=1/(1+\vartheta)\in(0,1)$. Then
\begin{equation}\label{t:eq:balance}
2+\beta(H-2)
=\beta(H+2\vartheta)
=H+2\delta(\vartheta).
\end{equation}
Choose $\e_0\in(0,1)$ sufficiently small that
$\e_0^\beta<T_0/4$.

Let $2\le p\le q/2$ and $0<\e\le\e_0$.
For  $t\in[T_0,T]$ and $x\in\R$, by \eqref{eq:Jdecomp}-\eqref{eq:J3approx}, we have 
\begin{equation}\label{Eq:Rt}
\begin{aligned}
R^{\mathrm t}(t,x,\e)
=&\,
(G_{t+\e}*u_0)(x)-(G_t*u_0)(x) +(J_1-\widetilde J_1)+J_{2,\beta}
 +(J_{3,\beta}-\widetilde J_{3,\beta})\\
&-\sigma(u(t,x))
\int_0^{t-\e^\beta}\int_\R
\bigl[G_{t+\e-s}(x-y)-G_{t-s}(x-y)\bigr]
W(\dd s,\dd y).
\end{aligned}
\end{equation}

For the initial term in \eqref{Eq:Rt}, the fundamental theorem
of calculus, Minkowski's inequality, and
Lemma~\ref{lem:initial-smoothing}(ii) yield
$$
\begin{aligned}
&\sup_{T_0\le t\le T}\sup_{x\in\R}
\lambda(x)^{1/q}
\norm{(G_{t+\e}*u_0)(x)-(G_t*u_0)(x)}_p\\
\le&\,
\e\sup_{T_0\le s\le T+1}\sup_{x\in\R}
\lambda(x)^{1/q}
\norm{\partial_x^2(G_s*u_0)(x)}_p\\
\le&\,C\e,
\end{aligned}
$$
where we used
$\partial_s(G_s*u_0)=\partial_x^2(G_s*u_0)$.

For the last term in (\ref{Eq:Rt}), H\"older's inequality,
\eqref{eq:umom}, Gaussian moment comparison, and the unweighted
version of Lemma~\ref{t:lem:integrated}(i) give
$$
\begin{aligned}
&\left\|
\sigma(u(t,x))
\int_0^{t-\e^\beta}\int_\R
\bigl[G_{t+\e-s}(x-y)-G_{t-s}(x-y)\bigr]
W(\dd s,\dd y)
\right\|_p\\
\le&\,
\norm{\sigma(u(t,x))}_{2p}
\left\|
\int_0^{t-\e^\beta}\int_\R
\bigl[G_{t+\e-s}(x-y)-G_{t-s}(x-y)\bigr]
W(\dd s,\dd y)
\right\|_{2p}\\
\le&\,
C\lambda(x)^{-1/q}\e^{1+\beta(H/2-1)}.
\end{aligned}
$$ 

Combining these bounds with Lemma~\ref{t:lem:errors}
and using \eqref{t:eq:balance}, we obtain
$$
\begin{aligned}
 \sup_{T_0\le t\le T}\sup_{x\in\R}
\lambda(x)^{1/q}\norm{R^{\mathrm t}(t,x,\e)}_p 
\le&\,
C\bigl(
\e
+\e^{H/2+\vartheta}
+\e^{H/2+\delta(\vartheta)}
+\e^{1/4}
\bigr)\\
\le&\,
C\e^{H/2+\delta},
\end{aligned}
$$
where the last inequality uses $0<\e<1$ and
$\delta<\delta(\vartheta)<\vartheta<1/4-H/2$.

This proves \eqref{t:eq:main-moment} for
$2\le p\le q/2$ and $\e > 0$.
After decreasing $\e_0$ if necessary, 
the remainder $R^{\mathrm t}(t,x,\e)$ can be 
 handled similarly for $\e < 0$, 
 using $t+\e$ as the base point.
For $1\le p<2$, the same estimate follows from the 
  monotonicity of $L^p$ norms. The proof is complete. 
\end{proof}

\section{Spatial local linearization}\label{sec:spatial}

\subsection{Decomposition and estimates for spatial increments}
\label{sec:spatialdecomp}
Fix $0<T_0<T<\infty$. 
Let $t\in[T_0,T]$,  $x\in\R$,  and
$0<\rho<1\wedge(T_0/2)$, so that $t-\rho\ge T_0/2$.
For $\e>0$,  we split the stochastic convolutions
at time $t-\rho$ by setting
$$
\begin{aligned}
J_{\mathrm f}
:=&\,
\int_0^{t-\rho}\int_\R
D_{t-s}(x-y,\e)\sigma(u(s,y))\,W(\dd s,\dd y),\\
J_{\mathrm n}
:=&\,
\int_{t-\rho}^t\int_\R
D_{t-s}(x-y,\e)\sigma(u(s,y))\,W(\dd s,\dd y),\\
L_{\mathrm f}
:=&\,
\int_0^{t-\rho}\int_\R
D_{t-s}(x-y,\e)\,W(\dd s,\dd y),\\
L_{\mathrm n}
:=&\,
\int_{t-\rho}^t\int_\R
D_{t-s}(x-y,\e)\,W(\dd s,\dd y).
\end{aligned}
$$
 The mild formulations give
\begin{equation}\label{eq:spatial-u-decomp}
\begin{aligned}
u(t,x+\e)-u(t,x)
-\bigl[(G_t*u_0)(x+\e)-(G_t*u_0)(x)\bigr]
= J_{\mathrm f}+J_{\mathrm n},
\end{aligned}
\end{equation}
and
\begin{equation}\label{eq:spatial-U-decomp}
U(t,x+\e)-U(t,x)
=L_{\mathrm f}+L_{\mathrm n}.
\end{equation}

 To preserve predictability on $(t-\rho,t]$,
we freeze the coefficient in $J_{\mathrm n}$ at $(t-\rho,x)$
and define
\begin{equation}\label{eq:Jn-approx}
\widetilde J_{\mathrm n}
:=\sigma(u(t-\rho,x))L_{\mathrm n}.
\end{equation}

 \begin{lemma}\label{lem:freeze}
Let  $q_H<q\le q_0$ and $2\le p\le q/2$.
There exists a constant $C<\infty$ such that
the following estimates hold uniformly for $t\in[T_0,T]$,
$x\in\R$, $0<\rho<1\wedge(T_0/2)$, and $0<\e\le1$:
\begin{itemize}
\item[(i)]
\begin{equation}\label{eq:freeze}
\lambda(x)^{1/q}
\np{J_{\mathrm n}-\widetilde J_{\mathrm n}}p
\le
C\bigl(
\e^H\rho^{H/2}
+\e^{2H}
+\e^{1/2}
\bigr).
\end{equation}
\item[(ii)]
\begin{equation}\label{eq:freeze-coeff}
\lambda(x)^{1/q}
\np{[\sigma(u(t-\rho,x))-\sigma(u(t,x))]L_{\mathrm n}}p
\le C\e^H\rho^{H/2}.
\end{equation}
\end{itemize}
\end{lemma}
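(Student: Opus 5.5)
For part (i), I will follow the scheme of Step 1 and Step 3 of Lemma~\ref{t:lem:errors}. Set
$$
\xi(s,y):=\sigma(u(s,y))-\sigma(u(t-\rho,x)),\qquad t-\rho<s\le t .
$$
Then $J_{\mathrm n}-\widetilde J_{\mathrm n}=\int_{t-\rho}^t\int_\R D_{t-s}(x-y,\e)\xi(s,y)W(\dd s,\dd y)$. The integrand is predictable, because the frozen value is $\F_{t-\rho}$-measurable. Since the frozen value does not depend on $y$, the spatial seminorm of $\xi$ equals that of $\sigma(u(s,\cdot))$, and \eqref{eq:N-seminorm-sigma} bounds it by $C\lambda(y)^{-1/q}$. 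For the pointwise size I will use the Lipschitz property of $\sigma$, \eqref{eq:umom} and Proposition~\ref{prop:regularity}, which give
$$
\norm{\xi(s,y)}_p^2\le C\bigl(\lambda(x)^{-2/q}+\lambda(y)^{-2/q}\bigr)\min\bigl\{1,\ \rho^{H}+|x-y|^{2H}\bigr\}.
$$
I then apply \eqref{eq:BDG-product} with $g=D_{\cdot}(\cdot,\e)$. This produces two terms. The first is
$$
\int_0^\rho\int_{\R^2}|D_r(z-\eta,\e)-D_r(z,\e)|^2\norm{\xi}_p^2|\eta|^{2H-2}\,\dd z\,\dd\eta\,\dd r,
$$
and the second is $\int_0^\rho\int_\R |D_r(z,\e)|^2\lambda(x-z)^{-2/q}\,\dd z\,\dd r$.

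For the second term, \eqref{eq:weight-ratio} turns the weight into $\lambda(x)^{-2/q}(1+|z|)^{a_q/2}$. The weighted $L^2$ energy of the spatial kernel increment (Lemma~\ref{lem:singleenergy}(i)) then gives $C\lambda(x)^{-2/q}\e$, which accounts for the $\e^{1/2}$ term. For the first term I split $\norm{\xi}_p^2$ into its two pieces.

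The $\rho^H$ piece, combined with the weighted fractional energy of the spatial increment (Lemma~\ref{lem:weightedenergy}), contributes $C\lambda(x)^{-2/q}\rho^H\e^{2H}$, because the full time integral of the fractional energy of $D_r(\cdot,\e)$ is of order $\e^{2H}$. This gives the $\e^H\rho^{H/2}$ term.

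The $|x-y|^{2H}=|z|^{2H}$ piece is the main obstacle. Here I need a weighted fractional energy estimate for $D_r$ with the extra polynomial factor $|z|^{2H}$ inside the nonlocal integral. I expect the bound
$$
\int_0^\rho\int_{\R^2}|D_r(z-\eta,\e)-D_r(z,\e)|^2|z|^{2H}(1+|z|)^{a_q/2}|\eta|^{2H-2}\,\dd z\,\dd\eta\,\dd r\le C\bigl(\e^{4H}+\e\bigr).
$$
Heuristically, when $r\lesssim\e^2$ the kernel is concentrated on $|z|\lesssim\e$, so it contributes $\e^{2H}\cdot\e^{2H}$. When $r\gg\e^2$, one has $|D_r|\approx\e|\partial_zG_r|$, and the integrand is of order $\e^2r^{H-1}\cdot r^{-1}\cdot r^{H}$; its $r$-integral is dominated near $r=\e^2$ and gives $\e^{4H}$ (with $4H>1$ giving convergence issues only at the level of $\e$). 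I will prove this by the Appendix method: Fourier/scaling in $z$ for $r\le\e^2$, and the mean-value bound with Gaussian derivative decay for $r>\e^2$. To make the $\eta$-difference compatible with $|z|^{2H}$, I will use $|z|^{2H}\le C(|z-\eta|^{2H}+|\eta|^{2H})$ and treat the $|\eta|^{2H}$ part through the $L^2$ bound $|\eta|^{4H-2}$, which is again integrable against the kernel increment. Taking square roots then yields \eqref{eq:freeze}.

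For part (ii), I will apply H\"older's inequality:
$$
\norm{[\sigma(u(t-\rho,x))-\sigma(u(t,x))]L_{\mathrm n}}_p\le \norm{\sigma(u(t-\rho,x))-\sigma(u(t,x))}_{2p}\,\norm{L_{\mathrm n}}_{2p}.
$$
Since $2p\le q$, Proposition~\ref{prop:regularity} bounds the first factor by $C\lambda(x)^{-1/q}\rho^{H/2}$. The second factor is Gaussian, so it is at most $C\norm{L_{\mathrm n}}_2$. By the isometry and the restriction property, $\norm{L_{\mathrm n}}_2\le\norm{U(t,x+\e)-U(t,x)}_2$, because $L_{\mathrm f}$ and $L_{\mathrm n}$ are independent. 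Lemma~\ref{lem:U-increment} then gives $\norm{L_{\mathrm n}}_2\le C\e^H$, which proves \eqref{eq:freeze-coeff}.
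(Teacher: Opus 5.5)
Your overall scheme matches the paper's. You freeze at $(t-\rho,x)$, apply \eqref{eq:BDG-product}, control the second term by Lemma~\ref{lem:singleenergy}(i), handle the $\rho^H$ piece by \eqref{eq:wenergy}, and prove (ii) by H\"older and Gaussian moment equivalence. Part (ii) is fine: bounding $\np{L_{\mathrm n}}2$ by $\np{U(t,x+\e)-U(t,x)}2$, using that $L_{\mathrm f}$ and $L_{\mathrm n}$ are orthogonal, is a valid substitute for Lemma~\ref{lem:energy}. The exponent $a_q/2$ from \eqref{eq:weight-ratio} should be $a_q$, which is harmless since $a_q<1-2H$.

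The gap is in the $|z|^{2H}$ piece, which you rightly flag as the crux. You discard the truncation that your own coefficient bound $\min\{1,\rho^H+|x-y|^{2H}\}$ provides. The untruncated estimate you announce is false: its left-hand side is $+\infty$ for every $H\in(1/4,1/2)$. Your heuristic that the kernel ``is concentrated on $|z|\lesssim\e$'' ignores the nonlocal tail. For fixed $r$, $D_r(z,\e)$ is Gaussian-small as $|z|\to\infty$, but the region $|\eta-z|\le1$ gives, for large $|z|$,
\[
\int_\R|D_r(z-\eta,\e)-D_r(z,\e)|^2|\eta|^{2H-2}\,\dd\eta\ \ge\ c_r|z|^{2H-2},\qquad c_r>0.
\]
Hence the $z$-integral behaves like $\int^\infty|z|^{4H-2}(1+|z|)^{a}\,\dd z=\infty$, since $4H-2>-1$. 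Restricting to $r\le\rho$ does not help.

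Your splitting $|z|^{2H}\le C(|z-\eta|^{2H}+|\eta|^{2H})$ fails for the same reason. As $|\eta|\to\infty$, $|D_r(z-\eta,\e)-D_r(z,\e)|^2\to|D_r(z,\e)|^2$, and $|\eta|^{4H-2}$ is not integrable at infinity when $H>1/4$.

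The repair is to keep $\min\{1,\rho^H+|z|^{2H}\}\le\rho^H+(|z|^{2H}\wedge1)$, where the $\wedge1$ comes from \eqref{eq:umom} when $|x-y|>1$. The required bound is then exactly \eqref{eq:wlocalenergy} in Lemma~\ref{lem:weightedenergy} with $\gamma=H$ and $a=a_q$. This choice is admissible because $H<1-H$, $H\ne\frac12-H$, and $a_q<1-2H$, and it gives $C(\e^{4H}+\e)$. The truncation is precisely what makes the contribution of $|z|>1$ finite, via \eqref{eq:weightedtail}. This is how the paper argues, and no new kernel estimate has to be proved.
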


\begin{proof}
 \noindent
\emph{(i)}
For $t-\rho\le s\le t$, set
$$
A_s(y):=\sigma(u(s,y))-\sigma(u(t-\rho,x)).
$$
Since $\sigma(u(t-\rho,x))$ is
$\mathcal F_{t-\rho}$-measurable, $A$ is predictable on
$(t-\rho,t]$, and
$$
J_{\mathrm n}-\widetilde J_{\mathrm n}
=
\int_{t-\rho}^t\int_\R
D_{t-s}(x-y,\e)A_s(y)\,W(\dd s,\dd y).
$$
Since $\rho<1$, the Lipschitz continuity
of $\sigma$ and \eqref{eq:exact-holder}, together with
\eqref{eq:umom} for $|y-x|>1$, give
\begin{equation}\label{eq:As}
\np{A_s(y)}p^2
\le
C\bigl[\lambda(y)^{-2/q}+\lambda(x)^{-2/q}\bigr]
\bigl[\rho^H+(|y-x|^{2H}\wedge1)\bigr].
\end{equation}
Moreover, the frozen value does not depend on $y$, so
the Lipschitz continuity of $\sigma$ and \eqref{eq:ufrac} imply
\begin{align}\label{eq NA}
\mathcal N_{1/2-H,p}A_s(y)
=
\mathcal N_{1/2-H,p}\sigma(u(s,\cdot))(y)
\le C\lambda(y)^{-1/q}.
\end{align}

Applying \eqref{eq:BDG-product}, we obtain
$$
\norm{J_{\mathrm n}-\widetilde J_{\mathrm n}}_p^2
\le C(B_1+B_2),
$$
where
$$
\begin{aligned}
B_1:=&\,
\int_{t-\rho}^t\int_{\R^2}
|D_{t-s}(x-y-h,\e)-D_{t-s}(x-y,\e)|^2
\np{A_s(y+h)}p^2
|h|^{2H-2}\dd h\dd y\dd s,\\
B_2:=&\,
\int_{t-\rho}^t\int_\R
|D_{t-s}(x-y,\e)|^2
\bigl[
\mathcal N_{1/2-H,p}\sigma(u(s,\cdot))(y)
\bigr]^2
\dd y\dd s.
\end{aligned}
$$

For $B_1$, set $r=t-s$ and $z=x-y-h$, so that
$A_s(y+h)=A_{t-r}(x-z)$.
By \eqref{eq:As}, Lemma~\ref{lem:initial-smoothing}(i),
and Lemma~\ref{lem:weightedenergy},
$$
\begin{aligned}
B_1
\le&\,
C\lambda(x)^{-2/q}\int_0^\rho\int_{\R^2}
|D_r(z+h,\e)-D_r(z,\e)|^2(1+|z|)^{a_q} 
\bigl[\rho^H+(|z|^{2H}\wedge1)\bigr]
|h|^{2H-2}\dd h\dd z\dd r\\
\le&\,
C\lambda(x)^{-2/q}
\bigl(\rho^H\e^{2H}+\e^{4H}+\e\bigr).
\end{aligned}
$$
For $B_2$, use $r=t-s$ and $z=x-y$. By \eqref{eq NA},  
Lemmas~\ref{lem:initial-smoothing}(i) and
\ref{lem:singleenergy}(i), we have 
$$
\begin{aligned}
B_2
\le&\,
C\lambda(x)^{-2/q}
\int_0^\rho\int_\R
|D_r(z,\e)|^2(1+|z|)^{a_q}\dd z\dd r\\
\le&\,
C\lambda(x)^{-2/q}\e.
\end{aligned}
$$
Combining the bounds for $B_1$ and $B_2$ and taking
square roots proves \eqref{eq:freeze}.

\noindent
\emph{(ii)}
The Gaussianity of $L_{\mathrm n}$,  \eqref{eq:stochastic-isometry},
and Lemma~\ref{lem:energy} give 
$$
\np{L_{\mathrm n}}{2p}
\le C\np{L_{\mathrm n}}2
\le C\e^H.
$$
 By H\"older's inequality, the Lipschitz continuity of $\sigma$, and \eqref{eq:exact-holder}, we have 
$$
\begin{aligned}
&\lambda(x)^{1/q}
\np{[\sigma(u(t-\rho,x))-\sigma(u(t,x))]L_{\mathrm n}}p\\
\le&\,
\lambda(x)^{1/q}
\np{\sigma(u(t-\rho,x))-\sigma(u(t,x))}{2p} \cdot
\np{L_{\mathrm n}}{2p}\\
\le&\,
C\e^H\rho^{H/2}.
\end{aligned}
$$ 
This proves \eqref{eq:freeze-coeff}.
\end{proof}

 \begin{lemma}\label{lem:remote}
Let $q_H<q\le q_0$, $2\le p\le q/2$,
$0<\rho<1\wedge(T_0/2)$, and
$0<\e\le\sqrt{\rho}$.
Then
$$
\lambda(x)^{1/q}
\left(
\np{J_{\mathrm f}}p +
\np{\sigma(u(t,x))L_{\mathrm f}}p
\right)
\le
C\e\rho^{-(1-H)/2}.
$$
\end{lemma}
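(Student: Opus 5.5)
We estimate $J_{\mathrm f}$ and $\sigma(u(t,x))L_{\mathrm f}$ separately, in the same way as $J_{2,\beta}$ in Lemma~\ref{t:lem:errors}. Only the far-field kernel regime $t-s\ge\rho$ enters, where the spatial increment $D_{t-s}(\cdot,\e)$ can be controlled by a spatial derivative of the heat kernel. That derivative gains a factor $\e$ at the price of negative powers of $t-s$.

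\emph{The term $J_{\mathrm f}$.} We split the time interval at $T_0/2$, writing $J_{\mathrm f}=I_1+I_2$ with $I_1$ the integral over $[0,T_0/2]$ and $I_2$ the integral over $[T_0/2,t-\rho]$.

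For $I_1$, the semigroup property and Lemma~\ref{lem:semigroup-fubini} (with $k=G_{t-T_0/2}(\cdot+\e)-G_{t-T_0/2}$) give
\[
I_1=\int_\R D_{t-T_0/2}(z,\e)\,f(x-z)\,\dd z,
\qquad
f:=u(T_0/2,\cdot)-(G_{T_0/2}*u_0).
\]
Then Minkowski's inequality, $\|f(z)\|_p\le C\lambda(z)^{-1/q}$, \eqref{eq:weight-ratio}, and $|D_a(z,\e)|\le\e\int_0^1|G_a'(z+\theta\e)|\dd\theta$ with $a\ge T_0/2$ yield $\|I_1\|_p\le C\lambda(x)^{-1/q}\e$. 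Since $\rho<1$, this is at most $C\lambda(x)^{-1/q}\e\rho^{-(1-H)/2}$.

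For $I_2$, we apply \eqref{eq:BDG-product} together with \eqref{eq:sigma-bounds}, substitute $r=t-s\in[\rho,T]$, and use \eqref{eq:weight-ratio} to move the weight to $\lambda(x)^{-2/q}$ at the cost of a factor $(1+|z|)^{a_q}$. This reduces matters to two weighted kernel integrals:
\[
\int_\rho^T\!\!\int_{\R^2}|D_r(z+h,\e)-D_r(z,\e)|^2(1+|z|)^{a_q}|h|^{2H-2}\dd h\dd z\dd r
\quad\text{and}\quad
\int_\rho^T\!\!\int_\R|D_r(z,\e)|^2(1+|z|)^{a_q}\dd z\dd r .
\]
We write $D_r(z,\e)=\e\int_0^1G_r'(z+\theta\e)\dd\theta$ and use Minkowski's inequality. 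By scaling, the weighted $\dot H^{1/2-H}$-type energy of $G_r'$ is of order $r^{-1}\cdot r^{H-1/2}\cdot r^{-1/2}=r^{H-2}$, so the first integral is at most $C\e^2\int_\rho^T r^{H-2}\dd r\le C\e^2\rho^{H-1}$. The second is at most $C\e^2\int_\rho^T r^{-3/2}\dd r\le C\e^2\rho^{-1/2}\le C\e^2\rho^{H-1}$, since $\rho<1$ and $H-1<-1/2$. Taking square roots gives $\|I_2\|_p\le C\lambda(x)^{-1/q}\e\rho^{-(1-H)/2}$.

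The weight factor $(1+|z|)^{a_q}$ only perturbs these scaling bounds by factors $(1+r^{1/2})^{a_q}$, which are bounded for $r\le T$. In the regime $\e\le\sqrt\rho\le\sqrt r$, the shift $\theta\e$ is at most of the order of the kernel width, so the Gaussian decay is preserved. These are exactly the weighted increment estimates of Lemma~\ref{t:lem:integrated} and Lemmas~\ref{lem:weightedenergy}--\ref{lem:singleenergy}, now applied to spatial rather than temporal increments. I would invoke those appendix lemmas if they cover the range $r\ge\rho$. Otherwise I would reprove the bound through the Fourier representation: in the unweighted case the quantity is $c_{1,1}\int|e^{i\xi\e}-1|^2e^{-2r\xi^2}|\xi|^{1-2H}\dd\xi$, and $|e^{i\xi\e}-1|^2\le\e^2\xi^2$. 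I expect this weighted fractional-difference bound to be the main obstacle.

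\emph{The term $\sigma(u(t,x))L_{\mathrm f}$.} By H\"older's inequality, $\|\sigma(u(t,x))L_{\mathrm f}\|_p\le\|\sigma(u(t,x))\|_{2p}\|L_{\mathrm f}\|_{2p}$, and by \eqref{eq:umom} with $2p\le q$, the first factor is at most $C\lambda(x)^{-1/q}$. Since $L_{\mathrm f}$ is Gaussian, \eqref{eq:stochastic-isometry} gives
\[
\|L_{\mathrm f}\|_{2p}\le C\|L_{\mathrm f}\|_2
=C\Bigl(c_{1,1}\int_\rho^t\!\int_\R|e^{i\xi\e}-1|^2e^{-2r\xi^2}|\xi|^{1-2H}\dd\xi\dd r\Bigr)^{1/2}.
\]
Bounding $|e^{i\xi\e}-1|^2\le\e^2\xi^2$ gives $\|L_{\mathrm f}\|_2^2\le C\e^2\int_\rho^\infty r^{H-2}\dd r=C\e^2\rho^{H-1}$, which is the unweighted version of the bound for $I_2$. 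Combining the two parts proves the lemma.
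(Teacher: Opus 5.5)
Your argument is correct, but for $J_{\mathrm f}$ it takes a different route from the paper.

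\emph{The paper's route.} The paper never applies the BDG inequality to $J_{\mathrm f}$. It uses Lemma~\ref{lem:semigroup-fubini} once, with $r_0=t-\rho$ and $k=D_\rho(\cdot,\e)$, to write $J_{\mathrm f}$ as a deterministic convolution of $D_\rho(\cdot,\e)$ with $u(t-\rho,\cdot)-(G_{t-\rho}*u_0)$. It then subtracts the value at $x$ using $\int_\R D_\rho(z,\e)\,\dd z=0$. Finally it combines the H\"older bound \eqref{eq:exact-holder} (exponent $H$) with Lemma~\ref{lem:singleenergy}(ii) for $\gamma=H$; this is where $\e\le\sqrt\rho$ enters. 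That route needs no fractional-difference kernel estimate and no split at $T_0/2$, because the whole past up to $t-\rho\ge T_0/2$ is absorbed into $u(t-\rho,\cdot)$.

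\emph{Your route.} Your split at $T_0/2$ followed by \eqref{eq:BDG-product} on $[T_0/2,t-\rho]$ mirrors Step~2 of Lemma~\ref{t:lem:errors}. It uses only the a priori bounds \eqref{eq:umom} and \eqref{eq:sigma-bounds} rather than Proposition~\ref{prop:regularity}. It also works for every $0<\e\le1$, not just $\e\le\sqrt\rho$.

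\emph{The missing kernel bound.} The price is a weighted fractional energy bound for $G_r'$ restricted to $r\in[\rho,T]$, which the paper does not state. None of the appendix lemmas you name supplies it. Lemmas~\ref{lem:weightedenergy} and~\ref{lem:singleenergy}(i) integrate over all $r>0$ and give only $\e^{2H}$ and $\e$, which are too weak here. Lemma~\ref{t:lem:integrated} concerns temporal increments. Your scaling computation is the right substitute, but the reason the weight is harmless is not Gaussian decay: $Q_HG_1'(v)$ decays only like $|v|^{2H-2}$. Scaling gives
\[
\int_{\R^2}|G_r'(w+h)-G_r'(w)|^2|h|^{2H-2}(1+|w|)^{a_q}\,\dd h\,\dd w
=r^{H-2}\int_\R Q_HG_1'(v)\,(1+\sqrt r\,|v|)^{a_q}\,\dd v .
\]
Lemma~\ref{lem:schwartz} with $f=G_1'$ bounds the last integral uniformly in $r\le T$ precisely because $a_q<1-2H$, i.e.\ because $q>4(1-H)/(1-2H)$. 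This is the computation behind \eqref{eq:derivative-spatial}, with $\partial_rG_r$ replaced by $G_r'$. Together with $(1+|w-\theta\e|)^{a_q}\le2^{a_q}(1+|w|)^{a_q}$ for $\e\le1$, it gives your bound $\np{I_2}p\le C\lambda(x)^{-1/q}\e\rho^{-(1-H)/2}$.

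Your treatment of $L_{\mathrm f}$ is the same as the paper's.
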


\begin{proof}
By the heat-semigroup property and
Lemma~\ref{lem:semigroup-fubini},
\[
J_{\mathrm f}
=
\int_\R D_\rho(z,\e)
\bigl[u(t-\rho,x-z)-(G_{t-\rho}*u_0)(x-z)\bigr]\dd z.
\]
Since $\int_\R D_\rho(z,\e)\dd z=0$, we may write
\begin{equation}\label{eq:remoteconv}
\begin{aligned}
J_{\mathrm f}
=\int_\R D_\rho(z,\e)\bigl\{
&u(t-\rho,x-z)-u(t-\rho,x)\\
&-(G_{t-\rho}*u_0)(x-z)+(G_{t-\rho}*u_0)(x)
\bigr\}\dd z.
\end{aligned}
\end{equation}

By \eqref{eq:umom} and \eqref{eq:exact-holder}, we have 
\[
\np{u(t-\rho,x-z)-u(t-\rho,x)}p
\le
C\bigl[\lambda(x-z)^{-1/q}+\lambda(x)^{-1/q}\bigr]
\left(|z|^H\wedge1\right).
\]

Similarly, using \eqref{eq:msmooth} with $j=0,1$, we obtain
$$
\norm{
(G_{t-\rho}*u_0)(x-z)-(G_{t-\rho}*u_0)(x)
}_p
\le
C\bigl[\lambda(x-z)^{-1/q}+\lambda(x)^{-1/q}\bigr]
(|z|\wedge1).
$$
It follows from \eqref{eq:remoteconv}, Minkowski's inequality,   and
Lemmas~\ref{lem:initial-smoothing}(i) and~\ref{lem:singleenergy}(ii) that
\[
\begin{aligned}
\np{J_{\mathrm f}}p
\le &\,
C\lambda(x)^{-1/q}
\int_\R |D_\rho(z,\e)|
(1+|z|)^{2(1-H)/q}
\left(|z|^H\wedge1\right)\dd z\\
\le &\,
C\lambda(x)^{-1/q}\e\rho^{-(1-H)/2}.
\end{aligned}
\]

By  H\"older's inequality and \eqref{eq:umom}, we have
\[
\lambda(x)^{1/q}
\np{\sigma(u(t,x))L_{\mathrm f}}p
\le
C\np{L_{\mathrm f}}{2p}.
\]
By \eqref{eq:stochastic-isometry}, we have 
 \[
\begin{aligned}
\E|L_{\mathrm f}|^2
&\le
C\int_\rho^\infty\int_\R
e^{-2r\xi^2}|e^{i\e\xi}-1|^2
|\xi|^{1-2H}\dd\xi\dd r\\
&\le
C\e^2\int_\rho^\infty\int_\R
e^{-2r\xi^2}|\xi|^{3-2H}\dd\xi\dd r\\
&=
C\e^2\int_\rho^\infty
r^{H-2}
\left(
\int_\R e^{-2z^2}|z|^{3-2H}\dd z
\right)\dd r\\
&\le
C\e^2\rho^{H-1},
\end{aligned}
\]
where in the third line we used the change of variables
$z=\sqrt r\,\xi$.
Gaussian moment equivalence therefore gives
\[
\lambda(x)^{1/q}
\np{\sigma(u(t,x))L_{\mathrm f}}p
\le
C\e\rho^{-(1-H)/2}.
\]
This proves the lemma.
\end{proof}

\subsection{Proof of Theorem~\ref{thm:local-linearization}(ii)}
\label{sec:spatialbalance}

\begin{proof}Fix $0<T_0<T<\infty$, $q_H<q\le q_0$, and
$0<\delta<\delta_x^*$. We first estimate
$R^{\mathrm x}(t,x,\e)$ for $2\le p\le q/2$ and
$0<\e\le\e_0$. Set $\rho:=\e^{2(1-H)}$ and choose
$\e_0\in(0,1)$ sufficiently small that
$$
 0<\rho<1\wedge(T_0/2),
 \qquad
 \e\le\sqrt{\rho}.
$$ 

For $t\in[T_0,T]$ and $x\in\R$,
\eqref{eq:spatial-u-decomp}--\eqref{eq:Jn-approx} give
\begin{equation}\label{eq:Rx-decomp}
\begin{aligned}
R^{\mathrm x}(t,x,\e)
=&\,
(G_t*u_0)(x+\e)-(G_t*u_0)(x)\\
&+\bigl[J_{\mathrm f}-\sigma(u(t,x))L_{\mathrm f}\bigr]
+\bigl[J_{\mathrm n}-\widetilde J_{\mathrm n}\bigr]\\
&+\bigl[\sigma(u(t-\rho,x))-\sigma(u(t,x))\bigr]
L_{\mathrm n}.
\end{aligned}
\end{equation}
The fundamental theorem of calculus, Minkowski's inequality,
and Lemma~\ref{lem:initial-smoothing}(i)--(ii) yield
$$
\lambda(x)^{1/q}
\norm{(G_t*u_0)(x+\e)-(G_t*u_0)(x)}_p
\le C\e.
$$
Using Lemmas~\ref{lem:freeze} and \ref{lem:remote} in
\eqref{eq:Rx-decomp}, we obtain
$$
\begin{aligned}
\lambda(x)^{1/q}\np{R^{\mathrm x}(t,x,\e)}p
\le&\,
C\Bigl(
\e
+\e\rho^{-(1-H)/2}
+\e^H\rho^{H/2}
+\e^{2H}
+\e^{1/2}
\Bigr)\\
\le&\,
C\Bigl(
\e
+\e^{H+H(1-H)}
+\e^{2H}
+\e^{1/2}
\Bigr),
\end{aligned}
$$
uniformly for $t\in[T_0,T]$ and $x\in\R$.
Since $H>1/4$, we have 
$\e^{2H}\le\e^{1/2}$ for $0<\e<1$.
Moreover,
$$
\delta<\delta_x^*
=
\min\left\{H(1-H),\frac12-H\right\}.
$$
Therefore,
$$
\sup_{T_0\le t\le T}\sup_{x\in\R}
\lambda(x)^{1/q}\np{R^{\mathrm x}(t,x,\e)}p
\le C\e^{H+\delta},
\qquad 0<\e\le\e_0.
$$

The remainder $R^{\mathrm x}(t,x,\e)$ for $\e<0$ is handled similarly by
shifting the base point to $x+\e$.
For $1\le p<2$, the same estimate follows from 
 the monotonicity of $L^p$ norms. The proof is complete. 
\end{proof}

\section{Exact spatial and temporal oscillations}\label{sec:apps}
\subsection{Uniform pathwise
remainders}
  The following lemma is a key ingredient in the proofs of Proposition~\ref{thm:oscillations} and 
Theorem~\ref{thm:uniform-moduli}. Its proof uses a grid interpolation
argument inspired by \cite[Proposition~3.2]{HK2017},
\cite[Lemma~5.1]{Qian2026}, and
\cite[Propositions~4.9--4.10]{HuLee2026}.

\begin{lemma}\label{lemma:pathwise}
Assume Condition~\ref{cond:A} holds with $q_0>q_H$.
\begin{itemize}
\item[(i)] Fix $x\in\R$. With probability one, for every compact interval 
$I\subset(0,\infty)$ and every $0<\delta_t<\delta_{t,\mathrm{pw}}$,
\begin{equation}\label{eq:temporal-pathwise}
 \sup_{\substack{s,t\in I\\|t-s|\le r}}
 |R^{\mathrm t}(s,x,t-s)|=o\left(r^{H/2+\delta_t}\right),\qquad r\downarrow0.
\end{equation}  
\item[(ii)]   Fix $t>0$. With probability one, for every compact interval
$J\subset\R$ and every $0<\delta_x<\delta_{x,\mathrm{pw}}$,
\begin{equation}\label{eq:spatial-pathwise}
 \sup_{\substack{x,y\in J\\|y-x|\le r}}
 |R^{\mathrm x}(t,x,y-x)|=o\left(r^{H+\delta_x}\right),\qquad r\downarrow0.
\end{equation}

\end{itemize}
\end{lemma}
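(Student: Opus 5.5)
The plan is a grid-interpolation (chaining) argument that upgrades the pointwise moment bounds of Theorem~\ref{thm:local-linearization} to almost-sure uniform bounds. I describe part (ii); part (i) is identical with $H$ replaced by $H/2$ and $\delta_x^*$ replaced by $\delta_t(q_0)$. The interval $I$ has to be kept inside $(0,\infty)$, which is where the different constants in \eqref{eq:delta-t-pw} come from.

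\emph{Reductions.} It suffices to treat a fixed $J=[-K,K]$ with $K\in\mathbb N$ and a fixed rational $\delta_x<\delta_{x,\mathrm{pw}}$; a countable union then gives the statement for all $J$ and all $\delta_x$ simultaneously. By monotonicity in $r$ it also suffices to work along $r_n=2^{-n}$. Since $\lambda$ is bounded below on $J+[-1,1]$, Theorem~\ref{thm:local-linearization}(ii) with $q=q_0$ and $p=q_0/2$ gives
\[
\norm{R^{\mathrm x}(t,x,h)}_p\le C|h|^{H+\delta},\qquad x\in J,\ 0<|h|\le \e_0,
\]
for every $\delta<\delta_x^*$.

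\emph{Step 1: grid points.} Fix a mesh $\eta_n=r_n^{\theta}$ with $\theta>1$, to be chosen. Consider all grid pairs $x',y'\in\eta_n\mathbb Z\cap J$ with $|y'-x'|\le r_n$; there are $O(r_n^{1-2\theta})$ of them. By Chebyshev's inequality with $p=q_0/2$,
\[
\PP\Bigl\{\max_{\text{grid pairs}}|R^{\mathrm x}(t,x',y'-x')|>r_n^{H+\delta_x'}\Bigr\}\le C\,r_n^{1-2\theta}\,r_n^{p(\delta-\delta_x')},
\]
where $\delta_x<\delta_x'<\delta<\delta_x^*$. This is summable in $n$ provided $(q_0/2)(\delta-\delta_x')>2\theta-1$. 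Borel--Cantelli then controls the grid maximum by $r_n^{H+\delta_x'}=o(r_n^{H+\delta_x})$.

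\emph{Step 2: interpolation.} For general $x,y\in J$ with $|y-x|\le r_n$, pick the nearest grid points $x',y'$, each within $\eta_n$. Writing out the definition \eqref{eq Rx} and subtracting,
\[
\begin{aligned}
|R^{\mathrm x}(t,x,y-x)-R^{\mathrm x}(t,x',y'-x')|
\le\ & |u(t,y)-u(t,y')|+|u(t,x)-u(t,x')|\\
&+L|u(t,x)-u(t,x')|\,|U(t,y)-U(t,x)|\\
&+|\sigma(u(t,x'))|\bigl(|U(t,y)-U(t,y')|+|U(t,x)-U(t,x')|\bigr).
\end{aligned}
\]
Proposition~\ref{prop:regularity} and Lemma~\ref{lem:U-increment} give $L^{q_0}$ increment bounds of order $|x-x'|^H$. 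Kolmogorov's continuity theorem then yields, almost surely, a Hölder modulus on $J$ with any exponent $\gamma<H-1/q_0$ for both $u(t,\cdot)$ and $U(t,\cdot)$; for $U$ every exponent below $H$ is available by Gaussianity. Also $\sup_J|\sigma(u(t,\cdot))|<\infty$. Hence the interpolation error is $O(\eta_n^{\gamma})=O(r_n^{\theta\gamma})$, and this is $o(r_n^{H+\delta_x})$ as soon as $\theta\gamma>H+\delta_x$. For $r_{n+1}<r\le r_n$ the bound at level $r_n$ transfers to $r$ at the cost of a constant factor.

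\emph{Main obstacle: balancing the exponents.} We need
\[
\theta(H-1/q_0)>H+\delta_x
\quad\text{and}\quad
\tfrac{q_0}{2}(\delta_x^*-\delta_x)>2\theta-1,
\]
with strict slack absorbing the $\delta$'s. Eliminating $\theta$ gives an explicit upper bound on $\delta_x$. I expect that doing this carefully, with moments taken in $L^{q_0}$ and possibly a rescaled use of the Kolmogorov exponent, reproduces exactly the rational expression \eqref{eq:delta-x-pw}. The hypothesis $\delta_{x,\mathrm{pw}}>0$ is precisely what makes a choice of $\theta$ feasible. The delicate point is that Kolmogorov costs $1/q_0$ in the Hölder exponent of $u$ while the union bound costs $2\theta-1$ in moments, so $\theta$ must sit in a narrow window.

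In part (i) the temporal grid has to stay inside a compact $I\subset(0,\infty)$, and the relevant Hölder exponent is $H/2-2/q_0$ (from the time parametrization). This accounts for the different constants in \eqref{eq:delta-t-pw}. The analogous choice $p=q_0/2$ and the bound of Theorem~\ref{thm:local-linearization}(i) are used in place of part (ii).
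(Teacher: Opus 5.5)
Your proposal is essentially the paper's proof. The paper uses the same mesh $a_n=r_n^m$ (your $\eta_n=r_n^\theta$), Markov's inequality with $P=q_0/2$ applied to Theorem~\ref{thm:local-linearization}(ii) at $q=q_0$, Borel--Cantelli over the $O(r_n^{1-2m})$ grid pairs, and Kolmogorov interpolation with a H\"older exponent $\zeta<H-1/q_0$. Eliminating $\theta$ from your two inequalities gives exactly $\delta_x<\delta_{x,\mathrm{pw}}$, so no rescaling of the Kolmogorov exponent is needed.

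There are two small corrections.

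\textbf{Grid radius.} The grid pairs must be allowed to satisfy $|y'-x'|\le 2r_n$, as in the paper. The nearest grid points to $x,y$ with $|y-x|\le r_n$ can be at distance up to $r_n+2\eta_n$, and this change does not alter the count of pairs.

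\textbf{Temporal exponent.} In part (i) the Kolmogorov exponent is $H/2-1/q_0$, not $H/2-2/q_0$. This is what your own rule ``replace $H$ by $H/2$'' gives. With $H/2-1/q_0$ the same elimination reproduces $\delta_{t,\mathrm{pw}}$ exactly. The constants $Hq_0-2$, $4/q_0$, $Hq_0+6$ come solely from the substitutions $H\to H/2$ and $\delta_x^*\to\delta_t(q_0)$, not from keeping $I$ inside $(0,\infty)$. If you used $H/2-2/q_0$, you would only obtain the strictly smaller threshold
\[
\frac{(Hq_0-4)\delta_t(q_0)-2H-8/q_0}{Hq_0+4},
\]
and so would not prove (i) for all $\delta_t<\delta_{t,\mathrm{pw}}$.
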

 
\begin{proof}  We prove only part (ii) in detail.  
Part (i) follows by the same grid interpolation argument, using the temporal moment and H\"older estimates. It gives a version of \cite[Lemma~5.1]{Qian2026} that holds uniformly over compact time intervals.

Fix $t>0$, compact intervals $J,\widetilde J\subset\R$ with
$J\subset\operatorname{int}(\widetilde J)$, and
$0<\delta_x<\delta_{x,\mathrm{pw}}$. Set $P:=q_0/2$.
By \eqref{eq:delta-x-pw}, we can choose $m>1$ such that
$$
 \frac{H+\delta_x}{H-1/q_0}
 <m<
 \frac{P(\delta_x^*-\delta_x)+1}{2}.
$$
The strict inequalities allow us to choose
$\delta_x<\delta_1<\delta_2<\delta_x^*$ and
$0<\zeta<H-1/q_0$ such that
\begin{equation}\label{eq:pathwise-parameter-choice}
 m\zeta>H+\delta_x,
 \qquad
 P(\delta_2-\delta_1)+1-2m>0.
\end{equation} 

Since $\lambda$ is bounded away from zero on $\widetilde J$,
Theorem~\ref{thm:local-linearization}(ii), applied with $q=q_0$
and $p=P$, yields
\begin{equation}\label{eq:pathwise-space-moment}
 \|R^{\mathrm x}(t,x,y-x)\|_{P}
 \le C|y-x|^{H+\delta_2}
\end{equation}
for all $x,y\in\widetilde J$ with $|y-x|$ sufficiently small.

  Combining   \eqref{eq:U-increment-Lp} and \eqref{eq:exact-holder} with $p=q=q_0$, and using the compactness of $\widetilde J$, we obtain 
\[
 \mathbb E|u(t,x)-u(t,y)|^{q_0}
 +\mathbb E|U(t,x)-U(t,y)|^{q_0}
 \le C|x-y|^{q_0H},
 \qquad x,y\in \widetilde J.
\]

Since $q_0H>1$, Kolmogorov's continuity theorem
(see, e.g., \cite[p.~10]{DKMNX}) yields continuous versions of
$u(t,\cdot)$ and $U(t,\cdot)$,  whose sample paths are almost surely
$\zeta$-H\"older continuous on $\widetilde J$ for the chosen
$\zeta\in(0,H-1/q_0)$.  Their H\"older continuity and boundedness on $\widetilde J$,
together with the Lipschitz continuity of $\sigma$, imply
\begin{equation}\label{eq:pathwise-space-interpolation}
\begin{aligned}
  |R^{\mathrm x}(t,x,y-x)-R^{\mathrm x}(t,\bar x,\bar y-\bar x)| \le C(\omega)\bigl(|x-\bar x|^\zeta+|y-\bar y|^\zeta\bigr),
 \quad x,y,\bar x,\bar y\in \widetilde J,
\end{aligned}
\end{equation}
where $C(\omega)<\infty$ almost surely.

 Let $r_n=2^{-n}$, $a_n=r_n^m$, and
$$
 \mathcal G_n=(a_n\mathbb Z)^2\cap(\widetilde J\times\widetilde J).
$$
The interval $\widetilde J$ contains $O(a_n^{-1})$ grid points $x$. For each such $x$, the interval $[x-2r_n,x+2r_n]$ contains
$O(r_n/a_n)$ grid points $y$.  Hence,
\begin{equation}\label{eq:pathwise-grid-count}
 \#\{\,(x,y)\in\mathcal G_n:|y-x|\le2r_n\,\}
 \le Cr_na_n^{-2}=Cr_n^{1-2m}.
\end{equation}

 For all sufficiently large $n$,  by Markov's
inequality,   \eqref{eq:pathwise-space-moment},
and \eqref{eq:pathwise-grid-count},  we have 
\begin{align*}
 \PP\left\{
 \max_{\substack{(x,y)\in\mathcal G_n\\|y-x|\le2r_n}}
 |R^{\mathrm x}(t,x,y-x)|>r_n^{H+\delta_1}
 \right\} 
 \le &\,
 r_n^{-P(H+\delta_1)}
 \sum_{\substack{(x,y)\in\mathcal G_n\\|y-x|\le2r_n}}
 \|R^{\mathrm x}(t,x,y-x)\|_{P}^P\\
 \le &\,  Cr_n^{P(\delta_2-\delta_1)+1-2m}.
\end{align*}
By \eqref{eq:pathwise-parameter-choice}, the exponent in the last term 
is positive, so these probabilities are summable. The Borel--Cantelli 
lemma therefore yields, almost surely, for all sufficiently large $n$,
\begin{equation}\label{eq:pathwise-grid-bound}
 \max_{\substack{(x,y)\in\mathcal G_n\\|y-x|\le2r_n}}
 |R^{\mathrm x}(t,x,y-x)|\le r_n^{H+\delta_1}.
\end{equation} 

Since $J\subset\operatorname{int}(\widetilde J)$ and $a_n=o(r_n)$,
for all sufficiently large $n$ and all $x,y\in J$ with
$|y-x|\le r_n$, we can choose $(\bar x,\bar y)\in\mathcal G_n$
such that
$$
 |x-\bar x|+|y-\bar y|\le2a_n,
 \qquad
 |\bar y-\bar x|\le r_n+2a_n\le2r_n.
$$
 
Combining \eqref{eq:pathwise-space-interpolation} and
\eqref{eq:pathwise-grid-bound} yields, almost surely,
\[
 \sup_{\substack{x,y\in J\\|y-x|\le r_n}}
 |R^{\mathrm x}(t,x,y-x)|
 \le C(\omega)r_n^{m\zeta}+r_n^{H+\delta_1}
 =o(r_n^{H+\delta_x}).
\]
Since the supremum is nondecreasing in $r$ and $r_n/r_{n+1}=2$,
the estimate extends to arbitrary $r\downarrow0$, proving
\eqref{eq:spatial-pathwise} for the  chosen $J$ and $\delta_x$.

Taking the intersection over all $k\ge1$ and
$\gamma\in\mathbb Q\cap(0,\delta_{x,\mathrm{pw}})$,
we obtain a single event of probability one on which
\eqref{eq:spatial-pathwise} holds with $J=J_k:=[-k,k]$
and $\delta_x=\gamma$ for every such pair $(k,\gamma)$.
Now fix any compact interval $J\subset\R$ and
$0<\delta_x<\delta_{x,\mathrm{pw}}$.
Choose $k\ge1$ such that $J\subset J_k$ and
a rational number $\gamma$ satisfying
$\delta_x<\gamma<\delta_{x,\mathrm{pw}}$.
On the same probability-one event, the estimate on $J_k$
with exponent $\gamma$ also holds on $J$.
Since
\[
r^{H+\gamma}=o(r^{H+\delta_x})
\qquad\text{as }r\downarrow0,
\]
the desired assertion follows simultaneously for all
such $J$ and $\delta_x$.

 The proof is complete. 
 \end{proof}

\subsection{Proof of Proposition~\ref{thm:oscillations}}
\label{sec:gaussian-inputs}
 We recall two decompositions of $U$ in terms of fBm and smooth Gaussian corrections.

 \begin{lemma}\label{lem:completion}  
\begin{itemize}
\item[(i)] (\cite[Theorem~1(b)]{WangZhang2021}) For each fixed $t>0$, there exist a standard two-sided fBm
$\{B_t^H(y):y\in\R\}$ and a centered Gaussian process
$\{S_t(y):y\in\R\}$, with $S_t(0)=0$ and
$C^\infty(\R)$ sample paths, such that 
\begin{equation}\label{eq:completion}
 U(t,y)-U(t,0)
 =\frac1{\sqrt2}B_t^H(y)-S_t(y),
 \qquad y\in\R.
\end{equation}

\item[(ii)] (\cite[Lemma~6.1]{Qian2026})
For each fixed $x\in\R$, there exists a centered Gaussian process
$\{S(t): t\ge0\}$, independent of $\{U(t,x): t\ge0\}$, such that the process
\begin{equation}\label{eq:completion-time}
 B^{H/2}(t):=\gamma_H^{-1}[U(t,x)+S(t)],
 \qquad t\ge0,
\end{equation}
is a standard fBm. Moreover, $S$ admits a version with
$C^\infty(0,\infty)$ sample paths.
\end{itemize}
\end{lemma}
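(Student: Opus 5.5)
We will prove both parts by comparing covariance functions in the Fourier representation of the noise. In each case we enlarge the probability space so that it also carries an independent Gaussian object, we build the correction process from that object, and we then check that the sum has exactly the covariance of a standard fBm.

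\emph{Part (i).} Fix $t>0$. We extend the noise to all real times by adjoining an independent copy of $W$ on $(-\infty,0]\times\R$, and we define
$$
V(y):=\int_{-\infty}^t\int_\R\bigl[G_{t-s}(y-z)-G_{t-s}(-z)\bigr]W(\dd s,\dd z).
$$
Because the integrand is predictable and deterministic, the isometry \eqref{eq:stochastic-isometry} together with \eqref{eq:inner} gives
$$
\E|V(y)-V(y')|^2=c_{1,1}\int_0^\infty\!\!\int_\R e^{-2r\xi^2}|e^{iy\xi}-e^{iy'\xi}|^2|\xi|^{1-2H}\dd\xi\dd r
=\frac{c_{1,1}}2\int_\R|e^{i(y-y')\xi}-1|^2|\xi|^{-1-2H}\dd\xi .
$$
By the normalization \eqref{eq:c11}, the last expression equals $\tfrac12|y-y'|^{2H}$. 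Hence $B^H_t:=\sqrt2\,V$ is a standard two-sided fBm, and $V(0)=0$.

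The remaining piece is
$$
S_t(y):=V(y)-[U(t,y)-U(t,0)]=\int_{-\infty}^0\int_\R\bigl[G_{t-s}(y-z)-G_{t-s}(-z)\bigr]W(\dd s,\dd z),
$$
which satisfies $S_t(0)=0$. It is a centered Gaussian process whose spectral weight on $\xi$ is $c\,e^{-2t\xi^2}|\xi|^{-1-2H}|e^{iy\xi}-1|^2$. For every $k\ge1$, the $k$-th derivative in $y$ has spectral weight bounded by $e^{-2t\xi^2}|\xi|^{2k-1-2H}$, which is integrable. So each derivative exists in $L^2(\Omega)$ with Lipschitz increments, and Kolmogorov's theorem applied to all derivatives yields a $C^\infty$ version. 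This gives \eqref{eq:completion}.

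\emph{Part (ii).} Fix $x$ and take $s\le t$. By \eqref{eq:stochastic-isometry},
$$
\E[U(t,x)U(s,x)]=c_{1,1}\int_\R\frac{e^{-(t-s)\xi^2}-e^{-(t+s)\xi^2}}{2\xi^2}|\xi|^{1-2H}\dd\xi .
$$
Substituting $a=\xi^2$ and integrating gives $\kappa\bigl[(t+s)^H-(t-s)^H\bigr]$ for an explicit constant $\kappa>0$. The covariance of a multiple of standard fBm $B^{H/2}$ is proportional to $t^H+s^H-(t-s)^H$, so we need an independent $S$ with covariance $\kappa\bigl[t^H+s^H-(t+s)^H\bigr]$. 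This kernel is positive definite, as the identity
$$
t^H+s^H-(t+s)^H=c_H'\int_0^\infty(1-e^{-ta})(1-e^{-sa})a^{-1-H}\dd a
$$
shows. We therefore set
$$
S(t):=\sqrt{\kappa c_H'}\int_0^\infty(1-e^{-ta})a^{-(1+H)/2}Z(\dd a),
$$
where $Z$ is a Gaussian white noise on $(0,\infty)$ independent of $W$, adjoined by enlarging the space. Then $U(\cdot,x)+S$ is a centered Gaussian process with covariance $\kappa(t^H+s^H-|t-s|^H)$. Normalizing $\E[B^{H/2}(1)^2]=1$ gives $\gamma_H^2=2\kappa$, and we will evaluate $\kappa$ through Gamma-function integrals to confirm $\gamma_H^2=\Gamma(2H)/\Gamma(H)$.

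For smoothness of $S$ on $(0,\infty)$, note that its $k$-th time derivative has spectral weight $a^{2k}e^{-2ta}a^{-1-H}$. This is integrable uniformly for $t\ge t_0>0$, and Kolmogorov's theorem again gives a $C^\infty(0,\infty)$ version.

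The main obstacle is the bookkeeping of constants. In part (i) we must confirm that $\tfrac{c_{1,1}}2\int|e^{i\xi}-1|^2|\xi|^{-1-2H}\dd\xi=\tfrac12$. In part (ii) we must confirm that $2\kappa=\Gamma(2H)/\Gamma(H)$. We will do both by direct Gamma-integral evaluation, for instance using $\int_0^\infty(1-\cos u)u^{-1-2H}\dd u=\Gamma(1-2H)\cos(\pi H)/(2H)$ combined with the reflection formula. A secondary point is to justify that $W$ can be extended to negative times, and $Z$ adjoined, on an enlarged filtered space without altering the laws of $U$ and $u$.
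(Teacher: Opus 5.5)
Your proposal is correct. The paper gives no argument of its own for this lemma; it simply cites \cite[Theorem~1(b)]{WangZhang2021} and \cite[Lemma~6.1]{Qian2026}. Your two constructions appear to be the standard ones behind those results. The first is the negative-time completion of the noise; your variance computation for $V$ is exactly the energy identity of Lemma~\ref{lem:energy}, which the paper says is implicit in Wang--Zhang. The second is a Lei--Nualart-type decomposition of the bifractional covariance $\kappa[(t+s)^H-|t-s|^H]$.

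The constants also check out. With $\kappa=c_{1,1}\Gamma(1-H)/(2H)$ and $\Gamma(H)\Gamma(1-H)=\pi/\sin(\pi H)$, one gets $2\kappa=\Gamma(2H)/\Gamma(H)=\gamma_H^2$. Enlarging the space by an independent product factor is harmless for the later almost-sure applications.
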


We recall Khinchin's and Chung's LILs,
together with the L\'evy-type exact uniform modulus of continuity
for fBm; see, e.g., \cite[Section~7]{LiShao2001},
\cite[Theorem~3.3]{MR1995}, and \cite[Theorem~1.2]{Tal96}.

\begin{lemma} \label{lem:fbm-laws} Let $B^\alpha$ be a standard fBm with $0<\alpha<1$,
defined on either $[0,\infty)$ or $\R$. 
\begin{itemize}
\item[(i)] \emph{(Khinchin-type LIL).}
For each fixed $z_0$ in the parameter domain of $B^\alpha$,
 
\begin{equation}\label{eq:fbm-khinchin}
 \limsup_{h\downarrow0}
 \frac{B^\alpha(z_0+h)-B^\alpha(z_0)}
      {h^\alpha\sqrt{2\log\log(1/h)}}
 =1 \ \ \ \text{a.s.}
\end{equation}

\item[(ii)] \emph{(Chung-type  LIL).} 
For each fixed $z_0$ in the parameter domain of $B^\alpha$,
\begin{equation}\label{eq:fbm-chung}
 \liminf_{r\downarrow0}\sup_{0\le h\le r}
 \frac{ 
       |B^\alpha(z_0+h)-B^\alpha(z_0)|}
      {(r/\log\log(1/r))^\alpha}
 =b_\alpha^\alpha  \ \ \ \text{a.s.,}
\end{equation}
where $b_\alpha$ is defined in \eqref{eq:fbm-small-constant}.

  \item[(iii)] \emph{(Exact uniform modulus of continuity).}
For each nondegenerate compact interval $I$, 
 \[
 \lim_{r\downarrow0}\sup_{\substack{s,t\in I\\ |t-s|\le r}}
 \frac{          |B^\alpha(t)-B^\alpha(s)|}
      {r^\alpha\sqrt{2\log(1/r)}}
 =1   \ \ \ \text{a.s.}
\]
\end{itemize}
\end{lemma}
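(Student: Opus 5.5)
These are classical facts about fBm. I would not reprove them from scratch. The plan is to reduce each statement to the standard setting $z_0=0$ on $[0,\infty)$ and then invoke, or sketch, the known arguments in \cite{LiShao2001,MR1995,Tal96}.

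\emph{Reduction.} The first step is to use the stationarity of increments of $B^\alpha$. For fixed $z_0$ in the parameter domain (either $[0,\infty)$ or $\R$), the process $h\mapsto B^\alpha(z_0+h)-B^\alpha(z_0)$, $h\ge0$, is again a standard one-sided fBm. So it suffices to prove (i) and (ii) at $z_0=0$. For (iii), stationarity of increments likewise reduces the claim to $I=[0,1]$. Self-similarity, $B^\alpha(c\,\cdot)\overset{d}{=}c^\alpha B^\alpha(\cdot)$, will be used throughout to move between scales.

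\emph{Khinchin LIL (i).} The upper bound comes from a Borel--Cantelli argument along $h_n=\theta^n$ with $\theta<1$. I would combine the Gaussian tail $\PP\{B^\alpha(h)>x h^\alpha\}\le e^{-x^2/2}$ with a chaining/Borell--TIS bound that controls $\sup_{h_{n+1}\le h\le h_n}$. Letting $\theta\uparrow1$ then gives $\limsup\le1$.

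For the lower bound I would take a super-geometric sequence such as $h_n=n^{-n}$. Since the normalized variables $B^\alpha(h_n)/h_n^\alpha$ have correlations tending to $0$ fast enough, a Gaussian comparison (Slepian, or the second Borel--Cantelli lemma after decorrelation) yields $\limsup\ge1$.

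\emph{Chung LIL (ii).} This is the Monrad--Rootz\'en theorem \cite{MR1995}, with the constant $b_\alpha$ from \cite{LiLinde1998}. By scaling, the probability that $\sup_{0\le h\le r}|B^\alpha(h)|\le \lambda (r/\log\log(1/r))^\alpha$ equals the small-ball probability at level $\varepsilon=\lambda(\log\log(1/r))^{-\alpha}$. By \eqref{eq:fbm-small-constant} this probability is $\exp(-b_\alpha\lambda^{-1/\alpha}\log\log(1/r)(1+o(1)))$.

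Along $r_n=e^{-n^{1+\eta}}$, Borel--Cantelli gives the lower bound $\liminf\ge b_\alpha^\alpha$. The upper bound needs the reverse Borel--Cantelli direction, and I expect this to be the main obstacle, because fBm increments are long-range dependent. I would first try the approach of \cite{MR1995,LiShao2001}. That approach splits $B^\alpha$ via its moving-average/spectral representation into a part driven by the noise on the current scale window, which is nearly independent across $n$, and a remainder whose supremum is negligible with high probability.

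\emph{Uniform modulus (iii).} The upper bound follows from a Dudley/Fernique chaining bound together with Gaussian tails on a grid of mesh $r$, which gives at most $\sqrt{2\log(1/r)}$ asymptotically. For the lower bound I would consider $\asymp r^{-1+o(1)}$ disjoint increments of length $r$ on $[0,1]$. Because their correlations decay, a Slepian/Sudakov comparison with i.i.d.\ Gaussians shows that their maximum is at least $(1-o(1))\,r^\alpha\sqrt{2\log(1/r)}$. This part is Theorem~1.2 of \cite{Tal96}, and I would cite it directly.
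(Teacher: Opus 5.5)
Your proposal matches the paper, which gives no proof of this lemma and simply cites \cite{LiShao2001,MR1995,Tal96} for these classical fBm facts; relying on those citations, after reducing to $z_0=0$ by stationarity of increments and self-similarity, is exactly what is done there. If you write out your sketches, correct two points. First, the Borel--Cantelli step for $\liminf\ge b_\alpha^\alpha$ in (ii) must use a geometric sequence $r_n=\theta^n$ with $\theta\uparrow1$: along $r_n=e^{-n^{1+\eta}}$ the ratio $\phi(r_{n+1})/\phi(r_n)$, with $\phi(r)=(r/\log\log(1/r))^\alpha$, tends to $0$, so monotone interpolation between grid points fails (sparse sequences belong to the reverse inequality). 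Second, in (iii) a Slepian comparison with i.i.d.\ Gaussians gives the lower bound only for $\alpha\le1/2$; for $\alpha>1/2$ the increments are positively correlated, so you must compare well-separated increments with an equicorrelated Gaussian family whose common correlation tends to $0$.
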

 \begin{proof}[Proof of Proposition~\ref{thm:oscillations}]
Fix $(t,x)\in(0,\infty)\times\R$. We follow the argument
in~\cite{FKM2015,WangXiao2024}.
Let $\alpha=H$ for spatial increments and $\alpha=H/2$ for
temporal increments. By Lemmas~\ref{lemma:pathwise} and
\ref{lem:completion}, the remainders and smooth correction terms
are negligible relative to
$$
 r^\alpha\sqrt{2\log\log(1/r)},
 \qquad
 \left(\frac{r}{\log\log(1/r)}\right)^\alpha,
$$
the normalizing factors in Lemma~\ref{lem:fbm-laws}(i)--(ii),
respectively.   The assertions then follow.
\end{proof}

\begin{remark}\label{rem:normalized-moduli}
Lemma~\ref{lemma:pathwise}, together with
Lemmas~\ref{lem:completion} and \ref{lem:fbm-laws}(iii), also yields
the following coefficient-normalized uniform moduli of continuity,
analogous to those obtained by Hu and
Lee~\cite[Theorem~1.2]{HuLee2026}. More precisely,
if $\delta_{x,\mathrm{pw}}>0$, then for each fixed $t>0$, 
every nondegenerate compact interval $J\subset\R$ with
$$
 \inf_{x\in J}|\sigma(u(t,x))|>0
$$
satisfies
\begin{align}\label{eq Lee1}
 \lim_{r\downarrow0}
 \sup_{\substack{x,y\in J\\ |y-x|\le r}}
 \frac{|u(t,y)-u(t,x)|}
 {r^H\sqrt{2\log(1/r)}\,|\sigma(u(t,x))|}
 =\frac1{\sqrt2} \ \ \ \text{a.s.}
\end{align}
Similarly, if $\delta_{t,\mathrm{pw}}>0$, then for each fixed $x\in\R$,  every nondegenerate compact interval
$I\subset(0,\infty)$ with
$$
 \inf_{s\in I}|\sigma(u(s,x))|>0
$$
satisfies
\begin{align}\label{eq Lee2}
 \lim_{r\downarrow0}
 \sup_{\substack{s,t\in I\\ |t-s|\le r}}
 \frac{|u(t,x)-u(s,x)|}
 {r^{H/2}\sqrt{2\log(1/r)}\,|\sigma(u(s,x))|}
 =\gamma_H \ \ \ \text{a.s.}
\end{align}

Since $|\sigma(u)|$ is bounded away from zero on each such interval,
the remainders and smooth Gaussian corrections remain uniformly
negligible under the corresponding normalizations.
Both assertions therefore follow from the exact uniform moduli
of continuity of fBm.
The lower bound in Theorem~\ref{thm:uniform-moduli}, however,
requires an additional localization argument near points where
$|\sigma(u)|$ is close to its maximum on the compact interval.
We present this argument in the next subsection.

\end{remark}

\subsection{Proof of Theorem~\ref{thm:uniform-moduli}}\label{sec:spatial-modulus}

\begin{proof}[Proof of Theorem~\ref{thm:uniform-moduli}]
For the spatial assertion \eqref{eq:spatial-modulus}, assume
$\delta_{x,\mathrm{pw}}>0$ and fix $t>0$.
Let
\[
\mathcal Q=\{[a,b]:a,b\in\mathbb Q,\ a<b\}.
\]

\noindent\emph{Step 1: A common probability-one event.}
Let $B_t^H$ and $S_t$ be the processes in
Lemma~\ref{lem:completion}(i). By Lemma~\ref{lem:fbm-laws}(iii)
and the countability of $\mathcal Q$, there is a probability-one
event on which
\begin{align}\label{eq LILsim}
\lim_{r\downarrow0}
\sup_{\substack{x,y\in I\\ |y-x|\le r}}
\frac{|B_t^H(y)-B_t^H(x)|}{r^H\sqrt{2\log(1/r)}}
=1
\qquad\text{for every }I\in\mathcal Q.
\end{align}

 Work on a common probability-one event $\Omega_t^{\mathrm{sp}}$
on which \eqref{eq LILsim} and the conclusion of  Lemma~\ref{lemma:pathwise}(ii) hold,
$S_t$ is smooth, and $\sigma(u(t,\cdot))$ is continuous.
For an arbitrary nondegenerate compact interval
$J=[a,b]\subset\R$, set
\[
M_J:=\max_{x\in J}|\sigma(u(t,x))|.
\]

   \noindent\emph{Step 2: Upper bound.} Fix $0<\delta_x<\delta_{x,\mathrm{pw}}$.
By \eqref{eq Rx} and Lemma~\ref{lem:completion}(i),
for $x,y\in J$ with $|y-x|\le r$, we have 
\[
\begin{aligned}
&\left|
u(t,y)-u(t,x)
-\frac{\sigma(u(t,x))}{\sqrt2}
\bigl(B_t^H(y)-B_t^H(x)\bigr)
\right|\\
 \le&\, 
|R^{\mathrm x}(t,x,y-x)|
+
|\sigma(u(t,x))|\,|S_t(y)-S_t(x)|\\
 \le &\, 
 |R^{\mathrm x}(t,x,y-x)|
+
M_J\sup_{z\in J}|S_t'(z)|\,r.
\end{aligned}
\]
Here we used the smoothness of $S_t$ on $\R$ and the fact that
$J$ is compact. Define
\[
d_J(r)
:=
\sup_{\substack{x,y\in J\\ |y-x|\le r}}
|R^{\mathrm x}(t,x,y-x)|
+
M_J\sup_{z\in J}|S_t'(z)|\,r.
\]

Since $H<1$ and $\delta_x>0$, \eqref{eq:spatial-pathwise} gives 
\begin{align}\label{eq dj}
\lim_{r\downarrow 0}\frac{d_J(r)}{r^H\sqrt{2\log(1/r)}}
=0, \ \ \text{ a.s.}
\end{align}

Let
\[
\omega_{t,J}(r)
:=
\sup_{\substack{x,y\in J\\ |y-x|\le r}}
|u(t,y)-u(t,x)|.
\]
Taking the supremum in the above estimate gives
\[
\omega_{t,J}(r)
\le
\frac{M_J}{\sqrt2}
\sup_{\substack{x,y\in J\\ |y-x|\le r}}
|B_t^H(y)-B_t^H(x)|
+d_J(r).
\] 
Choose $K\in\mathcal Q$ containing $J$. By \eqref{eq LILsim} and \eqref{eq dj}, we have
\[
\limsup_{r\downarrow0}
\frac{\omega_{t,J}(r)}{r^H\sqrt{2\log(1/r)}}
\le \frac{M_J}{\sqrt2}.
\]

\noindent\emph{Step 3: Lower bound.}
If $M_J=0$, the conclusion follows immediately from the upper bound.
Assume now that $M_J>0$ and fix $0<\eta<M_J$.

By the continuity of $\sigma(u(t,\cdot))$, the set
\[
\{x\in(a,b): |\sigma(u(t,x))|>M_J-\eta\}
\]
is a nonempty open subset of $(a,b)$. Hence, it contains a
nondegenerate compact interval
$I_\eta\subset J$ with rational endpoints such that
\[
|\sigma(u(t,x))|\ge M_J-\eta,
\qquad x\in I_\eta.
\]
Thus $I_\eta\in\mathcal Q$.

For $x,y\in I_\eta$ with $|y-x|\le r$, the estimate from
Step~2 gives
\[
|u(t,y)-u(t,x)|
\ge
\frac{M_J-\eta}{\sqrt2}
|B_t^H(y)-B_t^H(x)|
-d_J(r).
\]
By \eqref{eq LILsim} and \eqref{eq dj},
\[
\liminf_{r\downarrow0}
\frac{\omega_{t,J}(r)}{r^H\sqrt{2\log(1/r)}}
\ge
\frac{M_J-\eta}{\sqrt2}.
\]
Letting $\eta\downarrow0$ and combining this lower bound with
the upper bound proves \eqref{eq:spatial-modulus} simultaneously for
every nondegenerate compact interval $J$ on $\Omega_t^{\mathrm{sp}}$.

For the temporal assertion, fix $x\in\R$ and assume
$\delta_{t,\mathrm{pw}}>0$.  The same argument applies with
Lemmas \ref{lemma:pathwise}(i), \ref{lem:completion}(ii), and \ref{lem:fbm-laws}(iii),   replacing $H$ and
$1/\sqrt{2}$ by $H/2$ and $\gamma_H$, respectively.
Using compact intervals with rational endpoints in $(0,\infty)$
to construct the common probability-one event yields
\eqref{eq:temporal-modulus} simultaneously for every nondegenerate
compact interval $I\subset(0,\infty)$.

The proof is complete. 
\end{proof}

\appendix
\section{Auxiliary heat-kernel estimates}\label{app:kernels}
\subsection{A fractional-difference estimate for Schwartz functions}

The following estimate extends \cite[Lemma~2.10]{HW2022}
from the heat kernel to Schwartz functions. We will use it
for derivatives of the heat kernel.

\begin{lemma}\label{lem:schwartz}
Let $0<H<1/2$. For every Schwartz function $f$ on $\R$,
\begin{equation}\label{eq:schwartz}
Q_Hf(z)
:=
\int_\R |f(z+h)-f(z)|^2|h|^{2H-2}\dd h
\le
C_{f,H}(1+|z|)^{2H-2},
\quad z\in\R.
\end{equation}
\end{lemma}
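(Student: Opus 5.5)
For bounded $|z|$ (say $|z|\le 2$) the bound reduces to showing that $Q_Hf$ is uniformly bounded, and the weight $(1+|z|)^{2H-2}$ is then comparable to a constant. To see the uniform bound, split the $h$-integral at $|h|=1$. For $|h|\le1$ we use $|f(z+h)-f(z)|\le \|f'\|_\infty|h|$, so the integrand is at most $\|f'\|_\infty^2|h|^{2H}$, which is integrable near the origin. For $|h|>1$ we use $|f(z+h)-f(z)|^2\le 4\|f\|_\infty^2$, and $|h|^{2H-2}$ is integrable at infinity because $2H-2<-1$. Together these give $\sup_z Q_Hf(z)<\infty$. The real content of the lemma is therefore the decay for large $|z|$. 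By symmetry ($z\mapsto -z$, replacing $f$ by $f(-\cdot)$) it suffices to treat $z\ge 2$.

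For $z\ge2$ we split $\R$ into three regions according to whether $z+h$ stays away from the origin.

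\emph{Region (a): $|h|\le z/2$.} Here $z+h\ge z/2$, so by the mean value theorem and the Schwartz decay of $f'$,
\[
|f(z+h)-f(z)|\le |h|\sup_{w\ge z/2}|f'(w)|\le C_N|h|(1+z)^{-N}.
\]
This contributes at most $C_N(1+z)^{-2N}\int_{|h|\le z/2}|h|^{2H}\,\dd h\le C(1+z)^{-2N+2H+1}$, which is negligible for large $N$.

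\emph{Region (b): $|h|>z/2$.} Bound $|f(z+h)-f(z)|^2\le 2|f(z+h)|^2+2|f(z)|^2$.

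The $|f(z)|^2$ term gives $|f(z)|^2\int_{|h|>z/2}|h|^{2H-2}\,\dd h\le C_N(1+z)^{-N}z^{2H-1}$, again negligible.

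The $|f(z+h)|^2$ term carries the leading behaviour. Set $w=z+h$ and further split into two pieces.
\begin{itemize}
\item[(b1)] If $|w|\le z/2$, then $|h|=|w-z|\ge z/2$, so $|h|^{2H-2}\le Cz^{2H-2}$. This piece is bounded by $Cz^{2H-2}\int_\R|f(w)|^2\,\dd w = C\|f\|_{L^2}^2\,z^{2H-2}$, which is exactly the claimed rate.
\item[(b2)] If $|w|>z/2$, then $|f(w)|^2\le C_N(1+z)^{-N}(1+|w|)^{-2}$, while $|h|^{2H-2}\le (z/2)^{2H-2}$ because $|h|>z/2$. This piece is also negligible.
\end{itemize}

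Summing the contributions gives $Q_Hf(z)\le C_{f,H}(1+z)^{2H-2}$ for $z\ge2$. The constant $C_{f,H}$ depends only on finitely many Schwartz seminorms of $f$ (namely $\|f\|_\infty$, $\|f\|_{L^2}$, and weighted sup norms of $f$ and $f'$) together with $H$. This is useful later, when the lemma is applied to heat-kernel derivatives with time uniform in a compact set.

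None of these steps is hard. The one point needing care is recognising that the sharp exponent $2H-2$ comes only from the region where $z+h$ lies near the bulk of $f$, i.e.\ $h\approx -z$, where $|h|^{2H-2}\asymp z^{2H-2}$. Every other region is controlled by the rapid decay of $f$ and $f'$, and the three-way split above is designed to isolate exactly that region.
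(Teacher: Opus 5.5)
Your proof is correct and follows essentially the same route as the paper. Both arguments use the same bounded-$|z|$ estimate, the same split at $|h|=|z|/2$ with mean-value and decay bounds on the near region, and $|a-b|^2\le 2|a|^2+2|b|^2$ on the far region, where the term $\|f\|_{L^2}^2|z|^{2H-2}$ gives the sharp rate. Your further split into (b1) and (b2) is harmless but unnecessary, since $|h|>z/2$ already gives $|h|^{2H-2}\le (z/2)^{2H-2}$ on all of region (b).
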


\begin{proof} For $|z|\le2$, we apply the mean-value theorem when $|h|\le1$
and use the boundedness of $f$ when $|h|>1$ to obtain 
$$
\begin{aligned}
Q_Hf(z)
\le&\,
\norm{f'}_\infty^2
\int_{|h|\le1}|h|^{2H}\dd h
+
4\norm f_\infty^2
\int_{|h|>1}|h|^{2H-2}\dd h\\
\le&\, C_{f,H}.
\end{aligned}
$$

Now let $|z|>2$ and split the integral at $|h|=|z|/2$.
For $|h|\le|z|/2$, the rapid decay of $f$ and $f'$ yields
$$
|f(z+h)-f(z)|
\le
C_f(1+|z|)^{-1}(|h|\wedge1),
$$
and hence
$$
\int_{|h|\le|z|/2}
|f(z+h)-f(z)|^2|h|^{2H-2}\dd h
\le
C_{f,H}(1+|z|)^{-2}.
$$
On the complementary region, using
$|a-b|^2\le2|a|^2+2|b|^2$, we obtain 
$$
\begin{aligned}
 \int_{|h|>|z|/2}
|f(z+h)-f(z)|^2|h|^{2H-2}\dd h 
\le&\,
C_H|z|^{2H-2}\norm f_{L^2(\R)}^2
+
C_H|f(z)|^2|z|^{2H-1}\\
\le&\,
C_{f,H}(1+|z|)^{2H-2}.
\end{aligned}
$$
Since
$$
(1+|z|)^{-2}\le(1+|z|)^{2H-2},
$$
the desired estimate follows.
\end{proof}

\subsection{Weighted heat-kernel estimates for temporal increments}\label{sec:temporal-kernels}

 Recall the heat-kernel increment $D_r$ defined in \eqref{eq:Dr}, and the constant $a_q$ defined in \eqref{eq a}.

\begin{lemma}\label{t:lem:kernels}
Let $q_H<q\le q_0$, $0<r\le T$, $0<\e<1$, and
$$
0\le\vartheta<\frac{1-2H-a_q}{4}.
$$ 
Then, uniformly in \(x\in\R\), 
 \begin{align}
\int_\R G_r(x-y)^2\lambda(y)^{-2/q}\dd y
\le  &\, C\lambda(x)^{-2/q}r^{-1/2},\label{t:eq:kernel-L2}\\ 
\int_{\R^2}|D_r(x-y,h)|^2|h|^{2H-2}|x-y|^{4\vartheta}\lambda(y)^{-2/q}\dd y\dd h
\le  &\, C\lambda(x)^{-2/q}r^{H+2\vartheta-1},\label{t:eq:kernel-weighted}\\
\int_\R |G_{r+\e}(x-y)-G_r(x-y)|^2\lambda(y)^{-2/q}\dd y
\le  &\, C\lambda(x)^{-2/q}\e^2r^{-5/2},\label{t:eq:kernel-time-L2}\\
\int_{\R^2}|D_{r+\e}(x-y,h)-D_r(x-y,h)|^2|h|^{2H-2}\lambda(y)^{-2/q}\dd y\dd h
\le  &\, C\lambda(x)^{-2/q}\e^2r^{H-3}.\label{t:eq:kernel-time-H}
\end{align}
 
\end{lemma}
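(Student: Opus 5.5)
The plan is to reduce all four estimates to unweighted moment bounds for heat-kernel expressions, using the weight-ratio inequality \eqref{eq:weight-ratio}. Set $z=x-y$. Then \eqref{eq:weight-ratio} gives
\[
\lambda(y)^{-2/q}=\lambda(x-z)^{-2/q}\le C\lambda(x)^{-2/q}(1+|z|)^{a_q},
\qquad a_q=4(1-H)/q<1-2H .
\]
So it suffices to bound each kernel integral after replacing $\lambda(y)^{-2/q}$ by $(1+|z|)^{a_q}$. Since $0<r\le T$, we also have $(1+|z|)^{a_q}\le C(1+|z|/\sqrt r)^{a_q}$, because $\sqrt r\le \sqrt T$. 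After rescaling $z=\sqrt r\,w$ (and $h=\sqrt r\,k$), the weight only contributes a polynomial factor in the scaled variable, which the Gaussian tails absorb wherever the integrand is Gaussian.

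\eqref{t:eq:kernel-L2} is then immediate: $\int G_r(z)^2(1+|z|)^{a_q}\,\dd z\le Cr^{-1/2}$. For \eqref{t:eq:kernel-time-L2}, write $G_{r+\e}-G_r=\int_r^{r+\e}\partial_sG_s\,\dd s$ and use $|\partial_sG_s(z)|\le Cs^{-3/2}g(z/\sqrt{s})$ with $g$ Gaussian. Minkowski's inequality then gives $\e^2r^{-3}\cdot r^{1/2}=\e^2r^{-5/2}$. In this term I take $\e\le r$. If $\e>r$, the crude bound $Cr^{-1/2}\le C\e^2 r^{-5/2}$ holds trivially, so no case is lost.

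The fractional terms \eqref{t:eq:kernel-weighted} and \eqref{t:eq:kernel-time-H} need Lemma~\ref{lem:schwartz}. By scaling, $D_r(z,h)=r^{-1/2}[G_1(w+k)-G_1(w)]$, so
\[
\int_{\R}|D_r(z,h)|^2|h|^{2H-2}\,\dd h=r^{H-3/2}\,Q_HG_1(w)\le Cr^{H-3/2}(1+|w|)^{2H-2}.
\]
Multiplying by $|z|^{4\vartheta}(1+|z|)^{a_q}\le C r^{2\vartheta}|w|^{4\vartheta}(1+|w|)^{a_q}$ and integrating $\dd z=\sqrt r\,\dd w$ gives $r^{H+2\vartheta-1}$. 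This requires
\[
\int_{\R}(1+|w|)^{2H-2+4\vartheta+a_q}\,\dd w<\infty,
\qquad\text{i.e.}\qquad 4\vartheta+a_q<1-2H,
\]
which is exactly the hypothesis on $\vartheta$. The mild polynomial weight therefore survives only because $\vartheta$ and $q$ are restricted, and this is the main obstacle: the bound $(1+|w|)^{2H-2}$ from Lemma~\ref{lem:schwartz} decays only polynomially, not in Gaussian fashion. The weight $(1+|z|)^{a_q}$ must be converted to the scaled variable without loss. Since $r\le T$, $(1+|z|)^{a_q}\le C(1+|w|)^{a_q}$, so the conversion is harmless. For \eqref{t:eq:kernel-weighted} with $\vartheta=0$ this is the case used repeatedly.

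For \eqref{t:eq:kernel-time-H}, write
\[
D_{r+\e}(z,h)-D_r(z,h)=\int_r^{r+\e}\partial_sD_s(z,h)\,\dd s,
\qquad \partial_sD_s(z,h)=s^{-3/2}\bigl[f(w_s+k_s)-f(w_s)\bigr],
\]
where $f=\partial_x^2G_1$ is Schwartz (up to normalization) and $w_s,k_s$ are the variables scaled by $\sqrt s$. Minkowski's inequality in the $L^2(\dd z\,|h|^{2H-2}\dd h)$ norm, with weight $(1+|z|)^{a_q}$, bounds the left side by
\[
\Bigl(\int_r^{r+\e}\|\partial_sD_s\|\,\dd s\Bigr)^2 .
\]
Applying Lemma~\ref{lem:schwartz} to $f$ gives $\|\partial_sD_s\|^2\le Cs^{-3}s^{H-1/2}s^{1/2}=Cs^{H-3}$, using the same integrability condition with $\vartheta=0$. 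Hence the bound is $\le C\e^2 r^{H-3}$ when $\e\le r$. When $\e>r$, the triangle inequality and \eqref{t:eq:kernel-weighted} with $\vartheta=0$ give $Cr^{H-1}\le C\e^2r^{H-3}$. Both cases are covered.
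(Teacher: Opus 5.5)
Your proof is correct and follows essentially the same route as the paper. Both arguments use the weight-ratio bound to replace $\lambda(y)^{-2/q}$ by $(1+|z|)^{a_q}$, handle the fractional terms by parabolic scaling and Lemma~\ref{lem:schwartz} under $a_q+4\vartheta<1-2H$, and write the time differences as $\int_0^\e\partial_s(\cdot)_{r+s}\,\dd s$ before applying Minkowski's inequality. Your case split $\e\le r$ versus $\e>r$ is harmless but unnecessary. The bounds $(r+s)^{-5/4}\le r^{-5/4}$ and $(r+s)^{(H-3)/2}\le r^{(H-3)/2}$ give the estimates for all $0<\e<1$ directly, with uniform constants because $r+s\le T+1$.
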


 \begin{proof}
The scaling identities
\[
G_r(z)=r^{-1/2}G_1(z/\sqrt r),
\qquad
D_r(z,h)=r^{-1/2}D_1(z/\sqrt r,h/\sqrt r),
\]
together with the substitutions
$y=x-\sqrt r\,z$, $h=\sqrt r\,w$, and
Lemma~\ref{lem:initial-smoothing}(i), reduce
\eqref{t:eq:kernel-L2} and \eqref{t:eq:kernel-weighted}
to the finiteness of
\[
\int_\R G_1(z)^2(1+|z|)^{a_q}\dd z
\]
and
\[
\int_{\R^2}
|D_1(z,w)|^2|w|^{2H-2}|z|^{4\vartheta}
(1+|z|)^{a_q}\dd z\dd w,
\]
respectively.
The first integral is finite by Gaussian decay, while
Lemma~\ref{lem:schwartz} yields
\[
\int_{\R^2}
|D_1(z,w)|^2|w|^{2H-2}|z|^{4\vartheta}
(1+|z|)^{a_q}\dd z\dd w
\le
C\int_\R
|z|^{4\vartheta}(1+|z|)^{a_q+2H-2}\dd z<\infty,
\]
since $a_q+4\vartheta<1-2H$.

Since
\[
\partial_rG_r(z)
=
r^{-3/2}
\left(\frac{z^2}{4r}-\frac12\right)
G_1(z/\sqrt r),
\]
and $(w^2/4-1/2)G_1(w)$ is a Schwartz function,
the same scaling arguments give
\begin{align}
\int_\R|\partial_rG_r(x-y)|^2
\lambda(y)^{-2/q}\dd y
&\le
C\lambda(x)^{-2/q}r^{-5/2},
\label{eq:derivative-temporal}\\
\int_{\R^2}
|\partial_rG_r(x+h-y)-\partial_rG_r(x-y)|^2
|h|^{2H-2}\lambda(y)^{-2/q}
\dd h\dd y
&\le
C\lambda(x)^{-2/q}r^{H-3}.
\label{eq:derivative-spatial}
\end{align}
Since $r\le T$ and $0<\e<1$, these estimates remain valid,
with a uniform constant, when $r$ is replaced by $r+s$
for $0\le s\le\e$.

The identity
$$
G_{r+\e}-G_r
=
\int_0^\e \partial_sG_{r+s}\dd s,
$$
together with Minkowski's inequality and
\eqref{eq:derivative-temporal}, gives 
\[
\begin{aligned}
&\int_\R
|G_{r+\e}(x-y)-G_r(x-y)|^2
\lambda(y)^{-2/q}\dd y\\
\le&\,
\left(
\int_0^\e
\left(
\int_\R
|\partial_sG_{r+s}(x-y)|^2
\lambda(y)^{-2/q}\dd y
\right)^{1/2}
\dd s
\right)^2\\
\le&\,
C\lambda(x)^{-2/q}
\left(
\int_0^\e(r+s)^{-5/4}\dd s
\right)^2\\
\le&\,
C\lambda(x)^{-2/q}\e^2r^{-5/2}.
\end{aligned}
\]

Similarly,
\[
D_{r+\e}(x,h)-D_r(x,h)
=
\int_0^\e\partial_sD_{r+s}(x,h)\dd s.
\]
Hence, by Minkowski's inequality and
\eqref{eq:derivative-spatial},
\[
\begin{aligned}
&\int_{\R^2}
|D_{r+\e}(x-y,h)-D_r(x-y,h)|^2
|h|^{2H-2}\lambda(y)^{-2/q}\dd y\dd h\\
\le&\,
\left(
\int_0^\e
\left(
\int_{\R^2}
|\partial_sD_{r+s}(x-y,h)|^2
|h|^{2H-2}\lambda(y)^{-2/q}
\dd y\dd h
\right)^{1/2}
\dd s
\right)^2\\
\le&\,
C\lambda(x)^{-2/q}
\left(
\int_0^\e(r+s)^{(H-3)/2}\dd s
\right)^2\\
\le&\,
C\lambda(x)^{-2/q}\e^2r^{H-3}.
\end{aligned}
\]
The proof is complete.
\end{proof}

 The following integrated estimates are immediate consequences of
Lemma~\ref{t:lem:kernels} and will be used in the proof of the
temporal local linearization.

\begin{lemma}\label{t:lem:integrated}
Let $q_H<q\le q_0$ and $0<\beta<1$.  There exists a constant $C>0$ such that the following estimates
hold uniformly for $x\in\R$ and all sufficiently small
$\varepsilon>0$.  
\begin{itemize}
\item[(i)]
\begin{equation}\label{t:eq:integrated-remote}
\begin{aligned}
&\int_{\e^\beta}^T\int_{\R^2}
|D_{r+\e}(x-y,h)-D_r(x-y,h)|^2
|h|^{2H-2}\lambda(y)^{-2/q}
\dd y\dd h\dd r\\
\le&\,
C\lambda(x)^{-2/q}\e^{2+\beta(H-2)}.
\end{aligned}
\end{equation}

\item[(ii)]
\begin{equation}\label{t:eq:integrated-L2}
\int_0^T\int_\R
|G_{r+\e}(x-y)-G_r(x-y)|^2
\lambda(y)^{-2/q}\dd y\dd r
\le
C\lambda(x)^{-2/q}\e^{1/2}.
\end{equation}

\item[(iii)]
If
$$
0\le\vartheta<\frac{1-2H-a_q}{4},
$$
then
\begin{equation}\label{t:eq:integrated-weighted}
\begin{aligned}
&\int_0^{\e^\beta}\int_{\R^2}
\bigl(
|D_r(x-y,h)|^2
+
|D_{r+\e}(x-y,h)|^2
\bigr)
|h|^{2H-2}|x-y|^{4\vartheta}
\lambda(y)^{-2/q}
\dd y\dd h\dd r\\
\le&\,
C\lambda(x)^{-2/q}\e^{\beta(H+2\vartheta)}.
\end{aligned}
\end{equation}

\end{itemize}
\end{lemma}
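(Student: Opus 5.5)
The three estimates follow by integrating the pointwise-in-$r$ bounds of Lemma~\ref{t:lem:kernels} over the stated time ranges. Throughout, $\e<1$ is assumed small enough that $\e^\beta<T$.

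\emph{Part (i).} Apply \eqref{t:eq:kernel-time-H} for each $r\in[\e^\beta,T]$ and integrate:
\[
\int_{\e^\beta}^T \e^2 r^{H-3}\,\dd r\le \frac{\e^2}{2-H}\,\e^{\beta(H-2)}.
\]
This is the claimed bound $C\lambda(x)^{-2/q}\e^{2+\beta(H-2)}$, uniformly in $x$.

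\emph{Part (ii).} Split the $r$-integral at $r=\e$.

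For $r\ge\e$, use \eqref{t:eq:kernel-time-L2}. Since
\[
\int_\e^\infty \e^2 r^{-5/2}\,\dd r=\tfrac23\,\e^{1/2},
\]
this piece is bounded by $C\lambda(x)^{-2/q}\e^{1/2}$.

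For $r<\e$, use $|a-b|^2\le 2a^2+2b^2$ and apply \eqref{t:eq:kernel-L2} to $G_r$ and to $G_{r+\e}$. Note that \eqref{t:eq:kernel-L2} holds for $r+\e\le T+1$ with an adjusted constant, since its proof is pure scaling. This gives the bound $C\lambda(x)^{-2/q}\bigl(r^{-1/2}+(r+\e)^{-1/2}\bigr)$. Integrating over $r\in(0,\e)$ yields $C\lambda(x)^{-2/q}\e^{1/2}$.

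Adding the two pieces proves (ii).

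\emph{Part (iii).} Treat the two summands separately.

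For the $D_r$ term, \eqref{t:eq:kernel-weighted} gives the bound $C\lambda(x)^{-2/q}r^{H+2\vartheta-1}$. The exponent exceeds $-1$, so integrating over $(0,\e^\beta)$ gives $C\lambda(x)^{-2/q}\e^{\beta(H+2\vartheta)}$.

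For the $D_{r+\e}$ term, apply \eqref{t:eq:kernel-weighted} with $r$ replaced by $r+\e\le T+1$. Again only the scaling argument is used, so the constant stays uniform. Integrating gives
\[
\int_0^{\e^\beta}(r+\e)^{H+2\vartheta-1}\,\dd r\le \int_0^{2\e^\beta}s^{H+2\vartheta-1}\,\dd s\le C\e^{\beta(H+2\vartheta)},
\]
where we used $\e\le\e^\beta$ for $\beta<1$ and $\e<1$.

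\emph{Main point to check.} The only delicate step is confirming that Lemma~\ref{t:lem:kernels} applies uniformly for arguments slightly larger than $T$. This is clear from its proof: the estimates \eqref{t:eq:kernel-L2} and \eqref{t:eq:kernel-weighted} reduce, via scaling and Lemma~\ref{lem:initial-smoothing}(i), to $r$-independent integrals. The factor $(1+\sqrt r|z|)^{a_q}$ is bounded by $C_T(1+|z|)^{a_q}$ for $r\le T+1$.
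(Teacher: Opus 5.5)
Your proposal is correct and follows the paper's proof essentially step for step: you integrate \eqref{t:eq:kernel-time-H} over $[\e^\beta,T]$ for (i), split at $r=\e$ using \eqref{t:eq:kernel-L2} and \eqref{t:eq:kernel-time-L2} for (ii), and integrate \eqref{t:eq:kernel-weighted} at $r$ and $r+\e$ using $\e\le\e^\beta$ for (iii). Your remark that the scaling bounds stay uniform for arguments up to $T+1$ makes explicit a point the paper leaves implicit.
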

\begin{proof}
For sufficiently small $\e>0$, we may assume that
$\e^\beta<T$.

For (i), integrating \eqref{t:eq:kernel-time-H} over
$r\in[\e^\beta,T]$ gives
$$
\begin{aligned}
&\int_{\e^\beta}^T\int_{\R^2}
|D_{r+\e}(x-y,h)-D_r(x-y,h)|^2
|h|^{2H-2}\lambda(y)^{-2/q}\dd y\dd h\dd r\\
\le&\,
C\lambda(x)^{-2/q}\e^2
\int_{\e^\beta}^T r^{H-3}\dd r\\
\le&\,
C\lambda(x)^{-2/q}\e^{2+\beta(H-2)}.
\end{aligned}
$$

For (ii), split the integral over $(0,\e)$ and $(\e,T)$.
On $(0,\e)$, using \eqref{t:eq:kernel-L2} and
$$
|G_{r+\e}-G_r|^2
\le
2|G_{r+\e}|^2+2|G_r|^2,
$$
we obtain
$$
\begin{aligned}
&\int_0^\e\int_\R
|G_{r+\e}(x-y)-G_r(x-y)|^2
\lambda(y)^{-2/q}\dd y\dd r\\
\le&\,
C\lambda(x)^{-2/q}
\int_0^\e
\bigl((r+\e)^{-1/2}+r^{-1/2}\bigr)\dd r\\
\le&\,
C\lambda(x)^{-2/q}\e^{1/2}.
\end{aligned}
$$
On $(\e,T)$, \eqref{t:eq:kernel-time-L2} gives
$$
\begin{aligned}
 \int_\e^T\int_\R
|G_{r+\e}(x-y)-G_r(x-y)|^2
\lambda(y)^{-2/q}\dd y\dd r 
\le&\,
C\lambda(x)^{-2/q}\e^2
\int_\e^T r^{-5/2}\dd r\\
\le&\,
C\lambda(x)^{-2/q}\e^{1/2}.
\end{aligned}
$$
This proves \eqref{t:eq:integrated-L2}.

For (iii), \eqref{t:eq:kernel-weighted} yields
$$
\begin{aligned}
&\int_0^{\e^\beta}\int_{\R^2}
\bigl(
|D_r(x-y,h)|^2+
|D_{r+\e}(x-y,h)|^2
\bigr)
|h|^{2H-2}|x-y|^{4\vartheta}
\lambda(y)^{-2/q}\dd y\dd h\dd r\\
\le&\,
C\lambda(x)^{-2/q}
\int_0^{\e^\beta}
\bigl(
r^{H+2\vartheta-1}
+(r+\e)^{H+2\vartheta-1}
\bigr)\dd r.
\end{aligned}
$$
Using $\e\le\e^\beta$ and $H+2\vartheta>0$, we obtain
$$
\int_0^{\e^\beta}
\bigl(
r^{H+2\vartheta-1}
+(r+\e)^{H+2\vartheta-1}
\bigr)\dd r
\le
C\e^{\beta(H+2\vartheta)}.
$$
Thus \eqref{t:eq:integrated-weighted} follows.
\end{proof}

 \subsection{Spatial fractional-difference estimates}
\label{app:spatial-kernels}

The following energy identity is implicit in the proof of
\cite[Theorem~1(b)]{WangZhang2021}. We record it here in terms of
the heat-kernel increment $D_r$.

\begin{lemma}\label{lem:energy}
For every $\e>0$,
\begin{equation}\label{eq:energy}
\int_0^\infty\int_{\R^2}
|D_r(z+h,\e)-D_r(z,\e)|^2|h|^{2H-2}
\dd h\dd z\dd r
=
\frac{\e^{2H}}{H(1-2H)}.
\end{equation}
\end{lemma}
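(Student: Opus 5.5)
The plan is to compute the left-hand side of \eqref{eq:energy} exactly via Plancherel in $z$, then evaluate the resulting $r$- and $\xi$-integrals in closed form.

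\emph{Step 1: Fourier transform in $z$.} For fixed $r>0$ and $\e$, the function $z\mapsto D_r(z,\e)=G_r(z+\e)-G_r(z)$ has Fourier transform $e^{-r\xi^2}(e^{i\e\xi}-1)$. Consequently, the function $z\mapsto D_r(z+h,\e)-D_r(z,\e)$ has transform $e^{-r\xi^2}(e^{i\e\xi}-1)(e^{ih\xi}-1)$. By Plancherel and Tonelli (all integrands are nonnegative),
$$
\int_{\R^2}|D_r(z+h,\e)-D_r(z,\e)|^2|h|^{2H-2}\dd h\dd z
=\frac1{2\pi}\int_\R e^{-2r\xi^2}|e^{i\e\xi}-1|^2
\left(\int_\R|e^{ih\xi}-1|^2|h|^{2H-2}\dd h\right)\dd\xi .
$$

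\emph{Step 2: the inner $h$-integral.} Substituting $h\mapsto h/|\xi|$ gives
$$
\int_\R|e^{ih\xi}-1|^2|h|^{2H-2}\dd h=\kappa_H|\xi|^{1-2H},
\qquad
\kappa_H:=\int_\R 2(1-\cos h)|h|^{2H-2}\dd h .
$$
This constant is finite precisely because $0<H<1/2$. Equivalently, it is the constant in \eqref{eq:inner}, namely $\kappa_H/(2\pi)\cdot H(\tfrac12-H)=c_{1,1}$.

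\emph{Step 3: the $r$-integral.} Integrating over $r\in(0,\infty)$ uses $\int_0^\infty e^{-2r\xi^2}\dd r=1/(2\xi^2)$. Combined with $|e^{i\e\xi}-1|^2=2(1-\cos\e\xi)$, this leaves
$$
\frac{\kappa_H}{2\pi}\int_\R (1-\cos\e\xi)|\xi|^{-1-2H}\dd\xi
=\frac{\kappa_H}{2\pi}\,\e^{2H}\int_\R(1-\cos\xi)|\xi|^{-1-2H}\dd\xi .
$$
This already yields the scaling $\e^{2H}$.

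\emph{Step 4: the constant.} The main obstacle is checking that the product of the two Fresnel-type integrals equals $2\pi/(H(1-2H))$. The two integrals are
$$
\int_\R(1-\cos\xi)|\xi|^{-1-2H}\dd\xi=\frac{\pi}{\Gamma(2H+1)\sin(\pi H)},
\qquad
\int_\R(1-\cos h)|h|^{2H-2}\dd h=\frac{\pi}{\Gamma(2-2H)\sin(\pi(1-2H)/2)} .
$$
Both follow from the standard formula $\int_0^\infty(1-\cos x)x^{-1-\alpha}\dd x=\Gamma(1-\alpha)\cos(\pi\alpha/2)/\alpha$ for $0<\alpha<2$. The product is then simplified using the reflection formula and $\sin(\pi H)\cos(\pi H)=\tfrac12\sin(2\pi H)$.

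As a cross-check, I would use the known identity $\E|U(t,x+\e)-U(t,x)|^2\to\e^{2H}/2$ as $t\to\infty$, coming from the fBm decomposition in Lemma~\ref{lem:completion}(i), together with the second form of \eqref{eq:inner}. This gives the value $\e^{2H}/2=H(\tfrac12-H)\cdot\text{LHS}$, i.e. $\text{LHS}=\e^{2H}/(H(1-2H))$, which confirms the constant without tracking Gamma functions.
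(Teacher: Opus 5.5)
Your proposal is correct and is essentially the paper's proof. The paper applies \eqref{eq:inner} to $\phi=\psi=D_r(\cdot,\e)$, which packages your Plancherel step and your identity $H(\tfrac12-H)\kappa_H/(2\pi)=c_{1,1}$; it then integrates $e^{-2r\xi^2}$ in $r$, rescales $u=\e\xi$, and needs only $\int_\R|e^{iu}-1|^2|u|^{-1-2H}\,\dd u=c_{1,1}^{-1}$, so your separate Gamma-function evaluation of $\kappa_H$ is correct but unnecessary (what you need is $\kappa_H\int_\R(1-\cos\xi)|\xi|^{-1-2H}\,\dd\xi=2\pi/(H(1-2H))$, while the product of the two integrals you actually display is half of that). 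Your cross-check via Lemma~\ref{lem:completion}(i) is not independent evidence: it tacitly uses that $S_t$ is independent of $U(t,\cdot)$ and that $\E|S_t(\e)|^2\to0$, neither of which the lemma as stated provides, and the normalization $1/\sqrt2$ there is itself derived from this very identity.
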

 \begin{proof}
By \eqref{eq:inner},
$$
\begin{aligned}
&\int_0^\infty\int_{\R^2}
|D_r(z+h,\e)-D_r(z,\e)|^2
|h|^{2H-2}\dd h\dd z\dd r\\
=&\,
\frac{c_{1,1}}{H(\frac12-H)}
\int_0^\infty\int_\R
|e^{i\e\xi}-1|^2e^{-2r\xi^2}
|\xi|^{1-2H}\dd\xi\dd r\\
=&\,
\frac{c_{1,1}}{2H(\frac12-H)}
\int_\R
|e^{i\e\xi}-1|^2|\xi|^{-1-2H}\dd\xi\\
=&\,
\frac{c_{1,1}\e^{2H}}{2H(\frac12-H)}
\int_\R
|e^{iu}-1|^2|u|^{-1-2H}\dd u\\
=&\,
\frac{\e^{2H}}{H(1-2H)}.
\end{aligned}
$$   Here we used the change of variables $u=\e\xi$ and the identity
$$
 \int_\R |e^{iu}-1|^2|u|^{-1-2H}\dd u
 =\frac{2\pi}{\Gamma(2H+1)\sin(\pi H)}
 =c_{1,1}^{-1}.
$$

The proof is complete. 
\end{proof} 

With $Q_H$ as defined in \eqref{eq:schwartz}, set
$$
F_H(w):=\int_0^\infty Q_H[D_s(\cdot,1)](w)\dd s.
$$

\begin{lemma}\label{lem:tails}
Let $0<\gamma<1-H$ and $R\ge1$. The following estimates hold.
\begin{itemize}

\item[(i)]
\begin{equation}\label{eq:localmoment}
\int_{|w|\le R}|w|^{2\gamma}F_H(w)\dd w
\le
C
\begin{cases}
1+R^{2\gamma+2H-1},
& 2\gamma+2H-1>0,\\
1+\log R,
& 2\gamma+2H-1=0,\\
1,
& 2\gamma+2H-1<0.
\end{cases}
\end{equation}

\item[(ii)]
For every $0\le a<1-2H$,
\begin{equation}\label{eq:weightedtail}
\int_{|w|>R}(1+|w|)^aF_H(w)\dd w
\le
CR^{a+2H-1}.
\end{equation}

\end{itemize}
\end{lemma}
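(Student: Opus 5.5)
The plan is to get pointwise bounds on $F_H(w)$ and then integrate them. Two bounds are needed: a local one near the origin and a decay bound at infinity. Specifically, I would aim to show
$$
F_H(w)\le C|w|^{2H-2}\ \ (0<|w|\le 2),\qquad F_H(w)\le C|w|^{2H-2}\ \ (|w|\ge 2),
$$
or at least that $F_H(w)\le C|w|^{2H-2}$ for all $w\neq0$, possibly with the local part replaced by an integrable singularity of the same order.

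\emph{Decay bound.} Scaling handles this. Since $D_s(z,h)=s^{-1/2}D_1(z/\sqrt s,h/\sqrt s)$, substituting $h=\sqrt s\,v$ gives
$$
Q_H[D_s(\cdot,1)](w)=s^{H-3/2}\,Q_H[D_1(\cdot,s^{-1/2})](w/\sqrt s).
$$
For large $s$ (namely $s\ge w^2$, or $s\ge1$), write $D_1(\cdot,\eta)=\int_0^\eta G_1'(\cdot+\theta)\dd\theta$ with $\eta=s^{-1/2}\le1$. Lemma~\ref{lem:schwartz} applied to $G_1'$, together with Minkowski's inequality, gives
$$
Q_H[D_1(\cdot,\eta)](z)\le C\eta^2(1+|z|)^{2H-2}.
$$
For small $s$, the kernel $D_s(\cdot,1)$ is a difference of two sharply concentrated Gaussians centred at $0$ and $-1$. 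Here I would bound $Q_H$ by $2Q_H[G_s(\cdot+1)]+2Q_H[G_s]$ and use $Q_H[G_s](w)=s^{H-3/2}Q_HG_1(w/\sqrt s)\le Cs^{H-3/2}(1+|w|/\sqrt s)^{2H-2}$, again from Lemma~\ref{lem:schwartz}. Integrating these bounds in $s$ and splitting at $s=w^2$ should produce
$$
F_H(w)\le C\bigl(|w|^{2H-2}+|w+1|^{2H-2}\bigr)
$$
near the origin, together with $|w|^{2H-2}$-type decay for $|w|\ge2$. The decay comes from the large-$s$ piece, which contributes $\int_{w^2}^\infty s^{H-3/2}s^{-1}\dd s\sim|w|^{2H-3}$, and from the small-$s$ piece, which contributes $\int_0^{w^2}s^{H-3/2}(|w|/\sqrt s)^{2H-2}\dd s\sim|w|^{2H-2}\cdot|w|^{0}$ up to constants. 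For the decay statement I would check this exponent bookkeeping carefully, possibly exploiting the cancellation $D$ for $s\ge1$ to gain an extra power.

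\emph{Integration.} Both parts then follow by elementary integration, assuming the pointwise bound
$$
F_H(w)\le C\bigl[\,|w|^{2H-2}+|w+1|^{2H-2}\,\bigr]\wedge C|w|^{2H-2}\quad\text{for }|w|\ge2.
$$
For part (i), the singularities at $0$ and $-1$ are integrable because $2H-2>-1$, and $|w|^{2\gamma}$ is bounded on $|w|\le2$. This leaves
$$
\int_2^R w^{2\gamma+2H-2}\dd w,
$$
which gives the three cases according to the sign of $2\gamma+2H-1$. For part (ii),
$$
\int_R^\infty w^{a+2H-2}\dd w\le CR^{a+2H-1}
$$
since $a<1-2H$, and $(1+|w|)^a\le C|w|^a$ for $|w|\ge1$. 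The region $1\le|w|\le 2$, which arises only when $R<2$, is handled by local integrability and $R\ge1$.

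\emph{Main obstacle.} The hardest step is obtaining the sharp tail decay $|w|^{2H-2}$ of $F_H$ uniformly over $s\in(0,\infty)$, especially the intermediate range $s\approx w^2$. There neither the small-$s$ Gaussian bound nor the large-$s$ derivative bound is obviously optimal. I would first try splitting the $s$-integral at $s=1$ and at $s=w^2$ and applying Lemma~\ref{lem:schwartz} to $G_1$ and to $G_1'$ in the respective ranges.
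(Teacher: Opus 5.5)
Your overall route matches the paper's: scaling, Lemma~\ref{lem:schwartz} applied to $G_1$ for small $s$ and to $G_1'$ (via $D_s(\cdot,1)=\int_0^1G_s'(\cdot+v)\,\dd v$) for large $s$, then integrating a pointwise tail bound. Two steps fail as written, however.

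\emph{Local integrability.} You say the singularities $|w|^{2H-2}$ and $|w+1|^{2H-2}$ are integrable because $2H-2>-1$. Since $H<1/2$, in fact $2H-2<-1$, so neither is integrable. Hence the bound $F_H(w)\le C(|w|^{2H-2}+|w+1|^{2H-2})$ gives no control of $\int_{|w|\le2}|w|^{2\gamma}F_H(w)\,\dd w$. Your alternative, $F_H(w)\le C|w|^{2H-2}$ on $0<|w|\le2$, is false. Already the contribution of $s\le|w+1|^2/16$, with $h$ restricted to $|w+1+h|\le\sqrt s$, shows $F_H(w)\ge c|w+1|^{2H-1}\to\infty$ as $w\to-1$. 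The missing ingredient is simply $F_H\in L^1(\R)$, which is Lemma~\ref{lem:energy} with $\e=1$. The paper uses it for the region $|w|\le2$ in (i), where $|w|^{2\gamma}\le4^\gamma$, and for $1\le R<2$ in (ii). So no local pointwise bound is needed at all.

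\emph{Tail exponent.} Your displayed bookkeeping uses the Gaussian bound on all of $0<s\le w^2$. That actually gives $|w|^{2H-2}\int_0^{w^2}s^{-1/2}\,\dd s=2|w|^{2H-1}$, not $|w|^{2H-2}$. This decay is too weak: the tail integral in (ii) would diverge because $a+2H-1>-1$, and (i) would only give $R^{2\gamma+2H}$. The Gaussian bound must be confined to $0<s\le1$. There, for $|w|>2$ (so that $|w+1|\ge|w|/2$), it gives $Q_H[D_s(\cdot,1)](w)\le C|w|^{2H-2}s^{-1/2}$. For $s\ge1$, the derivative bound $Q_H[D_s(\cdot,1)](w)\le Cs^{H-5/2}(1+|w|/\sqrt s)^{2H-2}$ gives $C|w|^{2H-2}\int_1^{w^2/4}s^{-3/2}\,\dd s+C\int_{w^2/4}^\infty s^{H-5/2}\,\dd s\le C|w|^{2H-2}$. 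This is exactly the split at $s=1$ and $s\approx w^2$ that you propose to try at the end, and it is the paper's argument. With it, and with $F_H\in L^1(\R)$ on the compact part, your integrations for (i) and (ii) go through.
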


\begin{proof}
Lemma~\ref{lem:energy} with \(\e=1\) yields
\begin{align}\label{eq F-integ}
     \int_\R F_H(w)\dd w<\infty.
\end{align}

We first show that
\begin{equation}\label{eq:FH-pointwise}
 F_H(w)\le C|w|^{2H-2},
 \qquad |w|>2.
\end{equation}
For \(0<s\le1\),
\[
 Q_H[D_s(\cdot,1)](w)
 \le 2Q_HG_s(w+1)+2Q_HG_s(w).
\]
By the scaling identity
\[
 Q_HG_s(w)
 =
 s^{H-3/2}Q_HG_1(w/\sqrt s)
\]
and Lemma~\ref{lem:schwartz}, we obtain, for \(|w|>2\),
\[
 Q_H[D_s(\cdot,1)](w)
 \le C|w|^{2H-2}s^{-1/2}.
\]
Consequently,
\[
 \int_0^1 Q_H[D_s(\cdot,1)](w)\dd s
 \le C|w|^{2H-2}.
\]

We also note the scaling relation
\begin{equation}\label{eq:heat-kernel-derivative-scaling}
G_s'(y)=s^{-1}G_1'(y/\sqrt{s}),
\qquad s>0,\ y\in\R.
\end{equation}
For $s\ge1$, we have
$$
D_s(\cdot,1)=\int_0^1G_s'(\cdot+v)\,\dd v.
$$
Jensen's inequality, \eqref{eq:heat-kernel-derivative-scaling},
and Lemma~\ref{lem:schwartz} therefore yield  
$$
Q_H[D_s(\cdot,1)](w)
\le
Cs^{H-5/2}
\left(1+\frac{|w|}{\sqrt{s}}\right)^{2H-2}.
$$
 
Hence, for \(|w|>2\),
\[
\begin{aligned}
\int_1^\infty Q_H[D_s(\cdot,1)](w)\dd s
\le &\, 
C|w|^{2H-2}
\int_1^{|w|^2/4}s^{-3/2}\dd s
+
C\int_{|w|^2/4}^\infty s^{H-5/2}\dd s\\
\le &\, 
C|w|^{2H-2}.
\end{aligned}
\]
This proves \eqref{eq:FH-pointwise}.

For (i),  \eqref{eq F-integ} and
\eqref{eq:FH-pointwise} imply, for \(R>2\),
\[
\begin{aligned}
\int_{|w|\le R}|w|^{2\gamma}F_H(w)\dd w
\le &\, 
C+
C\int_2^R r^{2\gamma+2H-2}\dd r.
\end{aligned}
\]
The three cases in \eqref{eq:localmoment}  follow by considering whether
\(2\gamma+2H-1\) is positive, zero, or negative.
The case \(1\le R\le2\) follows directly from  \eqref{eq F-integ}.

For (ii), when \(R\ge2\),   by \eqref{eq:FH-pointwise} and \(a<1-2H\),  we have 
\[
\begin{aligned}
\int_{|w|>R}(1+|w|)^aF_H(w)\dd w
\le &\, 
C\int_R^\infty r^{a+2H-2}\dd r\\
\le &\, 
CR^{a+2H-1}.
\end{aligned}
\] 

The case $1\le R<2$ follows from \eqref{eq F-integ}
and the estimate above with $R=2$, after enlarging $C$.
This proves \eqref{eq:weightedtail}.
\end{proof}

 \begin{lemma}\label{lem:weightedenergy}
Let $0\le a<1-2H$ and $0<\e\le1$. There exists a constant
$C>0$ such that
\begin{equation}\label{eq:wenergy}
\int_0^\infty\int_{\R^2}
|D_r(z+h,\e)-D_r(z,\e)|^2
|h|^{2H-2}(1+|z|)^a
\dd h\dd z\dd r
\le
C\e^{2H}.
\end{equation}
Moreover, for every $0<\gamma<1-H$,
\begin{equation}\label{eq:wlocalenergy}
\begin{aligned}
&\int_0^\infty\int_{\R^2}
|D_r(z+h,\e)-D_r(z,\e)|^2
|h|^{2H-2}(1+|z|)^a
(|z|^{2\gamma}\wedge1)
\dd h\dd z\dd r\\
\le&\,
C
\begin{cases}
\e^{2H+2\gamma}+\e,
& \gamma\ne\frac12-H,\\[1mm]
\e(1+|\log\e|),
& \gamma=\frac12-H.
\end{cases}
\end{aligned}
\end{equation}
\end{lemma}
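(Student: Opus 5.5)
The plan is to reduce both estimates to the function $F_H(w)=\int_0^\infty Q_H[D_s(\cdot,1)](w)\,\dd s$ by scaling, and then apply Lemma~\ref{lem:tails}. By the scaling identity $D_r(z,\e)=r^{-1/2}D_1(z/\sqrt r,\e/\sqrt r)$, or directly $D_r(z,\e)=\e^{-1}D_{r/\e^2}(z/\e,1)$, substitute $z=\e w$, $h=\e v$, $r=\e^2 s$. This gives
\[
\int_0^\infty\int_{\R^2}|D_r(z+h,\e)-D_r(z,\e)|^2|h|^{2H-2}\,\Phi(z)\,\dd h\dd z\dd r
=\e^{2H}\int_\R F_H(w)\,\Phi(\e w)\,\dd w
\]
for any nonnegative weight $\Phi$. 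The power of $\e$ comes from $\e^{-2}$ (the squared kernel), $\e^{2H-1}$ (from $|h|^{2H-2}\dd h$), $\e$ (from $\dd z$) and $\e^2$ (from $\dd r$); this is consistent with Lemma~\ref{lem:energy} when $\Phi\equiv1$.

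For \eqref{eq:wenergy}, take $\Phi(z)=(1+|z|)^a$. Since $\e\le1$, we have $(1+\e|w|)^a\le(1+|w|)^a$. It therefore suffices that $\int_\R(1+|w|)^aF_H(w)\,\dd w<\infty$. I split this at $|w|=1$. On $|w|\le1$ the bound follows from \eqref{eq F-integ}, and on $|w|>1$ it follows from \eqref{eq:weightedtail} with $R=1$, which requires $a<1-2H$.

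For \eqref{eq:wlocalenergy}, the weight becomes $(1+\e|w|)^a(\e^{2\gamma}|w|^{2\gamma}\wedge1)$. I split at $|w|=1/\e$.
\begin{itemize}
\item On $|w|\le1/\e$: here $(1+\e|w|)^a\le 2^a$, so the contribution is at most $C\e^{2H+2\gamma}\int_{|w|\le1/\e}|w|^{2\gamma}F_H(w)\,\dd w$. Applying \eqref{eq:localmoment} with $R=1/\e$ gives three cases:
\begin{itemize}
\item if $2\gamma+2H-1>0$, the bound is $\e^{2H+2\gamma}(1+\e^{1-2\gamma-2H})\le C(\e^{2H+2\gamma}+\e)$;
\item if $2\gamma+2H-1<0$, the bound is $\e^{2H+2\gamma}$;
\item if $\gamma=\tfrac12-H$, the bound is $\e(1+|\log\e|)$.
\end{itemize}
\item On $|w|>1/\e$: the weight is at most $(1+\e|w|)^a\le(1+|w|)^a$, so \eqref{eq:weightedtail} with $R=1/\e$ gives $\e^{2H}\cdot C\e^{1-a-2H}=C\e^{1-a}$.
\end{itemize}

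The main obstacle is this last tail piece. The bound $\e^{1-a}$ is worse than the claimed $\e$ when $a>0$, so the crude bound $(1+\e|w|)^a\le(1+|w|)^a$ is too lossy there. Instead I will keep the weight as $(1+\e|w|)^a\le 2^a(\e|w|)^a$ on $|w|>1/\e$, and use the pointwise bound \eqref{eq:FH-pointwise}, $F_H(w)\le C|w|^{2H-2}$ for $|w|>2$. This gives
\[
\e^{2H}\e^{a}\int_{|w|>1/\e}|w|^{a+2H-2}\,\dd w\le C\e^{2H+a}\e^{1-a-2H}=C\e,
\]
again using $a<1-2H$. For $\e$ near $1$ the region $1/\e<|w|\le2$ is not covered by \eqref{eq:FH-pointwise}; there I use \eqref{eq F-integ} and absorb the contribution into the constant.

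Combining the two regions yields \eqref{eq:wlocalenergy}. The same refinement gives \eqref{eq:wenergy} uniformly as well, since the part $|w|\le1/\e$ is bounded by $C\int_\R F_H<\infty$ and the tail is $O(\e^{1-2H})=O(1)$.
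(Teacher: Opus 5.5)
Your proposal is correct and follows the paper's proof essentially step for step: the same scaling identity reducing everything to $\e^{2H}\int_\R F_H(w)\Phi(\e w)\,\dd w$, the same split at $|w|=\e^{-1}$, and \eqref{eq:localmoment} with $R=\e^{-1}$ on the inner region. The only cosmetic difference is in the tail, where the paper writes $(1+\e|w|)^a\le C\e^a(1+|w|)^a$ and applies \eqref{eq:weightedtail} with $R=\e^{-1}$ directly; this spares the separate treatment of $1/\e<|w|\le2$ that your use of the pointwise bound \eqref{eq:FH-pointwise} requires.
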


  \begin{proof}
With the change of variables
$$
r=\e^2s,\qquad z=\e w,\qquad h=\e\ell,
$$
the scaling of $D_r$ gives
$$
D_{\e^2s}(\e w+\e\ell,\e)-D_{\e^2s}(\e w,\e)
=
\e^{-1}\bigl[D_s(w+\ell,1)-D_s(w,1)\bigr].
$$
Hence, by Fubini's theorem, for every nonnegative measurable function $f$,
\begin{equation}\label{eq:scaling}
\int_0^\infty\int_{\R^2}
|D_r(z+h,\e)-D_r(z,\e)|^2
|h|^{2H-2}f(z)\dd h\dd z\dd r
=
\e^{2H}\int_\R F_H(w)f(\e w)\dd w.
\end{equation}

Taking $f(z)=(1+|z|)^a$ in \eqref{eq:scaling} and using
$0<\e\le1$, we obtain
$$
\int_\R F_H(w)(1+\e|w|)^a\dd w
\le
\int_\R F_H(w)(1+|w|)^a\dd w.
$$
The integral on the right is finite: on $|w|\le1$ this follows from
$F_H\in L^1(\R)$, while on $|w|>1$ it follows from
\eqref{eq:weightedtail} with $R=1$. This proves
\eqref{eq:wenergy}.

For \eqref{eq:wlocalenergy}, take
$$
f(z)=(1+|z|)^a(|z|^{2\gamma}\wedge1)
$$
in \eqref{eq:scaling}, and write
$$
\e^{2H}\int_\R
F_H(w)(1+\e|w|)^a
(\e^{2\gamma}|w|^{2\gamma}\wedge1)\dd w
=:J_1+J_2,
$$
where $J_1$ and $J_2$ are the integrals over
$|w|\le\e^{-1}$ and $|w|>\e^{-1}$, respectively.

For $J_1$,
$$
J_1
\le
C\e^{2H+2\gamma}
\int_{|w|\le\e^{-1}}
|w|^{2\gamma}F_H(w)\dd w.
$$
Applying \eqref{eq:localmoment} with $R=\e^{-1}$ yields
$$
J_1\le
C
\begin{cases}
\e^{2H+2\gamma}+\e,
& 2\gamma+2H-1>0,\\
\e(1+|\log\e|),
& 2\gamma+2H-1=0,\\
\e^{2H+2\gamma},
& 2\gamma+2H-1<0.
\end{cases}
$$
 For $J_2$, since $|w|>\e^{-1}$,
$$
\e^{2\gamma}|w|^{2\gamma}\wedge1=1,
\qquad
(1+\e|w|)^a\le C\e^a(1+|w|)^a.
$$
Hence, by \eqref{eq:weightedtail},
$$
J_2
\le
C\e^{2H+a}
\int_{|w|>\e^{-1}}
(1+|w|)^aF_H(w)\dd w
\le
C\e.
$$

Combining the estimates for $J_1$ and $J_2$ proves
\eqref{eq:wlocalenergy}.
\end{proof}

\begin{lemma}\label{lem:singleenergy}
\begin{itemize}

\item[(i)]
Let $0\le a<1$ and $0<\e\le1$. Then
$$
\int_0^\infty\int_\R
|D_r(z,\e)|^2(1+|z|)^a
\dd z\dd r
\le
C\e.
$$

\item[(ii)]
Let $b\ge0$, $0<\gamma<1$, $0<\rho\le1$, and
$0<\e\le\sqrt{\rho}$. Then
$$
\int_\R
|D_\rho(z,\e)|(1+|z|)^b
(|z|^\gamma\wedge1)\dd z
\le
C\e\rho^{-(1-\gamma)/2}.
$$

\end{itemize}
\end{lemma}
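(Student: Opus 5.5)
For part (i) I will work in Fourier variables and use Plancherel, which avoids handling the weight directly. By Fubini,
\[
\int_0^\infty\int_\R |D_r(z,\e)|^2(1+|z|)^a\,\dd z\,\dd r
=\int_\R \Phi_\e(z)(1+|z|)^a\,\dd z,
\qquad \Phi_\e(z):=\int_0^\infty|D_r(z,\e)|^2\,\dd r .
\]
The unweighted integral (the case $a=0$) is computed exactly by Plancherel:
\[
\frac1{2\pi}\int_0^\infty\int_\R|e^{i\e\xi}-1|^2e^{-2r\xi^2}\,\dd\xi\,\dd r
=\frac1{4\pi}\int_\R|e^{i\e\xi}-1|^2\xi^{-2}\,\dd\xi=C\e .
\]
For the weight I will use the scaling $r=\e^2s$, $z=\e w$. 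Since $D_{\e^2s}(\e w,\e)=\e^{-1}D_s(w,1)$, we get $\Phi_\e(\e w)=\Phi_1(w)$, so
\[
\int_\R \Phi_\e(z)(1+|z|)^a\,\dd z=\e\int_\R\Phi_1(w)(1+\e|w|)^a\,\dd w\le \e\int_\R\Phi_1(w)(1+|w|)^a\,\dd w .
\]
It therefore suffices to show $\Phi_1(w)\le C|w|^{-2}$ for $|w|>2$, because then the last integral converges precisely when $a<1$.

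\emph{Main step: the pointwise tail of $\Phi_1$.}
\begin{itemize}
\item For $s\le1$, bound $|D_s(w,1)|^2\le 2G_s(w+1)^2+2G_s(w)^2$. With $|w|>2$ this gives $G_s(w)^2\le Cs^{-1}e^{-cw^2/s}$, and integrating over $s\in(0,1]$ yields a bound $Ce^{-cw^2}$.
\item For $s\ge1$, write $D_s(w,1)=\int_0^1G_s'(w+v)\,\dd v$ and use $|G_s'(y)|\le Cs^{-1}(1+|y|/\sqrt s)^{-2}$. This gives
\[
\int_1^\infty s^{-2}\bigl(1+|w|/\sqrt s\bigr)^{-4}\,\dd s .
\]
\item Split that integral at $s=w^2$. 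On $s<w^2$ the integrand is at most $s^{-2}\cdot s^2|w|^{-4}$, contributing $C|w|^{-2}$. On $s>w^2$ the integrand is at most $s^{-2}$, contributing $\le|w|^{-2}$.
\end{itemize}
Finally, $\Phi_1$ is integrable by the $a=0$ computation with $\e=1$. This handles the region $|w|\le2$ and closes part (i).

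For part (ii) I will split the $z$-integral at $|z|=1$. Both pieces use the mean-value bound
\[
|D_\rho(z,\e)|\le\e\sup_{|v|\le\e}|G_\rho'(z+v)| .
\]
\begin{itemize}
\item \emph{Region $|z|\le1$.} Here $\e\le\sqrt\rho$ and $|G_\rho'(y)|\le C\rho^{-1}\phi(y/\sqrt\rho)$ for a Gaussian-type $\phi$. Since $|v|\le\sqrt\rho$, the shift by $v$ is harmless: $\phi((z+v)/\sqrt\rho)\le C\tilde\phi(z/\sqrt\rho)$. The integral becomes $C\e\rho^{-1}\int|z|^\gamma\tilde\phi(z/\sqrt\rho)\,\dd z=C\e\rho^{-1}\rho^{(1+\gamma)/2}=C\e\rho^{-(1-\gamma)/2}$.
\item \emph{Region $|z|>1$.} Use the same bound with weight $(1+|z|)^b$. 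Because $\rho\le1$, the Gaussian factor $e^{-c z^2/\rho}$ dominates the polynomial weight, giving $\le C\e\rho^{-1}\cdot\sqrt\rho\,e^{-c/\rho}\le C\e$. Since $\rho\le1$ this is $\le C\e\rho^{-(1-\gamma)/2}$.
\end{itemize}

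The only delicate point is the tail decay $\Phi_1(w)\lesssim|w|^{-2}$ in part (i). It relies on using the derivative representation for large $s$ rather than the crude triangle bound, which only gives $s^{-1/2}$ and is not integrable in $s$.
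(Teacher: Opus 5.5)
Your proof is correct. It uses the same ingredients as the paper but organizes part (i) differently.

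\textbf{Part (i).} After the common scaling to $\e=1$, you integrate in time first. You get $\int_\R\Phi_1<\infty$ from Plancherel, and then prove the pointwise tail $\Phi_1(w)\le C|w|^{-2}$ for $|w|>2$. This is the analogue of what the paper does for $F_H$ in Lemma~\ref{lem:tails}. For the tail you use the triangle bound when $s\le1$, and $D_s(\cdot,1)=\int_0^1G_s'(\cdot+v)\,\dd v$ with a split at $s=w^2$ when $s\ge1$.

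The paper instead fixes $s$ and bounds the weighted spatial norm directly: $\int_\R|D_s(w,1)|^2(1+|w|)^a\,\dd w\le Cs^{-1/2}$ for $s\le1$, and $\le Cs^{(a-3)/2}$ for $s\ge1$, via Jensen and $G_s'(y)=s^{-1}G_1'(y/\sqrt s)$. So it needs neither Plancherel nor the $s=w^2$ split. Your route is a bit longer, but it exhibits $|w|^{-2}$ as the true spatial decay, which makes the threshold $a<1$ transparent. In the large-$s$ tail you tacitly use $|w+v|\ge|w|/2$ for $v\in[0,1]$ and $|w|>2$; that is harmless.

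\textbf{Part (ii).} You split at $|z|=1$ and use $\e\le\sqrt\rho$ to absorb the shift into a slightly wider Gaussian profile. You then dispose of the far region with $\rho^{-1/2}e^{-c/\rho}\le C$. The paper avoids the split by using $|z|^\gamma\wedge1\le|z|^\gamma\le|z+c\e|^\gamma+\e^\gamma$ on the whole line. It invokes $\e\le\sqrt\rho$ only at the end, via $\e^\gamma\rho^{-1/2}\le\rho^{-(1-\gamma)/2}$. Both versions rest on the mean-value theorem and the scaling of $G_\rho'$.
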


\begin{proof}
For (i), the scaling $r=\e^2s$ and $z=\e w$ gives
$$
\begin{aligned}
 \int_0^\infty\int_\R
|D_r(z,\e)|^2(1+|z|)^a\dd z\dd r
=&\,
\e\int_0^\infty\int_\R
|D_s(w,1)|^2(1+\e|w|)^a\dd w\dd s\\
\le&\,
\e\int_0^\infty\int_\R
|D_s(w,1)|^2(1+|w|)^a\dd w\dd s.
\end{aligned}
$$
It therefore suffices to show that
\begin{equation}\label{eq:single-energy-target}
\int_0^\infty\int_\R
|D_s(w,1)|^2(1+|w|)^a\dd w\dd s<\infty.
\end{equation}
For $0<s\le1$,
$$
|D_s(w,1)|^2
\le
2\bigl(G_s(w+1)^2+G_s(w)^2\bigr),
$$
and Gaussian decay yields
$$
\int_\R
|D_s(w,1)|^2(1+|w|)^a\dd w
\le
Cs^{-1/2}.
$$
For $s\ge1$, using Jensen's inequality 
and  \eqref{eq:heat-kernel-derivative-scaling}, we have
$$
\begin{aligned}
\int_\R|D_s(w,1)|^2(1+|w|)^a\dd w
\le&\,
\int_0^1\int_\R
|G_s'(w+c)|^2(1+|w|)^a\dd w\dd c\\
\le&\,
Cs^{(a-3)/2}
\int_\R|G_1'(z)|^2(1+|z|)^a\dd z\\
\le&\,
Cs^{(a-3)/2}.
\end{aligned}
$$
Since $a<1$, both bounds are integrable over their respective ranges,
which proves \eqref{eq:single-energy-target}.

For (ii), using $|z|^\gamma\wedge1\le|z|^\gamma$ and the
fundamental theorem of calculus, we obtain
$$
\begin{aligned}
\int_\R
|D_\rho(z,\e)|(1+|z|)^b(|z|^\gamma\wedge1)\dd z
\le&\,
\e\int_0^1\int_\R
|G_\rho'(z+c\e)|(1+|z|)^b|z|^\gamma\dd z\dd c.
\end{aligned}
$$
Since $\e\le\sqrt{\rho}\le1$ and $0\le c\le1$,
$$
(1+|z|)^b\le C(1+|z+c\e|)^b,
\qquad
|z|^\gamma\le|z+c\e|^\gamma+\e^\gamma.
$$
Using the change of  variables $y=z+c\e$ and 
\eqref{eq:heat-kernel-derivative-scaling}, we obtain
$$
\begin{aligned}
 \int_\R
|D_\rho(z,\e)|(1+|z|)^b(|z|^\gamma\wedge1)\dd z 
\le&\,
C\e\int_\R
|G_\rho'(y)|(1+|y|)^b
\bigl(|y|^\gamma+\e^\gamma\bigr)\dd y\\
\le&\,
C\e\left(
\rho^{(\gamma-1)/2}
+\e^\gamma\rho^{-1/2}
\right)\\
\le&\,
C\e\rho^{-(1-\gamma)/2}.
\end{aligned}
$$ 

The proof is complete. 
\end{proof}

\end{document}